\documentclass[a4paper,12pt]{article}

\pagestyle{headings}
\usepackage{amsmath,amsthm,amssymb,enumerate}
\usepackage{xcolor}
\usepackage{float}
\usepackage{multirow}
\usepackage{longtable}
\usepackage{diagbox}
\usepackage{soul}
\usepackage{cancel}
\usepackage{gensymb}
\usepackage{enumitem}
\usepackage[square,sort&compress,comma,numbers]{natbib}
\usepackage{bm}
\usepackage{hyperref}
\usepackage{subfig}
\usepackage{graphicx}
\usepackage[margin=1.7cm]{geometry}
\usepackage{tikz}
\usepackage{esint}
\usepackage{caption}
\usetikzlibrary{shapes,calc}
\usepackage{verbatim}
\usepackage{array}
\usepackage{bm}
\definecolor{maroon}{RGB}{144,0,32}
\newcolumntype{H}{>{\setbox0=\hbox\bgroup}c<{\egroup}@{}}

\usepackage{newtxtext,newtxmath}

\usepackage{multirow}
\usepackage{marginnote}
\chardef\bslash=`\\ 





\hfuzz1pc

\newtheorem{thm}{Theorem}[section]
\newtheorem{cor}[thm]{Corollary}
\newtheorem{lem}[thm]{Lemma}

\newtheorem{rem}[thm]{Remark}

\theoremstyle{definition}
\theoremstyle{remark}

\numberwithin{equation}{section}


\newcommand{\bI}{\boldsymbol{I}}

\newcommand{\cA}{\mathcal A}
\newcommand{\cK}{\mathcal K}

\newcommand{\cD}{\mathcal D}
\newcommand{\cE}{\mathcal E}

\newcommand{\cT}{\mathcal{T}}

\newcommand{\cN}{\mathcal N}

\newcommand{\cM}{{\mathcal M}}
\newcommand{\err}{{\rm err}}

\newcommand{\hc}{\widehat{c}}

\newcommand{\divc}{\mathrm{div}}
\newcommand{\rot}{\mathrm{rot\,}}
\newcommand{\dof}{\mathrm{dof}}

\newcommand{\fl}{\,\, \text{for all}\:}

\newcommand{\half}{\frac{1}{2}}

\newcommand{\dx}{{\rm\,dx}}
\newcommand{\ds}{{\rm\,ds}}

\newcommand{\Poincare}{Poincar\'e}
\newcommand{\Holder}{H\"{o}lder~}


%
{\algorithm}%
{\endalgorithm}

%


\hfuzz1pc 



\theoremstyle{definition}

\numberwithin{equation}{section}

\newcommand{\bV}{\text{\bf V}}

\newcommand{\bv}{\boldsymbol{v}}

 %
 %






\newcommand{\N}{\mathbb{N}}


\newcommand{\T}{\mathcal{T}}
\renewcommand{\P}{\mathcal{P}}




\newcommand{\bx}{{\boldsymbol{x}}}
\newcommand{\bn}{{\boldsymbol{n}}}
\newcommand{\bu}{{\boldsymbol{u}}}
\newcommand{\bp}{{\boldsymbol{p}}}
\newcommand{\bsf}{{\boldsymbol{f}}}
\newcommand{\bPi}{{\boldsymbol{\Pi}}}
\newcommand{\bpsi}{{\boldsymbol{\psi}}}

\def \R{{{\Bbb R}}}
\def \P{{{\mathcal P}}}

\allowdisplaybreaks

\def\R{\mathbb{R}}

\def\cA{\mathcal{A}}

\def\O{\Omega}

\def\bv{{\boldsymbol v}}

\def\pw{\rm {pw}}

\def\cE{{\mathcal{E}}}

\def\jump#1{\left[\hskip -3.5pt\left[#1\right]\hskip -3.5pt\right]}
\def\bchi{\boldsymbol{\chi}}

\def\bg{\boldsymbol \gamma}

\newcommand{\be}{\begin{equation}}
\newcommand{\ee}{\end{equation}}

\usepackage{mathtools}  
\mathtoolsset{showonlyrefs} 
\usepackage[normalem]{ulem}
\normalem
\definecolor{violet}{rgb}{0.580,0.,0.827}

\usepackage[normalem]{ulem}
\normalem
\newcounter{corr}
\definecolor{violet}{rgb}{0.580,0.,0.827}
\newcommand{\corr}[3]{\typeout{Warning : a correction remains in page
		\thepage}
	\stepcounter{corr}        
	{\color{red}\ifmmode\text{\,{\ensuremath{#1}}\,}\else{#1}\fi}
	{\color{blue}#2}
	{\color{violet} #3}}

\newcounter{changeto}
\newcommand{\changeto}[2]{\typeout{Warning : a correction remains in page
		\thepage}
	\stepcounter{changeto}        
	{\color{blue}\ifmmode\text{\,\sout{\ensuremath{#1}}\,}\else\sout{#1}\fi}
	{\color{red}#2}}

\parindent=0pt


\title{Nonconforming virtual element method for an incompressible miscible displacement problem in porous media}
\author{Sarvesh Kumar \footnote{Department of Mathematics, Indian Institute of Space Science and Technology, Thiruvanathapuram 695547, India. sarvesh@iist.ac.in} \quad Devika Shylaja \footnote{Department of Mathematics, Indian Institute of Space Science and Technology, Thiruvanathapuram 695547, India. devikas.pdf@iist.ac.in}}

\date{}
\begin{document}
	\maketitle
	
\abstract This article presents a priori error estimates of the miscible displacement of one incompressible fluid by another through a porous medium characterized by a coupled system of nonlinear elliptic and parabolic equations. The study utilizes the $H(\divc)$ conforming virtual element method (VEM) for the approximation of the velocity, while a non-conforming virtual element approach is employed for the concentration. The pressure is discretised using the standard piecewise discontinuous polynomial functions. These spatial discretization techniques are combined with a backward Euler difference scheme for time discretization. The article also includes numerical results that validate the theoretical estimates presented.


\medskip 
\noindent \textbf{Keywords:} Miscible fluid flow, coupled elliptic-parabolic problem, convergence analysis, virtual element methods

	\section{Introduction}

The {\textbf{miscible displacement}} of one incompressible fluid by another through a porous
medium is described by a time-dependent coupled system of nonlinear partial differential equations \cite{miscibledisplacement_1983,chavent_jaffre,peaceman-77}. In this process, two fluids that are capable of mixing evenly (miscible) displace each other within the interconnected void spaces of a porous medium, typically a rock formation. Let $\Omega\subset \R^2$ be a polygonal bounded, convex domain, describing a reservoir of unit thickness. Given a time interval $J := [0,T],$ for $T > 0$, the problem is to find the Darcy velocity $\bu=\bu(\bx,t)$ of the fluid mixture, the pressure $p = p(\bx,t)$ in the fluid mixture, and the concentration $c=c(\bx,t)$ of one of the components in the mixture, with $(\bx,t)=\Omega_T:=\Omega \times J$ such that
\begin{subequations}\label{eqn.model}
\begin{align}
&\displaystyle\phi\frac{\partial c}{\partial t}+\bu\cdot \nabla c -\divc(D(\bu)\nabla c)=q^{+}(\hc-c)\label{eqn.modela}\\
&\divc\,\bu=G\label{eqn.modelb}\\
&\bu=-a(c)(\nabla p-\bg(c)),\label{eqn.modelc}
\end{align}
\end{subequations}
where $\phi=\phi(\bx)$ is the porosity of the medium, $\bg(c)$ describes the force density due to gravity, and $a(c)=a(c,\bx)$ is the scalar-valued function given by
\[a(c):=\frac{k}{\mu(c)}.\]
Here, $k=k(\bx)$ represents the permeability of the porous rock, and $\mu(c)$ is the viscosity of the fluid mixture. Further, the non-negative injection and production source terms are $q^+=q^+(\bx,t)$ and $q^-=q^-(\bx,t)$ respectively, $\hc=\hc(\bx,t)$ is the concentration of the injected fluid, and
\begin{equation}\label{defn.G}
G:=q^+-q^-.
\end{equation}
Moreover, the diffusion dispersion tensor $D(\bu) \in \R^{2 \times 2}$ is given by
\begin{equation}\label{defn.D}
D(\bu):=\phi[d_m\bI +|u|\left(d_\ell E(\bu)+d_t(\bI-E(\bu))\right)]
\end{equation}
where $d_m$ is the molecular diffusion coefficient, $d_\ell$ (resp. $d_t$) is the longitudinal (resp. transversal) dispersion coefficient, $\bI$ is the identity matrix of order 2, and $E(\bu)$ is the tensor that projects onto $\bu$ direction which is given by, for $\bu=(u_1,u_2)$,
\begin{align*}
E(\bu)&:=\frac{\bu\bu^T}{|\bu|^2},\quad |\bu|^2=u_1^2+u_2^2.
\end{align*}

\medskip

\noindent Assume that no flow occurs across the boundary $\partial \Omega$, that is,
\begin{subequations}\label{eqn.bc}
\begin{align}
&\bu\cdot \bn=0 \quad \mbox{on }\partial \Omega \times J,\label{eqn.bc1}\\
&D(\bu)\nabla c\cdot \bn=0 \quad \mbox{ on } \partial \Omega \times J,\label{eqn.bc2}
\end{align}
\end{subequations}
where $\bn$ denotes the outward unit normal to the boundary $\partial \Omega$ and the initial condition
\begin{equation}\label{eqn.ic}
c(\bx,0)=c_0(\bx) \mbox{ in } \Omega,
\end{equation}
where $0 \le c_0(\bx) \le 1$ represents the initial concentration. A use of the divergence theorem for \eqref{eqn.modelb}, \eqref{defn.G}, and \eqref{eqn.bc1} shows the following compatibility conditions for $q^+$ and $q^-$:
\[\int_\Omega q^+(\bx,t)\dx=\int_{\Omega}q^-(\bx,t)\dx.\]
Since the pressure $p$ in \eqref{eqn.modelc} is only determined up to an additive constant, to ensure a unique solution, we make the assumption that
\[\int_\O p(\bx,t)\dx=0 \quad \fl t \in (0,T).\]

\smallskip

\noindent The study of miscible displacement is crucial for optimizing various industrial processes, such as enhanced oil recovery in the petroleum industry or contaminant transport in environmental remediation \cite{RussellWheeler,peaceman-77}. Understanding the underlying physics and employing accurate numerical simulations contribute to the development of efficient strategies for fluid displacement in porous media, thereby enhancing resource recovery and environmental management. The numerical methods to approximate the miscible displacement processes has been studied using finite difference methods in \cite{Douglas_1983,Peaceman_1966,peaceman-77}, finite element methods (FEMs) in \cite{miscibledisplacement_1983,Wheeler_1980, Wheeler_IP_1980, Wheeler_1982}, discontinuous Galerkin FEMs in \cite{BartelsJensenMuller_dG_2009,RiviereWalkington_2011,SunRiviereWheeler_2002,WangZhengYuGuoZhang_2019,GuoYuYang_2017}, finite volume methods in \cite{cha-07-por,ABEOM_2008,CHCKSMA_2013} and so on. 

\smallskip

\noindent {\textbf{Virtual element method}} (VEM) \cite{Veiga_basicVEM}, which is a generalization of the FEM,  has got more and more attention in recent years, because it can deal with the polygonal meshes and avoid an explicit construction of the discrete shape function, see \cite{Veiga_HdivHcurlVEM,Veiga_hitchhikersVEM,Brenner_errorVEM,VEM_general_2016,MixedVEM_general_2016,Cangiani_2017} and the references therein. The polytopal meshes can be very useful for a wide range of reasons, including meshing of the domain (such as cracks) and data features, automatic use of hanging nodes, adaptivity. Recently, a virtual element method for complex fluid flow problems \eqref{eqn.model}-\eqref{eqn.ic} has been investigated in \cite{Veiga_miscibledisplacement_2021}, where conforming VEM is analysed for the concentration and mixed VEM for the velocity-pressure equations. In contrast to this, the present work explores the numerical approximation of concentration using a nonconforming VEM \cite{ncVEM_2016,Cangiani_2017} (for any order of accuracy) and thus, introduces a novel perspective into the numerical analysis. Nonconforming methods typically impose fewer restrictions on the mesh topology. This can simplify the meshing process and reduce the effort required for mesh generation. Conforming methods, on the other hand, often demand a more regular mesh to satisfy certain continuity conditions. Nonconforming VEM allows discontinuities at element boundaries and this flexibility in the continuity conditions can be advantageous in handling irregular meshes and for solving reaction-dominated problems \cite[Section 9.1]{Cangiani_2017}. Additionally, an algebraic equivalence between the nonconforming VEM and a family of mimetic finite difference methods \cite{LipnikovManzini_mimetic} is established in \cite{ncVEM_2016}.

\smallskip

\noindent This paper employs the H(div) conforming VEM for approximation of the velocity, while the concentration is handled using a non-conforming virtual element approach. To discretize the pressure, standard piecewise discontinuous polynomial functions are used. These spatial discretizations are then combined with an uncomplicated time discretization using a backward Euler method, known for its computational efficiency. Optimal a priori error estimates are established for the concentration, pressure, and velocity in $L^2$ norm under regularity assumption on the exact solution. Numerical results are presented to justify the theoretical estimates.

\smallskip

\noindent The remaining parts are organised as follows. Section \ref{sec:wf} discusses the weak formulation of \eqref{eqn.model}-\eqref{eqn.ic}. The main result of this paper is stated at the end of this section. Section~\ref{sec:vem} deals with the virtual element method, semi-discrete and fully-discrete formulations. Error estimates are established in Section~\ref{sec:error}. Section~\ref{sec.numericalresults} provides the results of computational experiments that validate the theoretical estimates on both an ideal test case and a more realistic test case. The paper ends with an appendix, Section \ref{sec:appendix}, where we prove the error estimate for the concentration, stated in Theorem~\ref{thm.c}.

\smallskip

\noindent {\textbf{Notation.}} The standard $L^2$ inner product and norm on $L^2(\O)$ are denoted by $(\cdot,\cdot)$ and $\|{\cdot}\|$. The semi-norm and norm in $W^{k,p}(D)$,  for $D \subseteq \Omega$ and $1 \le p \le \infty$, are denoted by $|\bullet|_{k,p,D}$ and $\|\bullet\|_{k,p,D}$. For $p=2$,  the semi-norm and norm are denoted by $|\bullet|_{k,D}$ and $\|\bullet\|_{k,D}$. Let $\P_k(D)$ denotes the space of polynomials of degree at most $k$ ($k \in \N_0$) with the usual convention that $\P_{-1}(D)=0$.

\smallskip

 \noindent Let $H(\divc,\Omega) $ denotes the Sobolev space
\[H(\divc,\Omega):=\{\bv \in (L^2(\Omega))^2: \divc \bv \in L^2(\O)\}.\] Define the velocity space $\bV$, the pressure space $Q$, and the concentration space $Z$, equipped with the following norms by
\begin{align}\label{defn.spaces}
	&\bV:=\{\bv \in H(\divc,\Omega):\bv \cdot \bn=0 \mbox{ on }\partial \Omega\},\quad \|\bu\|_\bV^2:=\|\bu\|^2+\|\divc \bu\|^2\\
	&Q:=L^2_0(\Omega):=\{q \in L^2(\Omega):(q,1)=0\}, \quad \|q\|_Q^2:=\|q\|^2\\
	&Z:=H^1(\Omega), \quad \|z\|_Z^2:=\|z\|^2+\|\nabla z\|^2.
\end{align}
For $0 \le a \le b$,
\[\|\bv\|_{L^2(a,b;\bV)}^2:=\int_a^b\|\bv(t)\|_\bV^2 \dx, \quad \|\bv\|_{L^\infty(a,b;\bV)}:=\mbox{ess} \sup_{t \in [a,b]}\|\bv(t)\|_\bV.\]

\smallskip

\noindent For all $s>0$, define the broken Sobolev space as
\[H^s(\cT_h):=\{v \in L^2(\O);\,v_{|K} \in H^s(K) \fl K \in \cT_h\},\]
with the corresponding broken semi-norms and norms
\[|v|_{s,\cT_h}^2:=\sum_{K \in \cT_h}|v|_{s,K}^2, \quad \|v\|_{s,\cT_h}^2:=\sum_{K \in \cT_h}\|v\|_{s,K}^2.\]

\section{Weak formulation}\label{sec:wf}
This section deals with the weak formulation of the continuous problem \eqref{eqn.model}-\eqref{eqn.ic} and the properties of the associated bilinear forms.

\medskip


\noindent Assume that the functions $a$ and $\phi$ in \eqref{eqn.model} are positive and uniformly bounded from below and above, i.e, there exist positive constants $a_*$, $a^*$, $\phi_*$ and $\phi^*$, such that
\begin{equation}\label{eqn.boundaphi}
	a_* \le a(z,\bx) \le a^*, \qquad \phi_* \le \phi(\bx) \le \phi^*,
\end{equation}
for all $\bx \in \Omega$ and $z=z(t)$. In order to simplify the presentation, we define
\[A(z)(\bx):=a^{-1}(z,\bx).\]
Additionally, assume the realistic relation of the diffusion and dispersion coefficients given by $0 <d_m \le d_t\le d_\ell.$

\medskip

\noindent The weak formulation of \eqref{eqn.model}-\eqref{eqn.ic} seeeks $c \in L^2(0,T;Z)\cap C^0(0,T;L^2(\Omega))$, $\bu \in L^2(0,T;\bV)$, and $p \in L^2(0,T;Q)$, such that
\begin{subequations}\label{eqn.weak}
\begin{align}
\cM(\frac{\partial c(t)}{\partial t},z)+(\bu(t)\cdot \nabla c(t),z)+\cD(\bu(t);c(t),z)&=(q^+(\hc-c)(t),z), \fl z \in Z\label{eqn.weakz}\\
\cA(c(t);\bu(t),\bv)+B(\bv,p(t))&=(\bg(c(t)),\bv), \fl \bv \in \bV \label{eqn.weakv}\\
B(\bu(t),q)&=-(G(t),q), \fl q \in Q \label{eqn.weakq}
\end{align}
\end{subequations}
for almost all $t \in J$ and with initial condition $c(0)=c_0$, where
\begin{subequations}
\begin{align}
&\cM(c,z):=(\phi c,z), \quad  \cD(u;c,z):=(D(\bu)\nabla c, \nabla z), \,\;\label{defn.bilinear}\\
& \cA(c;\bu,\bv):=(A(c)\bu,\bv), \quad  B(\bv,q):=-(\divc v,q).\label{defn.bilinear_1}
\end{align}
\end{subequations}
Existence of weak solutions \eqref{eqn.weak} to \eqref{eqn.model}-\eqref{eqn.ic} have been established in \cite{Feng_1995} and \cite{Chen_1999}. For the sake of readability, here and throughout this paper, we write $\bu$ for $\bu(t)$ and for the other functions depending on space and time. The interpretation of whether $\bu$ represents a function of space only or a function of both space and time should be inferred from the surrounding context. 

\medskip

\noindent As in \cite{Veiga_miscibledisplacement_2021}, the following alternative form is used for \eqref{eqn.weakz} as this helps to preserve the properties of the continuous bilinear form after discretisation.
\begin{equation}
\cM(\frac{\partial c}{\partial t},z)+\Theta(\bu, c;z)+\cD(\bu;c,z)=(q^+\hc,z), \fl z \in Z,\label{eqn.weakz_alt}
\end{equation}
where
\[\Theta(\bu,c;z):=\half[(\bu\cdot\nabla c,z)+((q^++q^-)c,z)-(\bu,c\nabla z)].\]

\noindent The kernel is defined as
\begin{equation}\label{defn.kernel}
\cK:=\{\bv \in \bV:B(\bv,q)=0 \fl q \in Q\}.
\end{equation}

\begin{lem}[Properties of the bilinear forms]\label{lem.propertiescontinuous}The following properties hold for the bilinear forms in \eqref{defn.bilinear}-\eqref{defn.bilinear_1}\cite[Section 2.2]{Veiga_miscibledisplacement_2021}:
	\begin{itemize}
		\item[$(a)$] $\cM(c,z) \le \phi^*\|c\|\|z\|$ for all $c,z \in Z$,
		\item[(b)] $\cM(z,z) \ge \phi_*\|z\|^2$ for all $z \in Z$,
		\item[(c)] $\cA(c;\bu,\bv) \le \frac{1}{a_*}\|\bu\|\|\bv\|$ for all $c \in L^\infty(\Omega)$ and $\bu,\bv \in (L^2(\O))^2$,
		\item[(d)] $\cA(c;\bv,\bv) \le \|A(c)\|\|\bu\|_{0,\infty,\O}\|\bv\|$ for all $ c\in L^2(\O)$, $\bu \in (L^\infty(\O))^2$, and $\bv \in (L^2(\O))^2$,
		\item[(e)] $\cA(c;\bv,\bv) \ge \frac{1}{a^*}\|\bv\|^2$ for all $ c \in L^\infty(\O)$ and $\bv \in (L^2(\O))^2$,
		\item[(f)] $\cA(c,\bv,\bv) \ge \frac{1}{a^*}\|\bv\|_\bV^2$ for all $ c \in L^\infty(\O)$ and $\bv \in \cK$,
		\item[(g)] $\cD(\bu;c,z) \le \phi^*[d_m+\|u\|_{0,\infty,\O}(d_\ell+d_t)]\|\nabla c\|\|\nabla z\|$ for all $\bu \in (L^\infty(\O))^2$ and $c,z \in H^1(\O)$,
		\item[(h)] $\cD(\bu;c,z) \le\eta_\cD(1+\|u\|)\|\nabla c\|_{0,\infty,\O}\|\nabla z\|$ for all $\bu \in (L^2(\O))^2$ and $c,z \in H^1(\O)$ with $\nabla c \in L^\infty(\O)$,
		\item[(i)] $(D\bu,\boldsymbol{\mu},\boldsymbol{\mu}) \ge \phi_*(d_m\|\boldsymbol{\mu}\|^2+d_t\||u|^\half\boldsymbol{\mu}\|^2)$ for all $\boldsymbol{\mu} \in (L^2(\O))^2$,
	\end{itemize}
where $\eta_\cD$ is a positive constant depending only on $d_m$, $d_\ell$, and $d_t$.
\end{lem}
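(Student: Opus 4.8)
The plan is to derive each of the nine bounds directly from the definitions in \eqref{defn.bilinear}--\eqref{defn.bilinear_1}, the uniform coefficient bounds \eqref{eqn.boundaphi}, the Cauchy--Schwarz inequality, and the spectral structure of the dispersion tensor $D(\bu)$. The estimates split naturally into three groups: those for the mass form $\cM$, those for the Darcy form $\cA$, and those for the diffusion--dispersion form $\cD$.

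First I would dispose of the $\cM$-estimates $(a)$ and $(b)$. Writing $\cM(c,z)=\int_\Omega \phi c z\dx$ and using $\phi_* \le \phi \le \phi^*$ pointwise, $(a)$ follows by pulling $\phi^*$ out and applying Cauchy--Schwarz, while $(b)$ follows by bounding $\phi$ below by $\phi_*$ in $\cM(z,z)=\int_\Omega \phi z^2\dx$. The $\cA$-estimates $(c)$--$(f)$ are equally direct once we observe that \eqref{eqn.boundaphi} gives the pointwise two-sided bound $1/a^* \le A(c)=a^{-1}(c,\bx) \le 1/a_*$. Then $(c)$ and $(d)$ are Cauchy--Schwarz (in the $L^2$ and mixed $L^2$--$L^\infty$ pairings, respectively) combined with the boundedness of $A(c)$, and $(e)$ is the coercivity $\cA(c;\bv,\bv)=\int_\Omega A(c)|\bv|^2\dx \ge (1/a^*)\|\bv\|^2$ from the lower bound on $A(c)$. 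For $(f)$ the key extra ingredient is that every $\bv \in \cK$ satisfies $\divc\,\bv=0$: indeed $B(\bv,q)=-(\divc\,\bv,q)=0$ for all $q \in Q=L^2_0(\Omega)$ forces $\divc\,\bv$ to be constant, and the boundary condition $\bv\cdot\bn=0$ together with the divergence theorem makes that constant vanish; hence $\|\bv\|_\bV^2=\|\bv\|^2$ and $(f)$ reduces to $(e)$.

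The substantive work is in analysing $D(\bu)$ for $(g)$--$(i)$. The decisive observation is that $E(\bu)=\bu\bu^{T}/|\bu|^2$ is the orthogonal projection onto $\mathrm{span}\{\bu\}$, so $E(\bu)$ and $\bI-E(\bu)$ are complementary projections with eigenvalues in $\{0,1\}$. Consequently $d_\ell E(\bu)+d_t(\bI-E(\bu))$ is symmetric with eigenvalues $d_\ell$ and $d_t$, and from \eqref{defn.D} the matrix $D(\bu)$ is symmetric with eigenvalues $\phi(d_m+|u|d_\ell)$ and $\phi(d_m+|u|d_t)$. For the boundedness estimates $(g)$ and $(h)$ I would bound the operator norm crudely by the triangle inequality, $\|d_\ell E(\bu)+d_t(\bI-E(\bu))\| \le d_\ell+d_t$, so that $|D(\bu)\bzeta| \le \phi^*[d_m+|u|(d_\ell+d_t)]|\bzeta|$ pointwise; $(g)$ then follows by Cauchy--Schwarz after bounding $|u| \le \|u\|_{0,\infty,\Omega}$, and $(h)$ follows by pulling out $\|\nabla c\|_{0,\infty,\Omega}$ and estimating $\int_\Omega|u|\,|\nabla z| \le \|u\|\|\nabla z\|$ and $\int_\Omega |\nabla z| \le |\Omega|^{1/2}\|\nabla z\|$, with $\eta_\cD$ absorbing $\phi^*$, $d_m$, $d_\ell$, $d_t$, and $|\Omega|^{1/2}$. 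The coercivity $(i)$ uses the lower eigenvalue instead: pointwise $\boldsymbol{\mu}^{T}D(\bu)\boldsymbol{\mu} = \phi[d_m|\boldsymbol{\mu}|^2 + |u|(d_\ell\,\boldsymbol{\mu}^{T}E(\bu)\boldsymbol{\mu} + d_t\,\boldsymbol{\mu}^{T}(\bI-E(\bu))\boldsymbol{\mu})]$, and since $d_\ell \ge d_t$ and both quadratic forms are nonnegative we may replace $d_\ell$ by $d_t$ to obtain $\boldsymbol{\mu}^{T}D(\bu)\boldsymbol{\mu} \ge \phi(d_m+d_t|u|)|\boldsymbol{\mu}|^2$; integrating and using $\phi \ge \phi_*$ yields $(i)$.

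The only genuinely delicate point is keeping track of the two-sided spectral information on $D(\bu)$: the boundedness bounds $(g)$--$(h)$ want the largest eigenvalue (hence the coarse $d_\ell+d_t$ estimate), whereas the coercivity $(i)$ hinges on the smallest eigenvalue and therefore on the assumption $0<d_m\le d_t\le d_\ell$, which guarantees the dispersion part never erodes the $d_t|u|$ floor. Everything else is a routine application of Cauchy--Schwarz and the uniform bounds on $\phi$ and $a$; since these properties are quoted from \cite[Section~2.2]{Veiga_miscibledisplacement_2021}, I would present the $D(\bu)$ spectral computation in full and treat the remaining bounds briefly.
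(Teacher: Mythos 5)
Your proposal is correct, and in fact it supplies more than the paper does: the paper states this lemma without proof, quoting it directly from \cite[Section 2.2]{Veiga_miscibledisplacement_2021}, so there is no internal argument to compare against. Your verification is the natural one and all the key steps check out. In particular, the two genuinely non-routine ingredients are handled properly: for $(f)$, the observation that $\bv \in \cK$ forces $\divc\,\bv$ to be constant (orthogonality to $L^2_0(\Omega)$) and then zero (divergence theorem plus $\bv\cdot\bn=0$), so that $\|\bv\|_\bV=\|\bv\|$ and $(f)$ collapses to $(e)$; and for $(g)$--$(i)$, the spectral decomposition of $D(\bu)$ via the complementary orthogonal projections $E(\bu)$ and $\bI-E(\bu)$, with the largest eigenvalue (crudely bounded by $d_\ell+d_t$) driving the continuity bounds and the smallest eigenvalue $\phi(d_m+d_t|u|)$ --- available precisely because $d_t\le d_\ell$ --- driving the coercivity $(i)$. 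One point worth making explicit if you write this up: your own accounting shows that the constant $\eta_\cD$ in $(h)$ necessarily absorbs $\phi^*$ and $|\Omega|^{1/2}$ (from estimating $\int_\Omega|\nabla z|\le|\Omega|^{1/2}\|\nabla z\|$), so the paper's claim that $\eta_\cD$ depends \emph{only} on $d_m$, $d_\ell$, $d_t$ is imprecise as stated; your version is the accurate one. The only degenerate case left implicit is $\bu=0$, where $E(\bu)$ is undefined, but since it enters $D(\bu)$ multiplied by $|u|$ this causes no difficulty.
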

\subsection{Main result}
The main result of this paper is briefly presented below. 

\medskip

\noindent Let $(c,\bu,p)$ solves \eqref{eqn.weakz_alt}, \eqref{eqn.weakv}, and \eqref{eqn.weakq}, respectively. Let $0=t_0<t_1<\cdots<t_N=T$ be a given partition of $J=[0,T]$ with time step size $\tau$ and let $h$ be the mesh-size. For $k\ge 0$, let $\bu_h$ be the $H(\divc)$ conforming VEM approximation to $\bu$ of order $k$, $p_h$ be the polynomial approximation to $p$ of order $k$, and $c_h$ be the non-conforming VEM approximation to $c$ of order $k+1$. Then, under mesh assumption and the assumption that the continuous data and solution are sufficiently regular in space and time, there exists a positive constant $\eta$ independent of $h$ and $\tau$ such that, for $n=0,1,\cdots,N$,
\begin{align}
	\|c(t_n)-c_h(t_n)\|+\|\bu(t_n)-\bu_h(t_n)\|+\|p(t_n)-p_h(t_n)\|&\le \eta(\|c_{0,h}-c_0\|+h^{k+1}+\tau),\label{eqn.mainresult}
\end{align}
where $c_{0,h}$ is the interpolant of $c_0$.

\section{The virtual element method}\label{sec:vem}
This section presents the virtual element method for the weak formulation \eqref{eqn.weak}.

\smallskip

\noindent Let $\cT_h$ be a discretisation of $\O$ into polygons $K$. Let $\cE_h$ denote the set of all edges of $\cT_h$, and let $\cE_h^K$ be the set of all edges of $K \in \cT_h$. Let $h_K$ be the diameter of $K$ and mesh-size $h:=\max_{K \in \cT_h} h_K$. Let $ h_e$ be the length of the edge $e$, and $n_K$ be the number of edges of $K$. Assume that there exists a $\rho_0>0$ such that for all $h>0$ and for all $K \in \cT_h$:\\

\noindent (\textbf{D1}) $K$ is star-shaped with respect to a ball of radius $\rho \ge \rho_0h_K$,\\
(\textbf{D2}) $h_e \ge \rho_0h_K$ for all $e \in \cE_h^K$.

\medskip

\noindent Note that these two assumptions imply that the number of edges of each element is uniformly bounded. Additionally,
we will require following quasi-uniformity to prove Lemma~\ref{lem.Pinablainfty}:\\
(\textbf{D3}) for all $h>0$ and for all $K \in \cT_h$, it holds $h_K \ge \rho_1 h$, for some positive uniform constant $\rho_1$.

\subsection{Discrete spaces}
Let $K \in \cT_h$ and let $k \in \N_0$ be a given degree of accuracy. Then the local velocity virtual element space \cite{Veiga_HdivHcurlVEM,MixedVEM_general_2016} is defined by
\begin{align}\label{defn.Vhk}
\bV_h(K):&=\{\bv \in H(\divc;K)\cap H(\rot;K):\, \bv\cdot\bn_{|e} \in \P_k(e) \fl e \in \cE_h^K,\\
&\qquad \quad \divc \bv \in \P_k(K),\, \rot \bv \in \P_{k-1}(K)\}.
\end{align}
Obviously, $(\P_k(K))^2 \subseteq \bV_h(K)$. The degrees of freedom $\{\dof_j^{\bV_h(K)}\}_{j=1}^{\dim \bV_h(K)}$ on $\bV_h(K)$ are
\begin{enumerate}
	\item $\displaystyle \frac{1}{|e|}\int_e \bv \cdot \bn p_k \ds \fl p_k \in \P_k(e) \fl e \in \cE_h^K$\\
	\item $\displaystyle \frac{1}{\sqrt{|K|}}\int_K \divc \bv p_k \dx \fl p_k \in \P_k(K)\setminus \R$\\
	\item $\displaystyle \frac{1}{|K|}\int_K \bv \cdot \bx^\perp p_{k-1} \dx \fl p_{k-1} \in \P_{k-1}(K)$,	
\end{enumerate}
with $\bx^\perp:=(x_2,-x_1)^T$, where we assume the coordinates to be centered at the barycenter of the element.

\medskip

\noindent The local pressure virtual element space \cite{Veiga_HdivHcurlVEM,MixedVEM_general_2016} is
\begin{align}\label{defn.Vhk1}
	Q_h(K):&=\{q \in L^2(K):\, q \in \P_k(K)\}.
\end{align}
Observe that $\P_k(K) \subseteq Q_h(K)$. The degrees of freedom $\{\dof_j^{Q_h(K)}\}_{j=1}^{\dim Q_h(K)}$ on $Q_h(K)$ are
\begin{enumerate}
	\item $\displaystyle \frac{1}{{|K|}}\int_K q p_k \dx \fl p_k \in \P_k(K).$
\end{enumerate}

\medskip

\noindent These two spaces are coupled with the preliminary local concentration spaces \cite{ncVEM_2016}
\begin{align}\label{defn.ZhKtilde}
	\widetilde{Z}_h(K):&=\displaystyle \{z \in H^1(K):\,\frac{\partial z}{\partial n} \in \P_k(e) \fl e \in \cE_h^K, \Delta z \in \P_{k-1}(K)\}
\end{align}
It is clear from \eqref{defn.ZhKtilde} that $\P_{k+1}(K) \subseteq \widetilde{Z}_h(K)$. The degrees of freedom $\{\dof_j^{\widetilde{Z}_h(K)}\}_{j=1}^{\dim \widetilde{Z}_h(K)}$ on $\widetilde{Z}_h(K)$ is defined by
\begin{enumerate}
	\item $\displaystyle \frac{1}{|e|}\int_e zp_k \ds \fl p_k \in \P_k(e) \fl e \in \cE_h^K$\\
	\item $\displaystyle \frac{1}{{|K|}}\int_K zp_{k-1} \dx \fl p_{k-1} \in \P_{k-1}(K).$
\end{enumerate}

\noindent Note that $k=0$ provides the lowest order local VE spaces. Let $\bPi_k^{0,K}:(L^2(K))^2 \to (\P_k(K))^2$ be the $L^2$ projector onto the vector-valued polynomials of degree atmost $k$ in each component. That is, for a given $\bsf \in (L^2(\O))^2$,
\begin{equation}\label{defn.Pi}
(\bPi_k^{0,K}\bsf,\bp_k)=(\bsf,\bp_k) \fl \bp_k \in (\P_k(K))^2.
\end{equation}
This operator is computable for functions in $\bV_h(K)$ only by knowing their values at the degrees of freedom. Also, an integration by parts leads to, for $z_h \in \widetilde{Z}_h(K)$,
\[\int_K \bPi_k^{0,K}\nabla z_h \cdot \bp_k \dx=\int_K \nabla z_h \cdot \bp_k \dx=-\int_K z_h \divc \bp_k \dx +\int_{\partial K}z_h\bp_k\cdot \bn \ds,\]
for all $\bp_k \in (\P_k(K))^2$. The right-hand side is computable using the degrees of freedom of $\widetilde{Z}_h(K)$ and so is the left-hand side.

\smallskip

\noindent In addition to the $L^2$ projector described in \eqref{defn.Pi}, one needs the elliptic projector $\Pi_{k+1}^{\nabla,K}:H^1(K) \to \P_{k+1}(K)$, which is defined as follows:
\begin{subequations}\label{defn.Pielliptic}
\begin{align}
(\nabla \Pi_{k+1}^{\nabla,K}z, \nabla p_{k+1})&=(\nabla z,\nabla p_{k+1}) \fl p_{k+1} \in \P_{k+1}(K) \label{eqn.Pielliptic.a}\\
\frac{1}{|\partial K|}\int_{\partial K} \Pi_{k+1}^{\nabla,K}z&=\frac{1}{|\partial K|}\int_{\partial K} z \mbox{ for } k=0\label{eqn.Pielliptic.k0}\\
\frac{1}{|K|}\int_{ K} \Pi_{k+1}^{\nabla,K}z&=\frac{1}{| K|}\int_{ K} z \mbox{ for } k\ge 1\label{eqn.Pielliptic.k1}
\end{align}
\end{subequations}
for all $z \in H^1(K)$. Note that $\Pi_{k+1}^{\nabla,K}p_{k+1}=p_{k+1}$ for all $p_{k+1} \in \P_{k+1}(K)$. For any $z \in \widetilde{Z}_h(K)$, $\Pi_{k+1}^{\nabla,K}z$ can be computed using integration by parts and the degrees of freedom of $\widetilde{Z}_h(K)$.

\smallskip

\noindent Since the projections in the $L^2$ norm are available only on polynomials of degree $\le k-1$ directly from the degrees of freedom of $\widetilde{Z}_h(K)$, in order to compute the $L^2$ projections on $\P_{k+1}(K)$, we consider a modified virtual element space \cite{Equivalentprojectors_2013} for the concentration. Define
\begin{align}\label{defn.ZhK}
{Z}_h(K):&=\displaystyle \{z \in H^1(K):\,\frac{\partial z}{\partial n} \in \P_k(e) \fl e \in \cE_h^K, \Delta z \in P_{k+1}(K),\\
&\qquad  \int_K \Pi_{k+1}^{\nabla,K}z p_{k+1}\dx= \int_K z p_{k+1}\dx \fl p_{k+1} \in \P_{k+1}(K)\setminus\P_{k-1}(K)\},
\end{align}
where $\P_{k+1}(K)\setminus\P_{k-1}(K)$ is the space of polynomials in $\P_{k+1}(K)$ which are $L^2(K)$ orthogonal to $\P_{k-1}(K)$. It can be shown that the space $Z_h(K)$ has the same degrees of freedom and the same dimension as $\widetilde{Z}_h(K)$ \cite{Equivalentprojectors_2013,ncVEM_Stokes}.

\smallskip
\noindent In the sequel, the notation ``$a\lesssim b$ (resp. $ a \gtrsim b$)'' means that there exists a generic constant $C$ independent of the mesh parameter $h$ and time step size $\tau$ such that $a \le Cb$ (resp. $a \ge C b$). The approximation properties for the projectors are stated next \cite[Lemma 5.1]{VEM_general_2016}, \cite[Lemma 3.1]{Veiga_miscibledisplacement_2021}. Note that the last property for $\ell=1$ can be derived using \eqref{eqn.Pielliptic.a} and an introduction of $\Pi_{k}^{0,K}$. The case $\ell=0$ can be then proved with the help of \Poincare-Fredrich inequalities together with \eqref{eqn.Pielliptic.k0} and \eqref{eqn.Pielliptic.k1}. These estimates together with inverse inequality lead to the case $\ell>1$.

\begin{lem}[Approximation properties]\label{lem.approx}Given $K \in \cT_h$, let $\psi$ and $\bpsi$ be sufficiently smooth scalar and vector-valued functions, respectively. Then, it holds, for all $k \in \N_0$,
	\begin{align}
&(a)\,	\|\psi-\Pi_k^{0,K}\psi\|_{\ell,K} \lesssim h_K^{s-\ell} |\psi|_{s,K}, \quad 0 \le \ell \le s\le k+1\\
&(b)\,	\|\bpsi-\bPi_k^{0,K}\bpsi\|_{\ell,K} \lesssim h_K^{s-\ell} |\bpsi|_{s,K}, \quad 0 \le \ell \le s\le k+1\\
&(c)\,	\|\psi-\Pi_k^{\nabla,K}\psi\|_{\ell,K} \lesssim h_K^{s-\ell} |\psi|_{s,K}, \quad 0 \le \ell \le s\le k+1, \, s\ge 1.
	\end{align}
\end{lem}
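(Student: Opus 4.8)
The plan is to reduce all three estimates to the classical Bramble--Hilbert / Dupont--Scott polynomial approximation theory, which applies on each element $K$ with constants depending only on the shape-regularity parameter $\rho_0$ thanks to the star-shapedness assumption (\textbf{D1}). The two extra ingredients I would pair with it are the defining optimality (resp.\ orthogonality) of each projector and a scaled inverse inequality for polynomials of bounded degree, $\|p\|_{\ell,K}\lesssim h_K^{-\ell}\|p\|_{0,K}$, again with a constant controlled by $\rho_0$.

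For the scalar $L^2$ projector in $(a)$ I would first treat $\ell=0$: since \eqref{defn.Pi} makes $\Pi_k^{0,K}\psi$ the $L^2(K)$-best approximation of $\psi$ in $\P_k(K)$, we have $\|\psi-\Pi_k^{0,K}\psi\|_{0,K}\le\|\psi-q\|_{0,K}$ for every $q\in\P_k(K)$, and choosing $q$ to be an averaged Taylor polynomial and invoking Bramble--Hilbert yields $\|\psi-\Pi_k^{0,K}\psi\|_{0,K}\lesssim h_K^{s}|\psi|_{s,K}$. For $\ell\ge1$ I would split, with the same $q$,
\begin{equation*}
\|\psi-\Pi_k^{0,K}\psi\|_{\ell,K}\le\|\psi-q\|_{\ell,K}+\|\Pi_k^{0,K}\psi-q\|_{\ell,K},
\end{equation*}
estimating the first term directly by Bramble--Hilbert and the second---a polynomial---by the inverse inequality followed by $L^2$-optimality, $h_K^{-\ell}\|\Pi_k^{0,K}\psi-q\|_{0,K}\lesssim h_K^{-\ell}\|\psi-q\|_{0,K}\lesssim h_K^{s-\ell}|\psi|_{s,K}$. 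Part $(b)$ then follows at once by applying $(a)$ to each component of $\bpsi$, since $\bPi_k^{0,K}$ acts componentwise.

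For the elliptic projector in $(c)$ I would proceed in the order indicated in the text. At $\ell=1$, \eqref{eqn.Pielliptic.a} identifies $\nabla\Pi_{k+1}^{\nabla,K}\psi$ as the $L^2(K)$-orthogonal projection of $\nabla\psi$ onto the polynomial gradients $\{\nabla q:q\in\P_{k+1}(K)\}$; since $\Pi_k^{0,K}\psi\in\P_k(K)\subseteq\P_{k+1}(K)$ is an admissible competitor, the best-approximation property together with part $(a)$ gives
\begin{equation*}
|\psi-\Pi_{k+1}^{\nabla,K}\psi|_{1,K}\le|\psi-\Pi_k^{0,K}\psi|_{1,K}\lesssim h_K^{s-1}|\psi|_{s,K},
\end{equation*}
which is precisely the ``introduction of $\Pi_k^{0,K}$'' alluded to in the text. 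For $\ell=0$, where optimality is unavailable, I would use that \eqref{eqn.Pielliptic.k0}--\eqref{eqn.Pielliptic.k1} force $\psi-\Pi_{k+1}^{\nabla,K}\psi$ to have vanishing mean (vanishing boundary mean when $k=0$); a \Poincare--Friedrichs inequality on the star-shaped $K$, scaled to supply a factor $h_K$, then upgrades the $\ell=1$ bound via $\|\psi-\Pi_{k+1}^{\nabla,K}\psi\|_{0,K}\lesssim h_K\,|\psi-\Pi_{k+1}^{\nabla,K}\psi|_{1,K}\lesssim h_K^{s}|\psi|_{s,K}$. Finally $\ell>1$ proceeds as in $(a)$: subtract a polynomial approximant $q$, bound $\|\psi-q\|_{\ell,K}$ by Bramble--Hilbert, and reduce $\|\Pi_{k+1}^{\nabla,K}\psi-q\|_{\ell,K}$ to its $L^2$ norm by the inverse inequality before applying the $\ell=0$ estimate.

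The main obstacle is the $\ell=0$ case of the elliptic projector: unlike the $L^2$ projector it possesses no built-in $L^2$-optimality, so the whole estimate rests on correctly exploiting the normalisation conditions \eqref{eqn.Pielliptic.k0}--\eqref{eqn.Pielliptic.k1} to manufacture a zero-mean quantity and on a \Poincare--Friedrichs constant that is uniform in $h$. More broadly, the recurring technical point in every part is to keep all constants dependent only on $\rho_0$; this is exactly what the scaling to a reference configuration permitted by (\textbf{D1}) guarantees, and it is the reason the mesh assumptions are invoked.
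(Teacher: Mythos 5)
Your proposal is correct and follows essentially the same route as the paper: for part $(c)$ the paper likewise treats $\ell=1$ via the orthogonality of the elliptic projector with $\Pi_{k}^{0,K}\psi$ introduced as the competitor, treats $\ell=0$ via the normalisations \eqref{eqn.Pielliptic.k0}--\eqref{eqn.Pielliptic.k1} combined with the scaled Poincar\'e--Friedrichs inequalities, and treats $\ell\ge 2$ via a polynomial inverse estimate. The only cosmetic differences are that the paper simply cites the literature for $(a)$--$(b)$, where you supply the standard Bramble--Hilbert argument, and that the paper reduces the case $\ell\ge 2$ to the $\ell=1$ estimate rather than to the $\ell=0$ estimate; both reductions are valid.
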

\begin{proof}
	 For $(a)-(b)$, see \cite[Lemma 5.1]{VEM_general_2016}. Consider the case $\ell=1$ for $(c)$. The definition of $\Pi_k^{\nabla,K}$ in \eqref{eqn.Pielliptic.a}, an introduction of $\Pi_{k}^{0,K}$, and \Holder inequality show
	 \begin{align}
	 |\psi-\Pi_{k}^{\nabla,K}\psi|_{1,K}^2&=(\nabla(\psi-\Pi_{k}^{\nabla,K}\psi),\nabla(\psi-\Pi_{k}^{\nabla,K}\psi))\\&=(\nabla(\psi-\Pi_{k}^{\nabla,K}\psi),\nabla\psi)\\
	& =(\nabla(\psi-\Pi_{k}^{\nabla,K}\psi),\nabla\psi-\nabla \Pi_{k}^{0,K}\psi)\le |\psi-\Pi_{k}^{\nabla,K}\psi |_{1,K}| \psi-\Pi_{k}^{0,K}\psi|_{1,K}.
	 \end{align}
	 This and $(a)$ lead to the required estimate. For $\ell=0$, we consider the \Poincare-Friedrich inequality\cite{Brenner_PFI} given by, for all $\xi \in H^1(K)$, 
	 \begin{align}
	  h_K^{-1} \|\xi\|& \lesssim h_K^{-1}\left|\int_{\partial K}\xi \ds\right| +|\xi|_{H^1(K)} \label{PFI.a}\\
	  h_K^{-1} \|\xi\|& \lesssim h_K^{-1}\left|\int_{ K}\xi \dx\right| +|\xi|_{H^1(K)}.\label{PFI.b}
	 \end{align}
	 The estimate \eqref{PFI.a} and \eqref{eqn.Pielliptic.k0} (resp. \eqref{PFI.b} and \eqref{eqn.Pielliptic.k1}) together with the property $(c)$ for $\ell=1$ concludes the proof for $\ell=0$. The result for $\ell \ge 2$ follows from an introduction of $\Pi_{k}^{0,K}$, an inverse estimate \cite{DG_DA} for the polynomials (from $H^\ell(K)$ to $H^1(K)$), and the property $(c)$ for $\ell=1$.
\end{proof}

\noindent For every decomposition $\cT_h$ of $\Omega$ into simple polygons $K$, define the global spaces by
\begin{align*}
\bV_h:&=\{\bv \in \bV:\bv_{|K} \in \bV_h(K) \fl K \in \cT_h\}\\
Q_h:&=\{q \in Q:q_{|K} \in Q_h(K) \fl K \in \cT_h\}\\
Z_h:&=\{z \in H^{1,\rm{nc}}(\cT_h;k):z_{|K} \in Z_h(K) \fl K \in \cT_h\},
\end{align*}
where
\[H^{1,\rm{nc}}(\cT_h;k):=\{z \in H^1(\cT_h):\int_e \jump{z}\cdot\bn q\ds=0 \fl q \in P_k(e) \fl e \in \cE_h\}\]
Define, for all $\bu_h \in \bV_h$,
\[\|\bu_h\|_{\bV_h}^2:=\sum_{K \in \cT_h}\|\bu_h\|_{V,K}^2:=\sum_{K \in \cT_h}[\|\bu_h\|^2+\|\divc \bu_h\|^2].\]
Let the $\Pi_k^0$, $\bPi_k^0$, and $\Pi_{k+1}^\nabla$ be the global projectors such that for all $K \in \cT_h$,
\[\Pi^0_{{k}_{|K}}=\Pi_k^{0,K},\,\, \bPi^0_{{k}_{|K}}=\bPi_k^{0,K},\,\, \Pi^\nabla_{{k+1}_{|K}}=\Pi_{k+1}^{\nabla,K}.\]
The sets of global degrees of freedom $\{\dof_j^{\bV_h}\}_{j=1}^{\dim \bV_h}$,$\{\dof_j^{Q_h}\}_{j=1}^{\dim Q_h}$ and $\{\dof_j^{Z_h}\}_{j=1}^{\dim Z_h}$ are achieved by linking together their corresponding local counterparts.

\subsection{Semi-discrete formulation}\label{sec.semidiscrete}
This section deals with the semi-discrete weak formulation of \eqref{eqn.weak} which is continuous in time and discrete in space. Let $\Pi_0^{0,K}:L^2(K)\to \P_0(K)$ and $\bPi_0^{0,K}:(L^2(K))^2\to (\P_0(K))^2$ be the $L^2$ projectors onto the scalar and vector valued functions.

\noindent The semi-discrete variational formulation of \eqref{eqn.weak} seeks $\bu_h \in \bV_h$, $p_h \in Q_h$, and $c_h \in Z_h$ such that, for almost every $t \in J$,
\begin{subequations}\label{eqn.semidiscrete}
\begin{align}
\cM_h(\frac{\partial c_h}{\partial t},z_h)+\Theta_h(\bu_h,c_h;z_h)+\cD_h(\bu_h;c_h,z_h)&=(q^+\hc,z_h), \fl z_h \in Z_h\label{eqn.semidiscreteqz}\\
\cA_h(c_h;\bu_h,\bv_h)+B(\bv_h,p_h)&=(\bg(c_h),\bv_h)_h, \fl \bv_h \in \bV_h \label{eqn.semidiscreteqv}\\
B(\bu_h,q_h)&=-(G,q_h), \fl q_h \in Q_h\label{eqn.semidiscreteqq}
\end{align}
\end{subequations}
with the initial condition 
\[c_h(0)=c_{0,h}:=I_hc_0,\]
where $I_hc_0$ is the interpolant of $c_0$ in $Z_h$. Here,
\begin{enumerate}
	\item The term $\cM_h(\bullet,\bullet)$ in \eqref{eqn.semidiscreteqz} is defined by
\begin{align}
\cM_h\left(\frac{\partial c_h}{\partial t},z_h\right)&:=\sum_{K\in \cT_h} \cM_h^K\left(\frac{\partial c_h}{\partial t},z_h\right), \label{defn.Mh}
\end{align}
where 
\begin{align}
\cM_h^K\left(c_h,z_h\right):=&\int_K \phi (\Pi_{k+1}^{0,K}c_h)(\Pi_{k+1}^{0,K}z_h)\dx+ \nu_\cM^K(\phi) S_\cM^K((I-\Pi_{k+1}^{0,K})c_h,(I-\Pi_{k+1}^{0,K})z_h)
\end{align}
with $S_\cM^K(\bullet,\bullet)$ denotes the stabilization term with property given below in \eqref{eqn.SM} and 
\[\nu_M^K(\phi)=|\Pi_{0}^{0,K}\phi|.\]
	\item The term $\Theta_h(\bullet,\bullet;\bullet)$ in \eqref{eqn.semidiscreteqz} is defined by
\begin{align}
\Theta_h(\bu_h,c_h;z_h)&:=\half[(\bu_h\cdot\nabla c_h,z_h)_h+((q^++q^-)c_h,z_h)_h-(\bu_h,c_h\nabla z_h)_h],\label{eqn.Thetah}
\end{align}
where 
\begin{align*}
(\bu_h\cdot\nabla c_h,z_h)_h&=\sum_{K \in \cT_h}\int_K \bPi_{k}^{0,K}\bu_h\cdot \bPi_{k}^{0,K}(\nabla c_h)\Pi_{k+1}^{0,K}z_h \dx\\
((q^++q^-)c_h,z_h)_h&=\sum_{K \in \cT_h}\int_K(q^++q^-) \Pi_{k+1}^{0,K}c_h\Pi_{k+1}^{0,K}z_h \dx\\
(\bu_h,c_h\nabla z_h)_h&=\sum_{K \in \cT_h}\int_K \bPi_{k}^{0,K}\bu_h \Pi_{k+1}^{0,K} c_h\cdot\bPi_{k}^{0,K}(\nabla z_h) \dx.
\end{align*}
\item The term $\cD_h(\bullet;\bullet,\bullet)$ in \eqref{eqn.semidiscreteqz} is defined by
\begin{align}
\cD_h\left(\bu_h,c_h;z_h\right)&:=\sum_{K\in \cT_h} \cD_h^K\left(\bu_h,c_h;z_h\right),
\end{align}
where 
\begin{align}
\cD_h^K\left(\bu_h,c_h;z_h\right):=&\int_K D(\bPi_{k}^{0,K}\bu_h) \bPi_{k}^{0,K}(\nabla c_h)\cdot\bPi_{k}^{0,K}(\nabla z_h)\dx\\
&\qquad + \nu_\cD^K(\bu_h)S_\cD^K((I-\Pi_{k+1}^{\nabla,K})c_h,(I-\Pi_{k+1}^{\nabla,K})z_h)
\end{align}
with $S_\cD^K(\bullet,\bullet)$ denotes the stabilization term with property given below in \eqref{eqn.SD} and 
\[\nu_D^K(\bu_h)=\nu_\cM^K(\phi)(d_m+d_t|\bPi_{0}^{0,K}\bu_h|).\]
\item The term $(q^+\hc,\bullet)$  in \eqref{eqn.semidiscreteqz} is defined by
\begin{equation}
(q^+\hc,z_h)_h:=\sum_{K\in \T_h}\int_K q^+\hc\Pi_{k+1}^{0,K}z_h \dx.
\end{equation}
\item The term $\cA_h(\bullet;\bullet,\bullet)$ in \eqref{eqn.semidiscreteqv} is defined by
\begin{align}
\cA_h\left(c_h;\bu_h,\bv_h\right)&:=\sum_{K\in \cT_h} \cA_h^K\left(c_h;,\bu_h,\bv_h\right),
\end{align}
where 
\begin{align}
\cA_h^K\left(c_h;\bu_h,\bv_h\right):=&\int_K A(\Pi_{k+1}^{0,K}c_h) \bPi_{k}^{0,K}(\bu_h)\cdot\bPi_{k}^{0,K}(\bv_h)\dx\\
&\qquad + \nu_\cA^K(c_h) S_\cA^K((I-\bPi_{k}^{0,K})\bu_h,(I-\bPi_{k}^{0,K})\bv_h)
\end{align}
with $S_\cA^K(\bullet,\bullet)$ denotes the stabilization term with property given below in \eqref{eqn.SA} and 
\[\nu_A^K(c_h)=|A(\Pi_{0}^{0,K}c_h)|.\]
\item The term $(\bg(\bullet),\bullet)_h$ in \eqref{eqn.semidiscreteqv} is defined by
\begin{equation}
(\bg(c_h),\bv_h)_h:=\sum_{K \in \T_h}\int_K \bg(\Pi_{k+1}^{0,K}c_h)\cdot \bPi_{k}^{0,K}\bv_h\dx.
\end{equation}
\end{enumerate}
The stabilisation terms $S_\cM:Z_h\times Z_h\to \R$, $S_\cD:Z_h\times Z_h \to \R$, and $S_\cA:\bV_h \times \bV_h \to \R$ are symmetric and positive definite bilinear forms with the property that for all $K \in \cT_h$, there exists positive constants $M_0^\cM,M_1^\cM,M_0^\cD,M_1^\cD,M_0^\cA,$ and $M_1^\cA$ independent of $h$ and $K$ such that
\begin{subequations}
\begin{align}
&M_0^\cM \|z_h\|_{0,K}^2 \le S_\cM^K(z_h,z_h) \le M_1^\cM \|z_h\|_{0,K}^2 \quad \fl z_h \in Z_h \cap \ker(\Pi_{k+1}^{0,K})\label{eqn.SM}\\
&M_0^\cD \|\nabla z_h\|_{0,K}^2 \le S_\cD^K(z_h,z_h) \le M_1^\cD \|\nabla z_h\|_{0,K}^2 \quad \fl z_h \in Z_h \cap \ker(\Pi_{k+1}^{\nabla,K})\label{eqn.SD}\\
&M_0^\cA \|\bv_h\|_{0,K}^2 \le S_\cA^K(\bv_h,\bv_h) \le M_1^\cA \|\bv_h\|_{0,K}^2 \quad \fl \bv_h \in \bV_h \cap \ker(\bPi_{k}^{0,K}).\label{eqn.SA}
\end{align}
\end{subequations}
The above properties prove the continuity of $\cM_h^K(\bullet,\bullet)$, $\cD_h^K(\bullet,\bullet;\bullet)$, and $\cA_h^K(\bullet,\bullet;\bullet)$ respectively. Under the mesh assumption (\textbf{D1})-(\textbf{D2}), a simplier choice of these stabilisation terms are given by
\begin{align*}
S_\cM^K(c_h,z_h)&=|K|\sum_{j=1}^{\dim Z_h(K)}\dof_j^{Z_h(K)}(c_h) \dof_j^{Z_h(K)} (z_h)\\
S_\cD^K(c_h,z_h)&=\sum_{j=1}^{\dim Z_h(K)}\dof_j^{Z_h(K)}(c_h) \dof_j^{Z_h(K)} (z_h)\\
S_\cA^K(\bu_h,\bv_h)&=|K|\sum_{j=1}^{\dim \bV_h(K)}\dof_j^{\bV_h(K)}(\bu_h) \dof_j^{\bV_h(K)} (\bv_h).
\end{align*}

\noindent The lemma stated below shows the continuity and coercivity properties of the discrete bilinear form in \eqref{eqn.semidiscrete}.The proof follows analogous to \cite[Lemma 3.2]{Veiga_miscibledisplacement_2021} with \cite[Lemma 3.1.c]{Veiga_miscibledisplacement_2021} replaced with Lemma~\ref{lem.approx}.c and hence is skipped.
\begin{lem}[Properties of the discrete bilinear forms]\label{lem.propertiesdiscrete} The following properties hold for the discrete bilinear forms in \eqref{eqn.semidiscrete}
	\begin{itemize}
		\item[$(a)$] $\cM_h(c_h,z_h) \lesssim \|c_h\|\|z_h\|$ for all $c_h,z_h\in Z_h$,
		\item[(b)] $\cM_h(z_h,z_h) \gtrsim\|z_h\|^2$ for all $z_h \in Z_h$,
			\item[(c)] $\cD_h(\bu_h;c_h,z_h) \lesssim | c_h|_{1,\cT_h}|z_h|_{1,\cT_h}$ for all $\bu_h \in \bV_h$ and $c_h,z_h \in Z_h$,
		\item[(d)] $\cD_h(\bu_h;z_h,z_h) \gtrsim |z_h|_{1,\cT_h}^2$ for all $\bu_h \in \bV_h$ and $z_h \in Z_h$,
	\item[(e)] $\cA_h(c_h;\bu_h,\bv_h) \lesssim \|\bu_h\|\|\bv_h\|$ for all $c_h \in Z_h$ and $\bu_h,\bv_h \in \bV_h$,
		\item[(f)] $\cA_h(c_h;\bv_h,\bv_h) \gtrsim \|\bv_h\|^2$ for all $c_h \in Z_h$ and $\bv_h \in \bV_h$.
	\end{itemize}
Thus, $\cA_h(c_h,\bullet,\bullet)$ is coercive on the kernel
\begin{equation}\label{defn.kerneldiscrete}
\cK_h:=\{\bv_h \in \bV_h:B(\bv_h,q_h)=0 \fl q_h \in Q_h\} \subset \cK.
\end{equation}
with respect to $\|\bullet\|_{\bV_h}$, where $\cK$ is given in \eqref{defn.kernel}.
\end{lem}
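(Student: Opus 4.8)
The plan is to exploit the common two-part structure shared by $\cM_h^K$, $\cD_h^K$, and $\cA_h^K$: on each element each form splits as a \emph{consistency} term — an integral of the relevant coefficient against polynomial projections of the arguments — plus a \emph{stabilization} term weighted by the cell factors $\nu_\cM^K(\phi)$, $\nu_\cD^K(\bu_h)$, $\nu_\cA^K(c_h)$. These factors are bounded below by positive constants depending only on $\phi_*,a_*,a^*,d_m$ and bounded above in terms of $\phi^*,a_*^{-1},d_m,d_t$ (the $\cD$-factor additionally involving $|\bPi_0^{0,K}\bu_h|$), all traceable to \eqref{eqn.boundaphi}. The whole lemma then reduces to treating the two parts separately on each $K$ and summing over $\cT_h$.

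For the continuity bounds (a), (c), (e) I would estimate the consistency term by Cauchy--Schwarz together with the pointwise boundedness of the coefficients ($0<\phi\le\phi^*$, $A=a^{-1}\le a_*^{-1}$, and the tensor bound of Lemma~\ref{lem.propertiescontinuous}(g) for $D$), using the $L^2$-stability of $\Pi_{k+1}^{0,K}$ and $\bPi_k^{0,K}$ to replace projected quantities by the originals. The stabilization term is controlled by its upper spectral bounds $M_1^\cM$, $M_1^\cD$, $M_1^\cA$ in \eqref{eqn.SM}--\eqref{eqn.SA} and the upper bounds on $\nu_\bullet^K$. Summing over $K$ gives (a), (c), (e); in (c) the hidden constant is permitted to depend on $\bu_h$ through $D(\bPi_k^{0,K}\bu_h)$ and $\nu_\cD^K(\bu_h)$, which are tamed after an $L^\infty$/inverse estimate on the polynomial $\bPi_k^{0,K}\bu_h$.

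For the coercivity bounds (b), (d), (f) I would bound the consistency term from below using the coercivity of the continuous forms — $\phi\ge\phi_*$ for $\cM$, the tensor lower bound of Lemma~\ref{lem.propertiescontinuous}(i), which yields $\ge\phi_* d_m\|\boldsymbol{\mu}\|^2$ for $\cD$, and $A\ge(a^*)^{-1}$ for $\cA$ — applied to the projected arguments, while the stabilization term contributes a positive multiple of the squared defect $\|(I-\Pi_{k+1}^{0,K})z_h\|^2$, $|(I-\Pi_{k+1}^{\nabla,K})z_h|_{1,K}^2$, or $\|(I-\bPi_k^{0,K})\bv_h\|^2$ via the lower spectral bounds $M_0^\bullet$ and the lower bounds on $\nu_\bullet^K$. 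The desired lower bound in the full $L^2$ or $H^1$-seminorm is then recovered by the Pythagorean orthogonal decomposition associated with each projector ($L^2$-orthogonality for $\Pi_{k+1}^{0,K}$ and $\bPi_k^{0,K}$, and the $H^1$-orthogonality \eqref{eqn.Pielliptic.a} for $\Pi_{k+1}^{\nabla,K}$), after which summation over $\cT_h$ gives (b), (d), (f).

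I expect the main obstacle to be the coercivity of $\cD_h$, part (d), because of a mismatch of projectors: the consistency term is built from the $L^2$-projected gradient $\bPi_k^{0,K}(\nabla z_h)$, whereas the stabilization penalizes the elliptic-projection defect $(I-\Pi_{k+1}^{\nabla,K})z_h$. To close the estimate one must show that $\|\bPi_k^{0,K}(\nabla z_h)\|_{0,K}$ and the stabilization jointly control $|z_h|_{1,K}$, which rests on $\bPi_k^{0,K}\nabla$ and $\nabla\Pi_{k+1}^{\nabla,K}$ being comparable; this follows from the definition of the elliptic projector together with the approximation properties of Lemma~\ref{lem.approx}. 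Finally, the coercivity on the kernel $\cK_h$ is immediate once (f) holds: for $\bv_h\in\cK_h$, choosing $q_h=\divc\bv_h$ (legitimate since $\divc\bv_h$ is a degree-$k$ polynomial on each $K$ with zero mean, hence $\divc\bv_h\in Q_h$) in $B(\bv_h,q_h)=0$ forces $\divc\bv_h=0$, so $\|\bv_h\|_{\bV_h}=\|\bv_h\|$ and (f) upgrades to coercivity in the $\bV_h$-norm.
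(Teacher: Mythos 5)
Your proposal is sound and in fact supplies the argument that the paper leaves to a citation: the paper skips this proof entirely, referring to the analogous lemma in the conforming-VEM reference, and your consistency-plus-stabilization split with Pythagorean recovery of the full norms is precisely the standard route taken there. Two refinements are worth recording. First, the projector mismatch you identify in (d) has a cleaner resolution than an appeal to the approximation properties of Lemma~\ref{lem.approx}: those are asymptotic $h$-power estimates for smooth functions and are not the right tool for an exact inequality between discrete quantities. Instead, note that by \eqref{eqn.Pielliptic.a} the function $\nabla\Pi_{k+1}^{\nabla,K}z_h$ belongs to $\nabla\P_{k+1}(K)\subseteq(\P_k(K))^2$, while $\bPi_k^{0,K}\nabla z_h$ is the $L^2(K)$-orthogonal projection of $\nabla z_h$ onto the larger space $(\P_k(K))^2$; the best-approximation property of that projection therefore gives the exact bound $\|(I-\bPi_k^{0,K})\nabla z_h\|_{0,K}\le |(I-\Pi_{k+1}^{\nabla,K})z_h|_{1,K}$, and combining it with the identity $|z_h|_{1,K}^2=\|\bPi_k^{0,K}\nabla z_h\|_{0,K}^2+\|(I-\bPi_k^{0,K})\nabla z_h\|_{0,K}^2$ closes (d) with no further input. (For (b), (d), (f) alike you should also record that the stabilization bounds \eqref{eqn.SM}--\eqref{eqn.SA} are applicable to the defects $(I-\Pi_{k+1}^{0,K})z_h$, $(I-\Pi_{k+1}^{\nabla,K})z_h$, $(I-\bPi_k^{0,K})\bv_h$ because $\P_{k+1}(K)\subseteq Z_h(K)$ and $(\P_k(K))^2\subseteq\bV_h(K)$, so these defects lie in the discrete spaces intersected with the respective kernels.) Second, in the kernel argument the mean-zero condition that places $\divc\bv_h$ in $Q_h$ is global rather than elementwise: $\int_\O\divc\bv_h\dx=\int_{\partial\O}\bv_h\cdot\bn\ds=0$ by the divergence theorem and the boundary condition built into $\bV$. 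With that precision your choice $q_h=\divc\bv_h$ indeed forces $\divc\bv_h=0$, so $\|\bv_h\|_{\bV_h}=\|\bv_h\|$ on $\cK_h$ and (f) upgrades to coercivity in the $\bV_h$-norm; the same identity also yields the inclusion $\cK_h\subset\cK$ asserted in \eqref{defn.kerneldiscrete}.
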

\noindent Well-posedness of \eqref{eqn.semidiscrete} can be established using the tools of \cite{ncVEM_parabolic} which deals with the nonconforming virtual element method for parabolic problems and \cite{mixedVEM_Brezzi} for the mixed virtual element method together with \ref{lem.propertiesdiscrete}. More precisely, as in \cite{miscibledisplacement_1983}, for a given $c_h(t) \in L^\infty(\O)$, there exists a unique solution $(\bu_h(c_h),p_h(c_h))$ for \eqref{eqn.semidiscreteqv}-\eqref{eqn.semidiscreteqq}. A subtitution of this solution in \eqref{eqn.semidiscreteqz} leads to a system of non-linear differential equation in $c_h$. Picards theorem and an a priori bound for $c_h$ then show the existence and uniqueness of the discrete concentration $c_h(t)$ for all $t \in J$.
\subsection{Fully discrete formulation}
\noindent A semi-discrete formulation of \eqref{eqn.weak} is presented in Section~\ref{sec.semidiscrete}. This section deals with the fully discrete formulation which is discrete in both space and time. The temporal discretization is achieved through the utilization of a backward Euler method.

\smallskip

\noindent Let $0=t_0<t_1<\cdots<t_N=T$ be a given partition of $J=[0,T]$ with time step size $\tau$. That is, $t_n=n\tau$, $n=0,1,\cdots,N$. For a generic function $f(t)$, define $f^n:=f(t_n)$, $n=0,1,\cdots,N$. Also, define
\[\bu^n:=\bu(t_n),\, p^n:=p(t_n),\, c^n=c(t_n)\]
and
\[\bu_h^n:=\bu_h(t_n),\, p_h^n:=p_h(t_n),\, c_h^n=c_h(t_n).\]
At $t_n$ for $n=0,1,\cdots,N$ with $c_h^0=c_{0,h}$, the fully discrete formulation corresponding to the velocity-pressure equation seeks $(\bu_h^n,p_h^n) \in \bV_h \times Q_h$ such that
\begin{subequations}\label{eqn.fullydiscrete_uhph}
	\begin{align}
	\cA_h(c_{h}^n;\bu_h^n,\bv_h)+B(\bv_h,p_h^n)&=(\bg(c_h^n),\bv_h)_h, \fl \bv_h \in \bV_h \label{eqn.fullydiscreteqv}\\
	B(\bu_h^n,q_h)&=-(G^n,q_h), \fl q_h \in Q_h.\label{eqn.fullydiscreteqq}
	\end{align}
\end{subequations}
Once $(\bu_h^n,p_h^n)$ is solved, the approximation to concentration at time $t=t_{n+1}$ can obtained with the help of  $(\bu_h^n,p_h^n)$ and the Euler scheme for the time derivative $\frac{\partial c_h^{n+1}}{\partial t}$ given by
\[\frac{\partial c_h^{n+1}}{\partial t}_{|t=t_{n+1}}\approx \frac{c_h^{n+1}-c_h^n}{\tau}.\]
 The fully discrete formulation corresponding to the concentration equation seeks $c_h^{n+1}\in Z_h$ such that
\begin{align}
	\cM_h(\frac{ c_h^{n+1}-c_h^n}{\tau},z_h)+\Theta_h(\bu_h^n,c_h^{n+1};,z_h)+\cD_h(\bu_h^n;c_h^{n+1},z_h)&=(q^{+(n+1)}\hc^{(n+1)},z_h)_h, \fl z_h \in Z_h.\label{eqn.fullydiscreteq_zh}
	\end{align}
	 Note that \eqref{eqn.fullydiscrete_uhph} and \eqref{eqn.fullydiscreteq_zh} are decoupled from each other and hence represent system of linear equations eventhough the original problem is a nonlinear coupled system problem for concentration, pressure, and velocity.
\smallskip

\noindent	It can be established analogous to that of \cite[Lemma 3.3]{Veiga_miscibledisplacement_2021} that under the assumption $G^n,q^{+n},p_h^n,c_h^n \in L^\infty(\O)$, $\bg(c_h^n) \in (L^2(\O))^2$, and $\bu_h^n \in (L^\infty(\O))^2$ for all $n=0,1,\cdots,N$, for a given $\tau>0$, \eqref{eqn.fullydiscrete_uhph}-\eqref{eqn.fullydiscreteq_zh} is uniquely solvable.
 
 \section{Error estimates}\label{sec:error}
  This section is devoted to the convergence analysis of the scheme.
  
  \begin{lem}[Auxiliary result]\cite[Lemma 4.1]{Veiga_miscibledisplacement_2021}\label{lem.aux}
  	Let $r,s,t \in \N_0$. Let $\Pi_r^0$ and $\bPi_s^0$ denote the elementwise $L^2$ projectors onto scalar and vector valued polynomials of degree at most $r$ and $s$ respectively. Given a scalar function $\sigma \in H^{m_r}(\cT_h)$, $0 \le m_r \le r+1$, let $\kappa(\sigma)$ be a tensor valued piecewise Lipschitz continuous with respect to $\sigma$. Further, let $\widehat{\sigma} \in L^2(\O)$ and let $\bchi$ and $\bpsi$ be vector valued functions. Assume that $\kappa(\sigma) \in (L^\infty(\O))^{2 \times 2}$, $\bchi \in (H^{m_s}(\cT_h)\cap L^\infty(\O))^2$, $\bpsi \in (L^2(\O))^2$ and $\kappa(\sigma)\bchi \in (H^{m_t}(\cT_h))^2$, for some $0 \le m_s\le s+1$ and $0 \le m_t \le t+1.$ Then, for any $K \in \cT_h$,
  	\begin{align*}
  	(\kappa(\sigma)\bchi,\bpsi)_{0,K}-(\kappa(\Pi_r^{0,K}\widehat{\sigma})\bPi_s^{0,K}\bchi,\bPi_t^{0,K}\bpsi)_{0,K}
  &	 \le \eta \big[h^{m_t}|\kappa(\sigma)\bchi|_{{m_t},K}+h^{m_s}|\bchi|_{{m_s},K}\|\kappa(\sigma)\|_{0,\infty,K}\\
  	 &\quad+(h^{m_r}|\sigma|_{{m_r},K} +\|\sigma-\widehat{\sigma}\|_{0,K})\|\bchi\|_{0,\infty,K}\big]\|\bpsi\|_{0,K}.
  	\end{align*}
  	Consequently,
  		\begin{align*}
  	(\kappa(\sigma)\bchi,\bpsi)-(\kappa(\Pi_r^0\widehat{\sigma})\bPi_s^0\bchi,\bPi_t^0\bpsi)
  	&\le \eta \big[h^{m_t}|\kappa(\sigma)\bchi|_{{m_t},\T_h}+h^{m_s}|\bchi|_{{m_s},\T_h}\|\kappa(\sigma)\|_{0,\infty,\O}\\
  	&\quad+(h^{m_r}|\sigma|_{{m_r},\T_h}+\|\sigma-\widehat{\sigma}\|)\|\bchi\|_{0,\infty,\O}\big]\|\bpsi\|.
  	\end{align*}
  \end{lem}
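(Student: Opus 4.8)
The plan is to prove the local estimate first and then sum over $K\in\cT_h$ for the global one. Fix $K\in\cT_h$ and abbreviate $\kappa:=\kappa(\sigma)$ and $\widehat\kappa:=\kappa(\Pi_r^{0,K}\widehat\sigma)$. I would insert the two intermediate quantities $(\kappa\bchi,\bPi_t^{0,K}\bpsi)_{0,K}$ and $(\kappa\bPi_s^{0,K}\bchi,\bPi_t^{0,K}\bpsi)_{0,K}$ to split the difference as $T=T_1+T_2+T_3$, where
\begin{align*}
T_1&:=(\kappa\bchi,\bpsi-\bPi_t^{0,K}\bpsi)_{0,K},\\
T_2&:=(\kappa(\bchi-\bPi_s^{0,K}\bchi),\bPi_t^{0,K}\bpsi)_{0,K},\\
T_3&:=((\kappa-\widehat\kappa)\bPi_s^{0,K}\bchi,\bPi_t^{0,K}\bpsi)_{0,K}.
\end{align*}
Each term isolates exactly one of the three projections appearing in the statement, so the three summands will produce the three terms on the right-hand side.

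For $T_1$, the key observation is that $\bpsi-\bPi_t^{0,K}\bpsi$ is $L^2(K)$-orthogonal to $(\P_t(K))^2$; hence I may subtract $\bPi_t^{0,K}(\kappa\bchi)$ inside the first slot for free and write $T_1=(\kappa\bchi-\bPi_t^{0,K}(\kappa\bchi),\bpsi-\bPi_t^{0,K}\bpsi)_{0,K}$. Cauchy--Schwarz together with the $L^2$-contractivity $\|\bpsi-\bPi_t^{0,K}\bpsi\|_{0,K}\le\|\bpsi\|_{0,K}$ and Lemma~\ref{lem.approx}(b) applied to $\kappa\bchi\in(H^{m_t}(\cT_h))^2$ then yields $|T_1|\lesssim h^{m_t}|\kappa(\sigma)\bchi|_{m_t,K}\|\bpsi\|_{0,K}$. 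For $T_2$ I would pull out $\|\kappa\|_{0,\infty,K}$ by H\"older, bound $\|\bPi_t^{0,K}\bpsi\|_{0,K}\le\|\bpsi\|_{0,K}$, and use Lemma~\ref{lem.approx}(b) on $\bchi-\bPi_s^{0,K}\bchi$ to get $|T_2|\lesssim h^{m_s}|\bchi|_{m_s,K}\|\kappa(\sigma)\|_{0,\infty,K}\|\bpsi\|_{0,K}$.

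The delicate term is $T_3$, which carries the dependence on $\sigma-\widehat\sigma$. Using the piecewise Lipschitz continuity of $\kappa$ I bound $|\kappa-\widehat\kappa|\lesssim|\sigma-\Pi_r^{0,K}\widehat\sigma|$ pointwise, then apply H\"older to extract $\|\bPi_s^{0,K}\bchi\|_{0,\infty,K}$ and $\|\bPi_t^{0,K}\bpsi\|_{0,K}\le\|\bpsi\|_{0,K}$, leaving $\|\sigma-\Pi_r^{0,K}\widehat\sigma\|_{0,K}$. The latter I split by the triangle inequality as $\|\sigma-\Pi_r^{0,K}\sigma\|_{0,K}+\|\Pi_r^{0,K}(\sigma-\widehat\sigma)\|_{0,K}$ and estimate by Lemma~\ref{lem.approx}(a) and $L^2$-contractivity to obtain $h^{m_r}|\sigma|_{m_r,K}+\|\sigma-\widehat\sigma\|_{0,K}$. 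The main obstacle here is replacing $\|\bPi_s^{0,K}\bchi\|_{0,\infty,K}$ by the required $\|\bchi\|_{0,\infty,K}$: this rests on the $L^\infty$-stability of the $L^2$ projection on a star-shaped element, which holds under the mesh regularity (\textbf{D1})--(\textbf{D2}) via a scaling and finite-dimensional norm-equivalence argument and is the only place where shape-regularity is genuinely invoked. With that in hand, $|T_3|\lesssim(h^{m_r}|\sigma|_{m_r,K}+\|\sigma-\widehat\sigma\|_{0,K})\|\bchi\|_{0,\infty,K}\|\bpsi\|_{0,K}$.

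Adding the three bounds gives the local estimate. For the global one I sum over $K\in\cT_h$ and apply the discrete Cauchy--Schwarz inequality to each of the three products, using $\big(\sum_{K}\|\bpsi\|_{0,K}^2\big)^{1/2}=\|\bpsi\|$ and bounding the elementwise $L^\infty$ norms by the global ones; this converts $\sum_{K}h^{m_t}|\kappa(\sigma)\bchi|_{m_t,K}\|\bpsi\|_{0,K}$ and its two companions into the stated global seminorms $h^{m_t}|\kappa(\sigma)\bchi|_{m_t,\T_h}$, $h^{m_s}|\bchi|_{m_s,\T_h}\|\kappa(\sigma)\|_{0,\infty,\O}$, and $(h^{m_r}|\sigma|_{m_r,\T_h}+\|\sigma-\widehat\sigma\|)\|\bchi\|_{0,\infty,\O}$, each multiplied by $\|\bpsi\|$, completing the proof.
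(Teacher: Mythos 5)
Your proposal is correct, and it follows essentially the standard argument: the paper itself states this lemma without proof, importing it from \cite[Lemma 4.1]{Veiga_miscibledisplacement_2021}, where the proof proceeds by the same telescoping decomposition (projection error in $\bpsi$ via the orthogonality trick, projection error in $\bchi$ via H\"older and Lemma~\ref{lem.approx}, and the Lipschitz bound on $\kappa(\sigma)-\kappa(\Pi_r^{0,K}\widehat\sigma)$ combined with the $L^\infty$-stability of the $L^2$ projection, which is exactly the inverse-type estimate the present paper also quotes as \cite[(41)]{Veiga_miscibledisplacement_2021} in the proof of Lemma~\ref{lem.projectionPc}). Your handling of the only delicate point, namely replacing $\|\bPi_s^{0,K}\bchi\|_{0,\infty,K}$ by $\|\bchi\|_{0,\infty,K}$ under the shape-regularity assumptions (\textbf{D1})--(\textbf{D2}), and the final summation with discrete Cauchy--Schwarz, are both sound.
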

\noindent Consider the mixed problem
\begin{subequations}\label{eqn.uhph.error}
  \begin{align}
  \cA_h(c_{h}^n;\bu_h^n,\bv_h)+B(\bv_h,p_h^n)&=(\bg(c_h^n),\bv_h)_h, \fl \bv_h \in \bV_h \label{eqn.errorv}\\
  B(\bu_h^n,q_h)&=-(G^n,q_h), \fl q_h \in Q_h,\label{eqn.errorq}
  \end{align}
\end{subequations}
where $c_h^n \in Z_h$ is the numerical solution of the concentration equation \eqref{eqn.fullydiscreteq_zh} for $n=1,\cdots,N$ and $c_h^0=c_{0,h}$.

\smallskip

\noindent The error bounds for velocity and pressure are stated below which follows from \cite[Theorem 1]{Veiga_miscibledisplacement_2021} and is therefore skipped.
\begin{thm}[Error for velocity and pressure]\label{thm.up}
	Given $c_h^n \in Z_h$, let $(\bu_h^n,p_h^n) \in \bV_h \times Q_h$ be the solution to \eqref{eqn.uhph.error}. Assume that $\bu^n \in (H^{k+1}(\cT_h))^2,$ $p^n \in H^{k+1}(\cT_h)$, and $c^n \in H^{k+1}(\cT_h)$ where $(\bu^n,p^n,c^n)$ solves \eqref{eqn.weak} at time $t_n$ and $k \in \N_0$. Furthermore, assume that $\bg(c)$ and $A(c)$ are piecewise Lipschitz continuous functions with respect to $c \in L^2(\O)$, and $\bg(c^n)$, $A(c^n)\bu^n \in (H^{k+1}(\cT_h))^2$. Then
	\begin{align*}
	\|\bu^n-\bu_h^n\|& \lesssim h^{k+1}+\|c^n-c_h^n\|\\
	\|p^n-p_h^n\|& \lesssim h^{k+1}+\|c^n-c_h^n\|.
	\end{align*}
\end{thm}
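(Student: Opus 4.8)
The plan is to bound the velocity error by a standard mixed/saddle-point argument and then obtain the pressure error via the inf-sup condition. Since $(\bu^n, p^n)$ solves the continuous problem \eqref{eqn.weakv}--\eqref{eqn.weakq} at $t_n$ while $(\bu_h^n, p_h^n)$ solves the discrete mixed problem \eqref{eqn.uhph.error}, I would first split the error using a suitable interpolant or projection $\bu_I \in \bV_h$ of $\bu^n$ (e.g. the canonical $\bV_h$-interpolant supplied by the degrees of freedom) and write $\bu^n - \bu_h^n = (\bu^n - \bu_I) + (\bu_I - \bu_h^n)$. The first term is controlled by the approximation estimates of Lemma~\ref{lem.approx} and the assumed regularity $\bu^n \in (H^{k+1}(\cT_h))^2$, giving an $O(h^{k+1})$ contribution. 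The heart of the matter is estimating the discrete error $\bdelta_h := \bu_I - \bu_h^n \in \bV_h$; because $B(\bu_h^n, q_h) = -(G^n, q_h)$ matches the divergence constraint of the canonical interpolant, $\bdelta_h$ lies in the discrete kernel $\cK_h$ from \eqref{defn.kerneldiscrete}, so I can test \eqref{eqn.errorv} with $\bv_h = \bdelta_h$ and exploit the kernel-coercivity of $\cA_h$ from Lemma~\ref{lem.propertiesdiscrete}(f) while the pressure term $B(\bdelta_h, p_h^n)$ drops out.

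Second, I would estimate the resulting consistency terms. Testing with $\bdelta_h$ produces the difference between the discrete and continuous forms: terms of the shape $\cA_h(c_h^n; \bu_I, \bdelta_h) - \cA(c^n; \bu^n, \bdelta_h)$ and $(\bg(c_h^n), \bdelta_h)_h - (\bg(c^n), \bdelta_h)$. Each of these is exactly the kind of ``replace the true bilinear form and data by their projected, stabilized discrete counterparts'' difference handled by the auxiliary Lemma~\ref{lem.aux}. Applying that lemma with $\kappa = A$ (respectively with $\bg$), $\sigma = c^n$, $\widehat{\sigma} = c_h^n$, and the appropriate polynomial degrees, I pick up three groups of contributions: projection/approximation terms of order $h^{k+1}$ (using the regularity $A(c^n)\bu^n, \bg(c^n) \in (H^{k+1}(\cT_h))^2$ and $\bu^n, c^n \in H^{k+1}$), stabilization terms again of order $h^{k+1}$, and crucially a term proportional to $\|c^n - c_h^n\|$ coming from the Lipschitz dependence of $A$ and $\bg$ on the concentration. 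Dividing through by the coercivity constant yields $\|\bdelta_h\| \lesssim h^{k+1} + \|c^n - c_h^n\|$, and combining with the interpolation term gives the stated velocity bound.

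Third, for the pressure I would invoke the discrete inf-sup (LBB) condition for the pair $(\bV_h, Q_h)$, which is the standard stability property of the $H(\divc)$-conforming mixed VEM and is implicit in the well-posedness already asserted for \eqref{eqn.semidiscrete}. Writing $p^n - p_h^n = (p^n - p_I) + (p_I - p_h^n)$ with $p_I$ the $L^2$-projection of $p^n$ onto $Q_h$, the first piece is $O(h^{k+1})$ by Lemma~\ref{lem.approx}(a) and the regularity $p^n \in H^{k+1}(\cT_h)$. For the discrete pressure difference $\pi_h := p_I - p_h^n \in Q_h$, the inf-sup condition gives $\|\pi_h\| \lesssim \sup_{\bv_h \in \bV_h} B(\bv_h, \pi_h)/\|\bv_h\|_{\bV_h}$; I would rewrite $B(\bv_h, \pi_h)$ using the momentum equations \eqref{eqn.weakv} and \eqref{eqn.errorv} so that it becomes a combination of the already-estimated velocity error together with the same consistency terms from Lemma~\ref{lem.aux}. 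This bounds the numerator by $h^{k+1} + \|c^n - c_h^n\| + \|\bu^n - \bu_h^n\|$, and substituting the velocity estimate closes the pressure bound.

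The main obstacle I anticipate is the careful bookkeeping in the consistency step: one must track that the nonlinearities $A(c)$ and $\bg(c)$ are evaluated at the discrete concentration $c_h^n$ rather than $c^n$, so that the Lipschitz-continuity hypothesis is what converts the concentration mismatch into the explicit $\|c^n - c_h^n\|$ factor, and one must verify that the projected arguments fed to Lemma~\ref{lem.aux} use the correct degrees ($r = k+1$ for the concentration projector, $s = t = k$ for the velocity projectors) so that all remaining approximation and stabilization contributions land at the optimal rate $h^{k+1}$. Since this is precisely the structure of \cite[Theorem 1]{Veiga_miscibledisplacement_2021}, the argument is expected to transfer with only the substitution of Lemma~\ref{lem.approx} for its conforming analogue, which is why the paper states the result and skips the detailed proof.
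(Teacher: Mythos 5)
Your proposal is correct and follows exactly the argument the paper relies on: the paper itself gives no proof but defers to \cite[Theorem 1]{Veiga_miscibledisplacement_2021}, and your plan (splitting via the canonical $\bV_h$-interpolant whose commuting divergence property places the discrete error in $\cK_h$, kernel coercivity from Lemma~\ref{lem.propertiesdiscrete}, consistency terms handled by Lemma~\ref{lem.aux} with $r=k+1$, $s=t=k$ producing the $\|c^n-c_h^n\|$ factor from Lipschitz continuity of $A$ and $\bg$, and the discrete inf-sup condition for the pressure) is precisely that proof's structure. No gaps; this matches the paper's (cited) approach.
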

\subsection{Error analysis for concentration}
For a fixed $\bu(t) \in \bV$ and $t \in J$, define the projector $\P_c:Z \cap H^2(\O) \to Z_h$ \cite{ncVEM_2016,ncVEM_parabolic} by
\begin{equation}\label{eqn.projectionP}
\Gamma_{c,h}(\bu(t);\P_cc,z_h)=\Gamma_{c,\pw}(\bu(t);c,z_h)-\cN_h(\bu(t);c,z_h) \fl z_h \in Z_h,
\end{equation}
where
\begin{align}
\Gamma_{c,h}(\bu;c_h,z_h)&:=\cD_h^\bu(c_h,z_h)+\Theta_h^\bu(c_h,z_h)+(c_h,z_h)_h\\
\Gamma_{c,\pw}(\bu;c,z_h)&:=\cD_{\pw}^\bu(c,z_h)+\Theta_{\pw}^\bu(c,z_h)+(c,z_h)\\
\cN_h(\bu;c,z_h)&:=\sum_{e \in \cE_h}\int_e\left(D(\bu)\nabla c \cdot \bn_e-\frac{c\bu \cdot n_e}{2} \right)\jump{z_h}\ds
\end{align}
with
\begin{align}
\cD_h^\bu\left(c_h,z_h\right)&:=\sum_{K \in \cT_h}\int_K D(\bu) \bPi_{k}^{0,K}(\nabla c_h)\cdot\bPi_{k}^{0,K}(\nabla z_h)\dx\\
&\qquad + \nu_\cD^K(\bu)S_\cD^K((I-\Pi_{k+1}^{\nabla,K})c_h,(I-\Pi_{k+1}^{\nabla,K})z_h)\\
\Theta_h^\bu(c_h,z_h)&:=\half\sum_{K\in \cT_h}[(\bu\cdot \bPi_{k}^{0,K}(\nabla c_h),\Pi_{k+1}^{0,K}z_h)_{0,K}-(\bu,\Pi_{k+1}^{0,K} c_h\cdot\bPi_{k}^{0,K}(\nabla z_h))_{0,K}]\\
&\qquad +\half
((q^++q^-)c_h,z_h)_h\label{eqn.Thetahu}\\
 \cD_{\pw}^\bu(\bullet,\bullet)&:=\sum_{K \in \cT_h}\cD^K(\bu;\bullet,\bullet),\qquad \Theta_{\pw}^\bu(\bullet,\bullet):=\sum_{K \in \cT_h}\Theta^K(\bu,\bullet;\bullet)
\end{align}
and $\jump{z_h}$ denotes the jump of $z_h$ across the edge $e$. Here, $\nu_D^K(\bu)=\nu_\cM^K(\phi)(d_m+d_t|\bPi_{0}^{0,K}\bu|)$ and $\cD^K(\bu,\bullet,\bullet)$ and $\Theta^K(\bu,\bullet,\bullet)$ denote the piecewise (elementwise) contribution of $\cD(\bullet,\bullet,\bullet)$ and $\Theta(\bullet,\bullet,\bullet)$, respectively.
\begin{rem}
The operator $\P_c$ in \cite[(5.3)]{Veiga_miscibledisplacement_2021} is defined with a slight variation. In their approach, $\P_c$ involves the $L^2$ projection of the velocity field $\bu$ for its definition. However, our choice is to consider $\bu$ without any projection, given its fixed nature.
\end{rem}
\begin{lem}\cite[Lemma 4.1]{ncVEM_2016}\label{lem.cN}
	Let $k\ge 0$, $c \in H^{k+2}(\O)$, $c\bu \in (H^{k+1}(\cT_h))^2$ and $D(\bu)\nabla c \in (H^{k+1}(\cT_h))^2$. Then,  for all $z_h \in Z_h$,
	 $$\cN_h(\bu;c,z_h) \lesssim h^{k+1}(\|D(\bu)\nabla c\|_{k+1}|z_h|_{1,\cT_h}+\|c\bu\|_{k+1}\|z_h\|).$$
\end{lem}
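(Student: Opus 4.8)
The plan is to estimate the two edge sums in $\cN_h(\bu;c,z_h)$ by one common mechanism, whose engine is the defining nonconforming moment condition of $Z_h$: since $z_h \in H^{1,\mathrm{nc}}(\cT_h;k)$, one has $\int_e \jump{z_h}\,q\ds = 0$ for every $q \in \P_k(e)$ and every $e \in \cE_h$. First I would fix an interior edge $e$, shared by elements $K^+,K^-$, and a flux field $\bw \in (H^{k+1}(\cT_h))^2$ — taking $\bw = D(\bu)\nabla c$ for the diffusive contribution and $\bw = \tfrac12 c\bu$ for the convective one. Using the moment condition I subtract from $\bw\cdot\bn_e$ its $L^2(e)$-projection $P_e(\bw\cdot\bn_e)$ onto $\P_k(e)$, so that $\int_e(\bw\cdot\bn_e)\jump{z_h}\ds = \int_e\big(\bw\cdot\bn_e - P_e(\bw\cdot\bn_e)\big)\jump{z_h}\ds$, and then apply the Cauchy--Schwarz inequality on $e$.

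The two resulting factors are treated separately. For the flux factor I would bound $\|\bw\cdot\bn_e - P_e(\bw\cdot\bn_e)\|_{0,e}$ by $\|\bw - \bPi_k^{0,K}\bw\|_{0,e}$ (since $(\bPi_k^{0,K}\bw)\cdot\bn_e \in \P_k(e)$ and $P_e$ is the $L^2(e)$-best approximation), then invoke the scaled multiplicative trace inequality $\|\varphi\|_{0,e}^2 \lesssim h_K^{-1}\|\varphi\|_{0,K}^2 + h_K\,|\varphi|_{1,K}^2$ together with the projection estimate of Lemma~\ref{lem.approx}(b); under (\textbf{D1})--(\textbf{D2}) this yields $\|\bw\cdot\bn_e - P_e(\bw\cdot\bn_e)\|_{0,e} \lesssim h_K^{k+1/2}\,|\bw|_{k+1,K}$. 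For the $z_h$-factor I would use that, because $k\ge 0$, the constant lies in $\P_k(e)$, whence $\jump{z_h}$ has vanishing mean on $e$; a scaled trace--Poincar\'e inequality on $K^\pm$ then gives $\|\jump{z_h}\|_{0,e} \lesssim h_e^{1/2}\,|z_h|_{1,K^+\cup K^-}$.

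Multiplying the two bounds, using the mesh regularity $h_e \simeq h_K \simeq h$, and summing over $e \in \cE_h$ by a discrete Cauchy--Schwarz inequality — the finite edge-overlap guaranteed by (\textbf{D1})--(\textbf{D2}) ensuring $\sum_e |z_h|^2_{1,K^+\cup K^-} \lesssim |z_h|_{1,\cT_h}^2$ — delivers the diffusive estimate $\sum_e \int_e (D(\bu)\nabla c\cdot\bn_e)\jump{z_h}\ds \lesssim h^{k+1}\,\|D(\bu)\nabla c\|_{k+1}\,|z_h|_{1,\cT_h}$. The convective term is handled by the same chain; here I would additionally exploit that $\bu\cdot\bn$ is single-valued across interior edges (a consequence of $\bu \in \bV \subset H(\divc,\Omega)$) and that $\bu\cdot\bn = 0$ on $\partial\Omega$, which lets the edge sum be recast elementwise so that the $z_h$-dependence enters through the $L^2$-norm $\|z_h\|$ rather than the broken seminorm, giving $\sum_e\int_e(\tfrac12 c\bu\cdot\bn_e)\jump{z_h}\ds \lesssim h^{k+1}\|c\bu\|_{k+1}\|z_h\|$. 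Adding the two contributions proves the claim.

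I expect the main obstacle to be the $z_h$-factor: controlling the nonconforming jump $\|\jump{z_h}\|_{0,e}$ on arbitrary star-shaped polygonal elements, where the usual reference-element trace and Poincar\'e arguments must be replaced by their scaled polygonal counterparts valid under (\textbf{D1})--(\textbf{D2}). Coupled with this is the delicate half-power bookkeeping: the flux factor contributes $h^{k+1/2}$ and the jump factor $h^{1/2}$, and these must combine to exactly $h^{k+1}$ once the edge sum closes, which hinges on $h_e \simeq h_K$. Finally, producing the $L^2$-norm $\|z_h\|$ (instead of $|z_h|_{1,\cT_h}$) for the convective flux — the form needed for the later Gr\"onwall step in the concentration analysis — is the point requiring the continuity of the normal velocity and the most care.
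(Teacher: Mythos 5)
The first thing to note is that the paper never proves this lemma at all: it is imported by citation, and what \cite[Lemma 4.1]{ncVEM_2016} actually establishes is the nonconformity bound for the diffusive flux alone (Poisson setting), with conclusion carried by $|z_h|_{1,\cT_h}$, proved by exactly the mechanism you describe — kill the $\P_k(e)$ moments of the flux via the nonconformity condition, Cauchy--Schwarz on each edge, the scaled trace inequality plus Lemma~\ref{lem.approx}(b) for the flux factor (giving $h^{k+1/2}$), and the zero-mean trace--Poincar\'e bound $\|\jump{z_h}\|_{0,e}\lesssim h^{1/2}|z_h|_{1,K^+\cup K^-}$ for the jump factor, all legitimate on star-shaped polygons under (\textbf{D1})--(\textbf{D2}). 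So your treatment of the diffusive term is correct and is the intended argument, half-power bookkeeping included.

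The genuine gap is the convective term, precisely at the step you yourself flag as delicate. Single-valuedness of $c\bu\cdot\bn_e$ and $\bu\cdot\bn=0$ on $\partial\O$ give
\begin{equation}
\sum_{e\in\cE_h}\int_e(c\bu\cdot\bn_e)\jump{z_h}\ds=\sum_{K\in\cT_h}\int_K\big[\divc(c\bu)\,z_h+c\bu\cdot\nabla z_h\big]\dx,
\end{equation}
but this does not trade $|z_h|_{1,\cT_h}$ for $\|z_h\|$ at order $h^{k+1}$. If you subtract an $H(\divc)$-conforming interpolant $\bw_h$ of $c\bu$ with $\bw_h\cdot\bn_e\in\P_k(e)$ and $\divc\bw_h=\Pi_k^{0}\divc(c\bu)$ (its edge sum vanishes by the moment condition), what survives is $\sum_K\int_K(I-\Pi_k^{0,K})\divc(c\bu)\,z_h\dx+\sum_K\int_K(c\bu-\bw_h)\cdot\nabla z_h\dx$: the second term irreducibly carries $\nabla z_h$, and the first, with only $c\bu\in(H^{k+1}(\cT_h))^2$, gives $h^{k}\|c\bu\|_{k+1}\|z_h\|$ — recovering $h^{k+1}$ forces you to pair it with $(I-\Pi_k^{0,K})z_h$, i.e.\ with $|z_h|_{1,\cT_h}$ again. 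Your edge-by-edge route has the same accounting: the flux factor is $h^{k+1/2}$, while $\|\jump{z_h}\|_{0,e}$ is either $h^{1/2}|z_h|_{1,K^+\cup K^-}$ (zero-mean Poincar\'e) or $h^{-1/2}\|z_h\|_{0,K^+\cup K^-}$ (trace), so you can reach $h^{k+1}(\cdot)|z_h|_{1,\cT_h}$ or $h^{k}(\cdot)\|z_h\|$, never the printed $h^{k+1}(\cdot)\|z_h\|$. This is not a defect of your technique; the printed bound cannot hold in general. Take $k=0$, $c\equiv1$ (so the diffusive part of $\cN_h$ vanishes), $\bu=\nabla^{\perp}\psi$ a smooth compactly supported divergence-free field, and on a uniform triangulation let $z_h$ be the Crouzeix--Raviart function (which lies in $Z_h$) whose midpoint dofs vanish on all axis-parallel edges and equal $\pm h$ on the diagonals, the sign on each square matched to $\mathrm{sign}(\partial_{xx}\psi-\partial_{yy}\psi)$ at its center; then $\|z_h\|\simeq h$ and $\|c\bu\|_{1}\simeq1$, while $\cN_h(\bu;1,z_h)=-\tfrac12\sum_K\int_K\bu\cdot\nabla z_h\dx\gtrsim h$, incompatible with the claimed $\lesssim h\|c\bu\|_{1}\|z_h\|\simeq h^{2}$. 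The statement your argument does prove, namely $\cN_h(\bu;c,z_h)\lesssim h^{k+1}\big(\|D(\bu)\nabla c\|_{k+1}+\|c\bu\|_{k+1}\big)|z_h|_{1,\cT_h}$, is what the cited reference supplies and is sufficient for every application in the paper: the bound of $A_5$ in the proof of Lemma~\ref{lem.cPcerror}(a), the term $B_{2,4}$ in the duality argument (where $|c-\P_c c|_{1,\cT_h}\lesssim h^{k+1}$ and $|\psi-\psi_I|_{1,\cT_h}\lesssim h\|\psi\|_2$ do the work), and the appendix, where $\cN_h$ cancels against the elementwise integration by parts rather than being estimated. So your proof is complete for the lemma with $|z_h|_{1,\cT_h}$ in both slots; no argument along these lines will produce the $\|z_h\|$ version as printed, and you should not try.
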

\begin{lem}\label{lem.projectionPc}
	The projector operator $\P_c$ in \eqref{eqn.projectionP} is well-defined under the assumption that $\bu, q^+$ and $q^-$ are bounded in $L^\infty(\O)$ for all $t \in J$.
\end{lem}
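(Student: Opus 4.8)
The plan is to establish that $\P_c$ is well-defined by showing that the defining relation \eqref{eqn.projectionP} determines a unique element $\P_c c \in Z_h$. Since \eqref{eqn.projectionP} is a linear variational problem in $\P_c c$ with $z_h \in Z_h$ ranging over the finite-dimensional space, and since $Z_h$ is finite-dimensional, the Lax--Milgram lemma reduces the task to verifying two facts about the bilinear form $\Gamma_{c,h}(\bu; \cdot, \cdot)$ on $Z_h \times Z_h$: continuity (boundedness) and coercivity. The right-hand side $\Gamma_{c,\pw}(\bu; c, z_h) - \cN_h(\bu; c, z_h)$ must also be shown to be a bounded linear functional of $z_h$, but this is routine given the boundedness assumptions on $\bu, q^+, q^-$ together with Lemma~\ref{lem.cN} for the $\cN_h$ term.

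First I would verify coercivity of $\Gamma_{c,h}(\bu; \cdot, \cdot)$ with respect to a suitable norm on $Z_h$. Writing out $\Gamma_{c,h}(\bu; z_h, z_h) = \cD_h^\bu(z_h, z_h) + \Theta_h^\bu(z_h, z_h) + (z_h, z_h)_h$, the key observation is that the skew-symmetric structure built into $\Theta_h$ is designed precisely so that the diagonal contribution of the first two (transport-type) terms in $\Theta_h^\bu$ vanishes or is controlled: evaluating at $c_h = z_h$, the term $(\bu \cdot \bPi_{k}^{0,K}(\nabla z_h), \Pi_{k+1}^{0,K} z_h)_{0,K} - (\bu, \Pi_{k+1}^{0,K} z_h \cdot \bPi_{k}^{0,K}(\nabla z_h))_{0,K}$ should cancel up to projection mismatches, leaving only the manifestly nonnegative reaction term $\half((q^+ + q^-) z_h, z_h)_h$ with $q^+, q^- \ge 0$. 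The diffusion term $\cD_h^\bu(z_h, z_h)$ is bounded below by a multiple of $|z_h|_{1,\cT_h}^2$ by the diffusion coercivity (analogous to Lemma~\ref{lem.propertiesdiscrete}(d) and property (i) of Lemma~\ref{lem.propertiescontinuous}), and the added $(z_h, z_h)_h$ term supplies the $L^2$ control via the stabilization bound \eqref{eqn.SM}. Together these give $\Gamma_{c,h}(\bu; z_h, z_h) \gtrsim \|z_h\|^2 + |z_h|_{1,\cT_h}^2$, i.e.\ coercivity in the broken $H^1$ norm on $Z_h$.

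Next I would check continuity of $\Gamma_{c,h}(\bu; \cdot, \cdot)$: each of $\cD_h^\bu$, $\Theta_h^\bu$, and $(\cdot, \cdot)_h$ is bounded on $Z_h \times Z_h$ by the product of broken-norm factors, using the $L^\infty$ bounds on $\bu$ (for the transport and dispersion coefficients), on $q^+ + q^-$, and the upper stabilization bounds in \eqref{eqn.SM}--\eqref{eqn.SD} together with the $L^2$-stability of the projectors $\bPi_k^{0,K}$ and $\Pi_{k+1}^{0,K}$. The boundedness of $D(\bu)$ entries follows from \eqref{defn.D} and $\bu \in L^\infty(\O)$. With coercivity and continuity in hand, Lax--Milgram guarantees existence and uniqueness of $\P_c c$, proving the operator is well-defined.

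I expect the main obstacle to be the coercivity step, specifically pinning down the sign and control of the convective part of $\Theta_h^\bu$ after it is discretized with mismatched projection orders ($\bPi_k^{0,K}$ on gradients versus $\Pi_{k+1}^{0,K}$ on the function). In the continuous setting the two halves of $\Theta$ cancel exactly by integration by parts, yielding a purely skew-symmetric form whose diagonal is zero; but at the discrete level the projectors break this exact antisymmetry, so one must argue either that the residual is still nonnegative or that it can be absorbed into the coercive diffusion and reaction terms. This is exactly the role of the symmetrized $\Theta$ formulation emphasized after \eqref{eqn.weakz_alt}, and I would lean on the corresponding argument in \cite[Lemma 3.2]{Veiga_miscibledisplacement_2021} (adapted via Lemma~\ref{lem.approx}.c) to close this gap; the remaining continuity and right-hand-side bounds are then straightforward consequences of the stated $L^\infty$ hypotheses and Lemma~\ref{lem.cN}.
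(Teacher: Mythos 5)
Your overall route is the same as the paper's: Lax--Milgram applied to $\Gamma_{c,h}(\bu;\cdot,\cdot)$ on $Z_h$, with continuity, coercivity, and boundedness of the right-hand-side functional $\Gamma_{c,\pw}(\bu;c,\cdot)-\cN_h(\bu;c,\cdot)$. However, the step you flag as the ``main obstacle'' is not an obstacle at all, while the step you treat as routine contains the genuine gap. First, the obstacle: since $\bu$ is kept fixed (not projected) in \eqref{eqn.projectionP}, the convective part of $\Theta_h^{\bu}$ in \eqref{eqn.Thetahu} is \emph{exactly} skew-symmetric at the discrete level --- evaluating at $c_h=z_h$, both terms are the identical integral $\sum_{K\in\cT_h}\int_K \bu\cdot\bPi_k^{0,K}(\nabla z_h)\,\Pi_{k+1}^{0,K}z_h\dx$, so the diagonal vanishes identically. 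There is no ``projection mismatch'' residual to absorb; the paper simply cancels these terms and keeps $\half((q^++q^-+2)z_h,z_h)_h\ge 0$.

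The genuine gap is your claim that the added $(z_h,z_h)_h$ term ``supplies the $L^2$ control via the stabilization bound \eqref{eqn.SM}.'' The pairing $(\cdot,\cdot)_h$ appearing in $\Gamma_{c,h}$ is the projected $L^2$ product (consistent with the definitions in Section 3.2); it contains no stabilization --- \eqref{eqn.SM} concerns $S_\cM$ inside $\cM_h$ in \eqref{defn.Mh}, which does not occur in $\Gamma_{c,h}$. Consequently the zeroth-order term only yields $\|\Pi_{k+1}^{0}z_h\|^2$, which vanishes on $Z_h\cap\ker(\Pi_{k+1}^{0})$ and therefore cannot by itself give $\gtrsim\|z_h\|^2$ with an $h$-independent constant. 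The paper closes precisely this hole: by Pythagoras $\|\Pi_{k+1}^{0}z_h\|\ge\|\Pi_{0}^{0}z_h\|$, and then the elementwise \Poincare--Friedrichs inequality \eqref{PFI.b} combines the mean-value control with the $|z_h|_{1,\cT_h}^2$ coming from the diffusion term to recover the full broken $H^1$ coercivity uniformly in $h$. This uniform coercivity (not mere finite-dimensional invertibility) is what is invoked later in the proof of Lemma~\ref{lem.cPcerror}, so the step cannot be skipped. A further, smaller inaccuracy: bounding the right-hand-side term $\cN_h(\bu;c,\cdot)$ via Lemma~\ref{lem.cN} requires $c\in H^{k+2}(\O)$ together with $c\bu$ and $D(\bu)\nabla c$ in $(H^{k+1}(\cT_h))^2$, which exceeds the $H^2$ regularity available on the domain of $\P_c$ when $k\ge 1$; the paper instead estimates $\cN_h$ directly, inserting the edge projections $\P_k^e$ and $\P_0^e$ and exploiting the jump orthogonality built into $H^{1,\rm{nc}}(\cT_h;k)$, which only needs $\bu\in L^\infty(\O)$ and the stated regularity of $c$.
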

\begin{proof}
To apply Lax-Milgram Lemma, consider the left-hand side of \eqref{eqn.projectionP}. The continuity of $\cD_h^\bu(\bullet,\bullet)$ is analogue to the one in Lemma~\ref{lem.propertiesdiscrete}.c. The definition of $\Theta_h^\bu(\bullet,\bullet)$, a generalised \Holder inequality, inverse estimate $\|\bPi_k^{0,K}\bu\|_{0,\infty,K} \lesssim \|\bu\|_{0,\infty,K}$ from \cite[(41)]{Veiga_miscibledisplacement_2021} and the stability of the $L^2$ projector lead to the continuty of $\Theta_h^\bu(\bullet,\bullet)$. The \Holder inequality and the stability of the $L^2$ projector provide the continuity of $(\bullet,\bullet)_h$. A combination of these estimates shows the continuity of $\Gamma_{c,h}(\bu;\bullet,\bullet)$. For $z_h \in Z_h$, since $((q^{+}+q^{-})z_h,z_h)_h \ge 0$,
\begin{align}
\Theta_h^\bu(z_h,z_h)+(z_h,z_h)_h&=\half[(\bu\cdot\nabla z_h,z_h)_h+((q^++q^-+2)z_h,z_h)_h-(\bu,z_h\nabla z_h)_h]\\&=\half[((q^++q^-+2)z_h,z_h)_h] \ge \|\Pi_{k+1}^{0}z_h\|^2,
\end{align}
where $\Pi_{k+1|K}^0=\Pi_{k+1}^{0,K}$ for any $K \in \cT_h$. This and the coercivity of $\cD_h^\bu(z_h,z_h)$ from Lemma~\ref{lem.propertiesdiscrete}.d (with $\bu_h$ replaced by $\bu$) read
\begin{align}
\Gamma_{c,h}(\bu;z_h,z_h)\gtrsim \|\Pi_{k+1}^{0}z_h\|^2+|z_h|_{1,\cT_h}^2 \ge \|\Pi_{0}^{0}z_h\|^2+|z_h|_{1,\cT_h}^2
\end{align}
with an application of Pythagorus theorem for $\Pi_{k+1}^{0}z_h-\Pi_0^0 z_h$ and $\Pi_0^0z_h$ in the last step. The \Poincare-Friedrich inequality for piecewise $H^1$ function \eqref{PFI.b} provides, for $K \in \cT_h$,
\begin{align}
 \|\Pi_{0}^{0}z_h\|_{0,K}^2+|z_h|_{1,K}^2&=\|\bar{z_h}\|_{0,K}^2+|z_h|_{1,K}^2 \gtrsim \|z_h\|_{0,K}^2+|z_h|_{1,K}^2,
\end{align}
where $\bar{z_h}$ denotes the average of $z_h$ over $K$. A combination of the above two displayed estimates shows the coercivity of $\Gamma_{c,h}(\bu;z_h,z_h)$. 

\smallskip

\noindent  For $\bu \in L^\infty(\O)$, $c \in Z$, and $z_h \in Z_h$,
the generalised \Holder inequality and the definition of $D(\bu)$ in \eqref{defn.D} (also see Lemma \ref{lem.propertiescontinuous}.g) imply
\begin{align}
	\cD_{\pw}^\bu(c,z_h)
	&\le \sum_{K \in \cT_h}\|D(\bu)\|_{0,\infty,K}\|\nabla c\|_{0,K}\|\nabla z_h\|_{0,K}\lesssim \|\bu\|_{0,\infty,\O}\|c\|_{1,\O}\|z_h\|_{1,\cT_h}.\label{eqn.Dpw}
\end{align}
The remaining two terms on the definition of $\Gamma_{c,\pw}(\bu;\bullet,\bullet)$ is estimated as
\begin{align}
	\Theta_{\pw}^\bu (c,z_h)&+(c,z_h)=\sum_{K \in \cT_h}\half[(\bu\cdot\nabla c,z_h)_{0,K}-(\bu,c\nabla z_h)_{0,K}]+\half((q^++q^-+2)c,z_h)\\&
	\le \half(\|\bu\|_{0,\infty,\O}(|c|_{1,\cT_h}\|z_h\|+\|c\||z_h|_{1,\cT_h})+\|q^++q^-+2\|_{0,\infty,\O}\|c\|\|z_h\|)\label{eqn.thethapw}
\end{align}
with a generalised \Holder inequality in the last step. 
The definition of $H^1(\cT_h;k)$ and $L^2(e)$ projection, Cauhy-Schwarz inequality, and approximation properties $\|D(\bu)\nabla c  -\P_k^e(D(\bu)\nabla c)\|_{0,e}\lesssim h^{-1/2}\|D(\bu)\nabla c\|$ and $\|\jump{z_h}-\P_0^e(\jump{z_h})\|_{0,e}\lesssim h^{1/2}|z_h|_{1,\cT_h}$ prove
\begin{align*}
	\cN_h(\bu;c,z_h)&=\sum_{e \in \cE_h}\int_e\left(D(\bu)\nabla c  -\P_k^e(D(\bu)\nabla c)\right)\cdot \bn_e\jump{z_h}\ds\\
	&\qquad -\half\sum_{e \in \cE_h}\int_e\left(\nabla c\bu-\P_k^e(c \bu)\right)\cdot \bn_e\jump{z_h}\ds\\
	&=\sum_{e \in \cE_h}\int_e\left(D(\bu)\nabla c  -\P_k^e(D(\bu)\nabla c)\right)\cdot \bn_e(\jump{z_h}-\P_0^e(\jump{z_h}))\ds\\
	&\qquad -\half\sum_{e \in \cE_h}\int_e\left(\nabla c\bu-\P_k^e(c \bu)\right)\cdot \bn_e(\jump{z_h}-\P_0^e(\jump{z_h}))\ds\\
	& \lesssim \|\bu\|_{0,\infty,\O}|c|_{1,\cT_h}\|z_h\|_{1,\cT_h}.
\end{align*}
\noindent This, \eqref{eqn.thethapw}, and \eqref{eqn.Dpw} show that $\Gamma_{c,\pw}(\bu;c,\bullet)$ is a continuous functional with respect to $\|z_h\|_{1,\cT_h}$. The result then follows from the Lax-Milgram lemma.
\end{proof}
\noindent Given $c \in H^{k+2}(\cT_h)$, there exists an interpolant $c_I \in Z_h$ and a piecewise $\P_{k+1}$ polynomial $c_{\pi}$ such that \cite{ncVEM_2016,ncVEM_parabolic}
\begin{align}
&\|c-c_I\| \lesssim h^{k+2}\|c\|_{k+2,\cT_h}, \quad \|c-c_I\|_{1,\cT_h} \lesssim h^{k+1}\|c\|_{k+2,\cT_h}\label{eqn.cI}\\
&\|c-c_{\pi}\| \lesssim h^{k+2}\|c\|_{k+2,\cT_h}, \quad \|c-c_{\pi}\|_{1,\cT_h} \lesssim h^{k+1}\|c\|_{k+2,\cT_h}.\label{eqn.cpi}
\end{align}
Also, note that 
\begin{equation}\label{eqn.cpiproperties}
\Pi_{k+1}^{0}c_\pi=c_\pi, \quad \Pi_{k+1}^{\nabla}c_\pi=c_\pi.
\end{equation} 
\begin{lem}\label{lem.cPcerror}
	Assume that $\bu \in (H^{k+1}(\cT_h)\cap L^\infty(\O))^2,$ $c \in H^{k+2}(\cT_h) \cap W^{1,\infty}(\cT_h)$, $q^+,q^- \in L^\infty(\O)$, $(q^++q^-)c \in (H^{k+1}(\cT_h))^2$, $\bu \cdot \nabla c \in H^{k+1}(\cT_h)$, and $D(\bu)\nabla c \in (H^{k+1}(\cT_h))^2$ for all $t \in J$. Then 
	\begin{align*}
(a)\,	\|c-\P_c c\|_{1,\cT_h} \lesssim h^{k+1}, \qquad & (b)\,\|c-\P_c c\| \lesssim h^{k+2},
	\end{align*}
	where $\P_c$ is defined in \eqref{eqn.projectionP}.
\end{lem}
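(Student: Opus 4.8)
The plan is to prove $(a)$ by an energy argument built on the coercivity of $\Gamma_{c,h}(\bu;\cdot,\cdot)$ already established in the proof of Lemma~\ref{lem.projectionPc}, and then to upgrade it to the sharper $L^2$ bound $(b)$ by an Aubin--Nitsche duality argument that trades one extra power of $h$ against the $H^2$ regularity of an adjoint problem on the convex domain $\Omega$.

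For $(a)$, I would first split the error through the nonconforming interpolant $c_I\in Z_h$ of \eqref{eqn.cI}, writing $c-\P_c c=(c-c_I)+(c_I-\P_c c)$; the first summand is $O(h^{k+1})$ in $\|\cdot\|_{1,\cT_h}$ by \eqref{eqn.cI}. Setting $\chi_h:=\P_c c-c_I\in Z_h$, coercivity gives $\|\chi_h\|_{1,\cT_h}^2\lesssim \Gamma_{c,h}(\bu;\chi_h,\chi_h)$, and the defining relation \eqref{eqn.projectionP} lets me replace $\Gamma_{c,h}(\bu;\P_c c,\chi_h)$ by $\Gamma_{c,\pw}(\bu;c,\chi_h)-\cN_h(\bu;c,\chi_h)$, so that
\[
\|\chi_h\|_{1,\cT_h}^2\lesssim \big(\Gamma_{c,\pw}(\bu;c,\chi_h)-\Gamma_{c,h}(\bu;c_I,\chi_h)\big)-\cN_h(\bu;c,\chi_h).
\]
The jump term is bounded by $h^{k+1}\|\chi_h\|_{1,\cT_h}$ directly from Lemma~\ref{lem.cN}, using the data regularity $D(\bu)\nabla c\in(H^{k+1}(\cT_h))^2$ and $c\bu\in(H^{k+1}(\cT_h))^2$. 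For the bracketed consistency term I would insert the piecewise polynomial $c_\pi$ of \eqref{eqn.cpi}: the polynomial part $\Gamma_{c,h}(\bu;c_\pi,\chi_h)-\Gamma_{c,\pw}(\bu;c_\pi,\chi_h)$ collapses because \eqref{eqn.cpiproperties} annihilates every stabilisation contribution and turns the projected gradients into true gradients, leaving only mismatches of the form $(D(\bu)\nabla c_\pi-\bPi_k^0(D(\bu)\nabla c_\pi),\,\nabla\chi_h-\bPi_k^0\nabla\chi_h)_{0,K}$, which are $O(h^{k+1})\|\chi_h\|_{1,\cT_h}$ by Lemma~\ref{lem.approx}; the residual differences $\Gamma_{c,h}(\bu;c_I-c_\pi,\chi_h)$ and $\Gamma_{c,\pw}(\bu;c_\pi-c,\chi_h)$ are controlled by continuity together with \eqref{eqn.cI}--\eqref{eqn.cpi}, and the genuinely nonpolynomial couplings (through $D(\bu)$ in $\cD$, through $\bu$ and $q^\pm$ in $\Theta$) are handled term-by-term by the auxiliary Lemma~\ref{lem.aux}. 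Dividing by $\|\chi_h\|_{1,\cT_h}$ yields $\|\chi_h\|_{1,\cT_h}\lesssim h^{k+1}$ and hence $(a)$.

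For $(b)$ I would run a duality argument. Let $e:=c-\P_c c$ and introduce the adjoint problem: find $\varphi\in Z\cap H^2(\Omega)$ with $\Gamma_{c,\pw}(\bu;v,\varphi)=(v,e)$ for all admissible $v$; under the stated $L^\infty$ bounds on $\bu,q^+,q^-$ and convexity of $\Omega$ this elliptic-type problem is $H^2$-regular, so $\|\varphi\|_{2}\lesssim\|e\|$. Expressing $\|e\|^2$ through this adjoint identity, inserting the interpolant $\varphi_I$ of $\varphi$, and invoking the Galerkin-type orthogonality furnished by \eqref{eqn.projectionP}, every contribution picks up one additional factor of $h$ relative to the computation in $(a)$: the diffusion, convective and reaction terms each produce $h\,\|\varphi\|_{2}\,\|e\|_{1,\cT_h}$ or better, the nonconforming consistency terms $\cN_h$ are re-estimated by Lemma~\ref{lem.cN} applied to the dual solution, and the projection mismatches are again bounded by Lemma~\ref{lem.aux}. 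Combining with the part-$(a)$ bound $\|e\|_{1,\cT_h}\lesssim h^{k+1}$ gives $\|e\|^2\lesssim h^{k+2}\|e\|$, which is $(b)$.

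I expect the main obstacle to be the bookkeeping in the consistency term of $(a)$: one must simultaneously track the three distinct projectors $\bPi_k^0$, $\Pi_{k+1}^0$, $\Pi_{k+1}^\nabla$ appearing in $\Gamma_{c,h}$, the three stabilisation forms, and the nonconforming operator $\cN_h$, and verify that after inserting $c_\pi$ each residual pairs a quantity orthogonal to $\P_k$ (or $\P_{k+1}$) against $\chi_h$, so that the full order $h^{k+1}$ is recovered rather than a suboptimal power. The duality step for $(b)$ additionally relies on the adjoint consistency of the nonconforming scheme, which is precisely why the optimal jump estimate of Lemma~\ref{lem.cN} and the dual interpolation bound are indispensable.
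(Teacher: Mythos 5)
Your proposal is correct and follows essentially the same path as the paper's proof: part $(a)$ via the triangle inequality through $c_I$, coercivity of $\Gamma_{c,h}$, the defining relation \eqref{eqn.projectionP}, insertion of $c_\pi$ (whose stabilisation contributions vanish by \eqref{eqn.cpiproperties}), and Lemmas~\ref{lem.aux} and \ref{lem.cN}; part $(b)$ via Aubin--Nitsche duality with the interpolant of the dual solution and the same consistency/orthogonality ingredients. The only cosmetic difference is that you pose the adjoint problem variationally as $\Gamma_{c,\pw}(\bu;v,\varphi)=(v,e)$ whereas the paper writes its strong form $-\divc(D(\bu)\nabla \psi)-\bu \cdot \nabla\psi+(q^-+1)\psi=c-\P_c c$ with the no-flux condition, and these coincide (for conforming test functions) after integrating by parts and using $\divc\,\bu=q^+-q^-$.
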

\noindent{\it Proof of $(a)$.} The triangle inequality and \eqref{eqn.cI} lead to
	\begin{equation}\label{eqn.tri}
	\|c-\P_c c\|_{1,\cT_h} \le \|c-c_I\|_{1,\cT_h}+\|c_I-\P_c c\|_{1,\cT_h} \lesssim h^{k+1}+\|c_I-\P_c c\|_{1,\cT_h}.
	\end{equation}
	For a fixed time $t$, the coercivity of $\Gamma_{c,h}(\bu;\bullet,\bullet)$ with respect to $\|\bullet\|_{1,\cT_h}$ from the proof of Lemma \ref{lem.projectionPc} and \eqref{eqn.projectionP} show, for $z_h=\P_c c-c_I \in Z_h$,
	\begin{align}
\|z_h\|_{1,\cT_h}^2& \lesssim \Gamma_{c,h}(\bu;z_h,z_h)=\Gamma_{c,h}(\bu;\P_c c,z_h)-\Gamma_{c,h}(\bu;c_I,z_h)\nonumber\\
&=\Gamma_{c,\pw}(\bu;c,z_h)-\Gamma_{c,h}(\bu;c_I,z_h)-\cN_h(\bu;c,z_h) \nonumber\\
&=(\Gamma_{c,\pw}(\bu;c,z_h)-\Gamma_{c,h}(\bu;c_{\pi},z_h))+\Gamma_{c,h}(\bu;c_{\pi}-c_I,z_h)-\cN_h(\bu;c,z_h)\nonumber\\
&=(\cD_{\pw}^\bu(c,z_h)-\cD_h^\bu(c_{\pi},z_h))+(\Theta_{\pw} ^\bu(c,z_h)-\Theta_h^\bu(c_{\pi},z_h))\nonumber\\
&\quad + ((c,z_h)-(c_{\pi},z_h)_h)+\Gamma_{c,h}(\bu;c_{\pi}-c_I,z_h)-\cN_h(\bu;c,z_h)=: A_1+\cdots+A_5 \label{eqn.a1a2a3a4a5}
	\end{align}
	with the definition of $\Gamma_{c,\pw}(\bu;\bullet,\bullet)$ and $\Gamma_{c,h}(\bu;\bullet,\bullet)$, and $\Pi_{k+1}^{\nabla,K}c_\pi=c_\pi$ in the second last step.
An introduction of $\bPi_k^{0,K}(\nabla c)$, Lemma \ref{lem.aux} with the substitutions $\sigma=\bu$, $\widehat{\sigma}=0$, $\bchi=\nabla c$, and $\bpsi=\nabla z_h$, generalised \Holder inequality, \eqref{eqn.cpi}, and the approximation property $\|\bu-\Pi_k^{0,K}\bu\| \lesssim h^{k+1}$ leads to 
\begin{align}
A_{1}&=\sum_{K \in \cT_h} ((D(\bu)\nabla c,\nabla z_h)_{0,K}-(D(\bu)\bPi_k^{0,K}(\nabla c_{\pi}),\bPi_k^{0,K}(\nabla z_h))_{0,K})\\
&=\sum_{K \in \cT_h} ((D(\bu)\nabla c,\nabla z_h)_{0,K}-(D(\bPi_k^{0,K}\bu)\bPi_k^{0,K}(\nabla c),\bPi_k^{0,K}(\nabla z_h))_{0,K})\\
&\qquad +\sum_{K \in \cT_h} (D(\bPi_k^{0,K}\bu)\bPi_k^{0,K}(\nabla (c-c_{\pi})),\bPi_k^{0,K}(\nabla z_h))_{0,K}\\
&\qquad +\sum_{K \in \cT_h} (D(\bPi_k^{0,K}\bu-\bu)\bPi_k^{0,K}(\nabla c_{\pi}),\bPi_k^{0,K}(\nabla z_h))_{0,K}\lesssim h^{k+1}|z_h|_{1,\cT_h}.\label{eqn.a1}
\end{align}
A simple manipulation yields
\begin{align}
2A_{2}
&=\sum_{K \in \cT_h}[((\bu\cdot\nabla c,z_h)_{0,K}-(\bu\cdot\nabla c_\pi,z_h)_h)+(((q^++q^-)c,z_h)_{0,K}-((q^++q^-)c_\pi,z_h)_h)\\
&\qquad -((\bu,c\nabla z_h)_{0,K}-(\bu,c_\pi\nabla z_h)_h)]\\
&= \sum_{K \in \cT_h}[((\bu\cdot\nabla c,z_h)_{0,K}-(\bu\cdot\nabla c,z_h)_h)+(((q^++q^-)c,z_h)_{0,K}-((q^++q^-)c,z_h)_h)\\
&\qquad -((\bu,c\nabla z_h)_{0,K}
 -(\bu,c\nabla z_h)_h)+(\bu\cdot\nabla (c-c_\pi),z_h)_h)\\
 &\qquad +((q^++q^-)(c-c_\pi),z_h)_h-(\bu,(c-c_\pi)\nabla z_h)_h]:=A_{2,1}+\cdots +A_{2,6}.\label{eqn.a2}
\end{align}
Lemma~\ref{lem.aux}, the generalised \Holder inequality, and the continuity property of $L^2$ projectors $\Pi_{k+1}^{0,K}$ and $\bPi_k^{0,K}$ provide
 \begin{equation}
A_{2,1}+A_{2,2}+A_{2,3} \lesssim h^{k+1}\|z_h\|_{1,\cT_h}. \label{eqn.a21a22a23}
\end{equation}The definition of $(\bullet,\bullet)_h$, generalised \Holder inequality, the stability of $L^2$ projectors, and \eqref{eqn.cpi} read
\begin{equation}
A_{2,4}+A_{2,5}+A_{2,6} \lesssim h^{k+1}\|z_h\|_{1,\cT_h}. \label{eqn.a24a25a26}
\end{equation}
A combination of \eqref{eqn.a21a22a23}-\eqref{eqn.a24a25a26} in \eqref{eqn.a2} shows 
\begin{equation}
A_2 \lesssim h^{k+1}\|z_h\|_{1,\cT_h}.
\end{equation}
Since $\Pi_{k+1}^{0}c_\pi=c_\pi$, $(c_{\pi},z_h)_h=(c_{\pi},z_h)$. This, \Holder inequality, and \eqref{eqn.cpi} result in
\begin{align}
A_{3}&=((c-c_{\pi}),z_h) \lesssim h^{k+1}\|z_h\|_{1,\cT_h}.\label{eqn.a3}
\end{align}
The continuity of $\Gamma_{c,h}(\bu;\bullet,\bullet)$ from Lemma \ref{lem.projectionPc}, a triangle inequality with $c$ and the approximation properties in \eqref{eqn.cI} and \eqref{eqn.cpi} provide
\begin{align}
A_4&\lesssim \|\bu\|_{0,\infty,\O}\|c_{\pi}-c_I\|_{1,\cT_h}\|z_h\|_{1,\cT_h}\lesssim h^{k+1}\|z_h\|_{1,\cT_h}.\label{eqn.a4}
\end{align}
A substitution of the estimates for $A_1$, $A_2$, $A_3$, and $A_4$ in \eqref{eqn.a1a2a3a4a5} and Lemma~\ref{lem.cN} for $A_5$ leads to $\|\P_c c-c_I\|_{1,\cT} \lesssim h^{k+1}$. This and \eqref{eqn.tri} concludes the proof of first estimate $(a)$. \qed

\medskip


\medskip

\noindent {\it Proof of $(b)$.} For the $L^2$ estimate, we follow the Aubin-Nitsche duality arguments. Let $\psi \in H^1(\O)$ solves the following adjoint problem.
\begin{align}\label{eqn.dual}
-\divc(D(\bu)\nabla \psi)-\bu \cdot \nabla\psi+(q^-+1)\psi&=c-\P_c c\mbox{ in } \O, \quad\\
 D(\bu)\nabla \psi\cdot \bn&=0 \mbox{ in } \partial \O.
\end{align}
Since $\O$ is convex, $\psi \in H^2(\O)$ and 
\begin{equation}\label{eqn.aprioridual}
\|\psi\|_2 \lesssim \|c-\P_c c\|.
\end{equation}
 The dual problem \eqref{eqn.dual}, integration by parts, the relation \[(\bu\cdot\nabla \psi,z_h)_{0,K}=(z_h\bu,\nabla \psi)_{0,K}=\half[(\bu\cdot\nabla \psi,z_h)_{0,K}-(\nabla \cdot \bu, \psi z_h)_{0,K}-(\psi,\bu\cdot\nabla z_h)_{0,K}+(\psi \bu \cdot n_e,z_h)_{0,\partial K}]\] for $z_h \in Z_h$, and introduction of the interpolant $\psi_I$ of $\psi$ show
\begin{align}
\|c-\P_c c\|^2
&=(\P_c c-c, \bu \cdot \nabla\psi)-(c-\P_c c,\divc(D(\bu)\nabla \psi))+(c-\P_c c,(q^-+1)\psi)\\
&=\half[(\P_c c-c,\bu \cdot \nabla \psi)-\sum_{K\in \cT_h}(\psi, \bu \cdot \nabla(\P_c c-c))_{0,K}-((q^+-q^-), (\P_c c-c)\psi)]\\
&\qquad +\cD_{\pw}^\bu(\psi,c-\P_c c)-\cN_h(\bu;\psi,c-\P_c c)+(c-\P_c c,(q^-+1)\psi)\\
&=\half[(\P_c c-c,\bu \cdot \nabla \psi)-\sum_{K\in \cT_h}(\psi, \bu \cdot \nabla(\P_c c-c))_{0,K}-((q^++q^-), (\P_c c-c)\psi)]\\
&\qquad +\cD_{\pw}^\bu(\psi,c-\P_c c)-\cN_h(\bu;\psi,c-\P_c c)+(c-\P_c c,\psi)\\
&= \cD_{\pw}^\bu(c-\P_c c,\psi)+\Theta_{\pw}^\bu(c-\P_c c,\psi)+(c-\P_c c,\psi)-\cN_h(\bu;\psi,c-\P_c c)\\
&= (\cD_{\pw}^\bu(c-\P_c c,\psi-\psi_I)+\Theta_{\pw}^\bu(c-\P_c c,\psi-\psi_I)+(c-\P_c c,\psi-\psi_I))\\
&\qquad+(\cD_{\pw}^\bu(c-\P_c c,\psi_I)+\Theta_{\pw}^\bu(c-\P_c c,\psi_I)+(c-\P_c c,\psi_I) -\cN_h(\bu;\psi,c-\P_c c))\\
&=:B_1+B_2.\label{eqn.b1b2}
\end{align}
A generalised \Holder inequality, the estimate $(a)$, \eqref{eqn.cI} for $\psi \in H^2(\O)$ and \eqref{eqn.aprioridual} provide
\begin{equation}\label{eqn.b1}
B_1 \lesssim h^{k+2}\|\psi\|_2 \lesssim h^{k+2}\|c-\P_c c\|.
\end{equation}
The definition of projection in \eqref{eqn.projectionP} implies
\begin{align}
B_2&=\cD_{\pw}^\bu(c,\psi_I)+\Theta_{\pw}^\bu(c,\psi_I)+(c,\psi_I)-\cD_{\pw}^\bu(\P_c c,\psi_I)-\Theta_{\pw}^\bu(\P_c c,\psi_I)-(\P_c c,\psi_I)\\
&\qquad  -\cN_h(\bu;\psi,c-\P_c c)\\
&=\Gamma_{c,\pw}(\bu;c,\psi_I)-\cD_{\pw}^\bu(\P_c c,\psi_I)-\Theta_{\pw}^\bu(\P_c c,\psi_I)-(\P_c c,\psi_I) -\cN_h(\bu;\psi,c-\P_c c)\\
&=\Gamma_{c,h}(\bu;\P_c c,\psi_I)+\cN_h(\bu;c,\psi_I)-\cD_{\pw}^\bu(\P_c c,\psi_I)-\Theta_{\pw}^\bu(\P_c c,\psi_I)-(\P_c c,\psi_I)\\
&\qquad  -\cN_h(\bu;\psi,c-\P_c c)\\
&=(\cD_h^\bu(\P_c c,\psi_I)-\cD_{\pw}^\bu(\P_c c,\psi_I))+(\Theta_h^\bu(\P_c c,\psi_I)-\Theta_{\pw}^\bu(\P_c c,\psi_I))+((\P_c c,\psi_I)_h-(\P_c c,\psi_I))\\
&\qquad +(\cN_h(\bu;c,\psi_I) -\cN_h(\bu;\psi,c-\P_c c))=:B_{2,1}+B_{2,2}+B_{2,3}+B_{2,4}.\label{eqn.b2}
\end{align}
Arguments analogous to the proof of \cite[(5.48),(5.49)]{VEM_general_2016} with $p_h=\P_c c$ and $p=c$ leads to
\begin{align}
B_{2,1}&+B_{2,2}+B_{3,2} \lesssim h^{k+2}\|c-\P_c c\|.
\end{align}
Since $\psi \in H^2(\O)$, $\cN_h(\bu;c,\psi)=0$. This, Lemma~\ref{lem.cN}, \eqref{eqn.cI}, $(a)$, and \eqref{eqn.aprioridual} show
\begin{align}
B_{2,4}&=\cN_h(\bu;c,\psi-\psi_I) -\cN_h(\bu;\psi,c-\P_c c) \\
&\lesssim h^{k+1}(\|D(\bu)\nabla c\|_{k+1}|\psi-\psi_I|_{1,h}+\|c\bu\|_{k+1}\|\psi-\psi_I\|)\\
&\qquad +h(\|D(\bu)\psi\|_1|c-\P_c c|_{1,h}+\|\psi\bu\|_1\|c-\P_c c\|)\\
&\lesssim h^{k+2}\|\psi\|_2 \lesssim h^{k+2}\|c-\P_c c\|.
\end{align}
A combination of the above estimates in \eqref{eqn.b2} leads to $B_2 \lesssim h^{k+2}\|c-\P_c c\|$. This and \eqref{eqn.b1} in \eqref{eqn.b1b2} concludes the proof. \qed

\smallskip

\noindent Differentiation of \eqref{eqn.projectionP} with respect to time and analogous arguments as in Lemma~\ref{lem.cPcerror} yield the following result.
\begin{cor}\label{cor.ctPcterror}
Provided that the continuous data and solution are sufficiently regular in space and time, it holds
	\begin{align*}
\displaystyle	(a)\,	\left\|\frac{\partial}{\partial t}(c-\P_c c)\right\|_{1,\cT_h} \le h^{k+1}\xi_{1,t}, \qquad & (b)\,\left\|\frac{\partial}{\partial t}(c-\P_c c)\right\| \le h^{k+2}\xi_{0,t},
	\end{align*}
	where the constants $\xi_{0,t},\xi_{1,t}>0$ are independent of $h$.
\end{cor}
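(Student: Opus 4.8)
The plan is to differentiate the defining relation \eqref{eqn.projectionP} in time and then replay the two-part argument of Lemma~\ref{lem.cPcerror} with $c$ replaced by $\partial_t c$. Since the test function $z_h\in Z_h$ is time-independent, differentiating \eqref{eqn.projectionP} moves the time derivative onto $\P_c c$, onto $c$, and onto the velocity $\bu$ wherever it enters the three forms $\Gamma_{c,h}$, $\Gamma_{c,\pw}$, $\cN_h$ (through the tensor $D(\bu)$ in \eqref{defn.D}, through the convective coefficient, and through the $\bu$-dependent stabilisation weight $\nu_\cD^K(\bu)$). Collecting terms, I would record the resulting identity as
\[
\Gamma_{c,h}(\bu;\partial_t\P_c c,z_h)=\Gamma_{c,\pw}(\bu;\partial_t c,z_h)-\cN_h(\bu;\partial_t c,z_h)+R(z_h),
\]
where $R(z_h)$ gathers precisely those terms in which the time derivative has fallen on $\bu$. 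Thus $\partial_t\P_c c$ solves the same projection problem as the one that would define $\P_c(\partial_t c)$, perturbed only by the remainder $R$.

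For part~(a) I would use the coercivity of $\Gamma_{c,h}(\bu;\cdot,\cdot)$ with respect to $\|\cdot\|_{1,\cT_h}$ established in the proof of Lemma~\ref{lem.projectionPc}, testing with $z_h=\partial_t\P_c c-(\partial_t c)_I$, where $(\partial_t c)_I\in Z_h$ is the interpolant of $\partial_t c$. Exactly as in \eqref{eqn.a1a2a3a4a5}, the quantity $\Gamma_{c,h}(\bu;z_h,z_h)$ splits into the five consistency contributions $A_1,\dots,A_5$ written now for $\partial_t c$, together with the extra term $R(z_h)$. Each $A_i$ is bounded by $h^{k+1}\|z_h\|_{1,\cT_h}$ through the same use of Lemma~\ref{lem.aux}, the approximation bounds \eqref{eqn.cI}--\eqref{eqn.cpi}, and Lemma~\ref{lem.cN}, because under the temporal-regularity hypothesis $\partial_t c$ inherits the spatial regularity assumed for $c$ in Lemma~\ref{lem.cPcerror}. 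A triangle inequality with $\|\partial_t c-(\partial_t c)_I\|_{1,\cT_h}\lesssim h^{k+1}$ then yields~(a). For part~(b) I would repeat the Aubin--Nitsche argument of the proof of Lemma~\ref{lem.cPcerror}(b) with the dual datum taken to be $\partial_t(c-\P_c c)$, so that the analogue of \eqref{eqn.aprioridual} gives $\|\psi\|_2\lesssim\|\partial_t(c-\P_c c)\|$; the split into $B_1+B_2$ of \eqref{eqn.b1b2} is then estimated verbatim, invoking the freshly proved part~(a) wherever the $H^1$ bound on $\partial_t(c-\P_c c)$ is required.

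The main obstacle is the remainder $R(z_h)$, which has no counterpart in Lemma~\ref{lem.cPcerror}: it is produced by differentiating the nonlinear velocity dependence of $D(\bu)$, of the convective coefficient, of the jump functional $\cN_h$, and of the stabilisation weight $\nu_\cD^K(\bu)$. To establish $|R(z_h)|\lesssim h^{k+1}\|z_h\|_{1,\cT_h}$ I would exploit the (assumed) differentiability of $D(\cdot)$ and $\nu_\cD^K(\cdot)$ in the velocity together with the $L^\infty$-boundedness of $\partial_t\bu$, and then treat each resulting bilinear expression by the same consistency mechanism as $A_1,\dots,A_5$, inserting $\bPi_k^{0,K}(\partial_t\bu)$ and $\bPi_k^{0,K}(\nabla c)$ and applying Lemma~\ref{lem.aux}. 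This step is exactly where the temporal regularity of the data and solution is consumed: one needs $\partial_t\bu$, $\partial_t(D(\bu)\nabla c)$, $\partial_t(c\bu)$ and $\partial_t((q^++q^-)c)$ to lie in the same broken Sobolev spaces with $h$-uniform norms, and it is precisely these time-derivative norms that the constants $\xi_{0,t},\xi_{1,t}$ absorb.
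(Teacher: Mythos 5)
Your proposal follows exactly the paper's own route: the paper's proof of Corollary~\ref{cor.ctPcterror} consists of the single remark that one differentiates \eqref{eqn.projectionP} with respect to time and then repeats the arguments of Lemma~\ref{lem.cPcerror}, which is precisely your plan, down to reusing the coercivity of $\Gamma_{c,h}$ for part (a) and the Aubin--Nitsche duality for part (b). Your explicit isolation and estimation of the remainder $R(z_h)$ generated by the time-dependence of $\bu$ (through $D(\bu)$, the convective terms, the weight $\nu_\cD^K(\bu)$, and $\cN_h$) is a detail the paper leaves entirely implicit, and you handle it by the correct mechanism.
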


\noindent The proof of the below error estimate is provided in Section \ref{sec:appendix}, which follow by adapting the corresponding proof in \cite{Veiga_miscibledisplacement_2021} to account for the nonconforming VEM approach, mainly \eqref{eqn.projectionP}.
\begin{thm}\label{thm.c}
Under the mesh assumption (\textbf{D1})-(\textbf{D3}) and the assumption that the continuous data and solution are sufficiently regular in space and time, it holds
	\begin{equation}
		\|c^n-c_h^n\|\lesssim\|c_{0,h}-c^0\|+h^{k+1}+\tau,
	\end{equation}
	where $\lesssim $ depends on $\bu, c,q^+,q^-,\frac{\partial \bu}{\partial t},\frac{\partial^2 \bu}{\partial t^2},\frac{\partial c}{\partial t},$ and $\frac{\partial^2 c}{\partial t^2}.$
\end{thm}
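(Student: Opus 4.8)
The plan is to run the standard parabolic energy argument relative to the operator--adapted projector $\P_c$ of \eqref{eqn.projectionP}, which plays the role of a Ritz/elliptic projection for the convection--diffusion--reaction part. I would split $c^n-c_h^n=\rho^n+\theta^n$ with $\rho^n:=c^n-\P_c c^n$ and $\theta^n:=\P_c c^n-c_h^n\in Z_h$. Since $\|\rho^n\|\lesssim h^{k+2}$ by Lemma~\ref{lem.cPcerror}(b), the entire task reduces to estimating the discrete error $\theta^n$, for which I would first derive and then bound a one--step error equation.

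To set up that equation, I would test the continuous equation \eqref{eqn.weakz_alt} at $t=t_{n+1}$ against a nonconforming $z_h\in Z_h$, restore the consistency term $\cN_h(\bu^{n+1};c^{n+1},z_h)$ in order to pass to the piecewise forms $\Gamma_{c,\pw}$, and subtract the fully discrete scheme \eqref{eqn.fullydiscreteq_zh}. Invoking the defining identity \eqref{eqn.projectionP} of $\P_c$ (with the velocity frozen at $t_{n+1}$) rewrites $\Gamma_{c,\pw}(\bu^{n+1};c^{n+1},z_h)-\cN_h$ as $\Gamma_{c,h}(\bu^{n+1};\P_c c^{n+1},z_h)$, so the principal part collapses onto $\cD_h$ and $\Theta_h$ evaluated at $\theta^{n+1}$. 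The remainder splits into four controllable defects: the backward Euler truncation $\cM_h\!\big((c^{n+1}-c^n)/\tau-\partial_t c^{n+1},z_h\big)$, bounded by $\tau\,\|\partial_{tt}c\|$ after Taylor expansion; the time derivative of the projection error, handled by Corollary~\ref{cor.ctPcterror}; the quadrature and nonlinearity mismatches between the exact--data forms and their discrete counterparts, bounded through the auxiliary Lemma~\ref{lem.aux}; and the velocity discrepancy between the exact $\bu^{n+1}$ and the lagged discrete field $\bu_h^n$.

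I would then take $z_h=\theta^{n+1}$ and use the coercivity of $\cM_h$ and $\cD_h$ (Lemma~\ref{lem.propertiesdiscrete}(b),(d)) together with the identity $2(a-b,a)=\|a\|^2-\|b\|^2+\|a-b\|^2$ on the discrete time difference to reach an inequality of the type $\|\theta^{n+1}\|^2-\|\theta^n\|^2+\tau\,|\theta^{n+1}|_{1,\cT_h}^2\lesssim \tau\big(h^{k+1}+\tau+\|c^n-c_h^n\|\big)^2+(\text{small }\theta^{n+1}\text{ terms})$, the last group being absorbed on the left by Young's inequality. The crux is the velocity term: Theorem~\ref{thm.up} gives $\|\bu^n-\bu_h^n\|\lesssim h^{k+1}+\|c^n-c_h^n\|$, so estimating $\cD_h(\bu_h^n;\cdot,\cdot)-\cD_h(\bu^n;\cdot,\cdot)$ and the analogous $\Theta_h$ differences (via the Lipschitz dependence of $D$ on its argument and Lemma~\ref{lem.aux}) feeds $\|c^n-c_h^n\|$ back into the right--hand side. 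Because this feedback sits at the previous time level $n$, it is precisely of Gronwall type.

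Finally I would sum these inequalities from the first step up to $n$, replace $\|c^n-c_h^n\|$ by $\|\theta^n\|+O(h^{k+2})$ using the triangle inequality and the $L^2$ bound on $\rho^n$, and close with a discrete Gronwall inequality to absorb the accumulated $\|\theta^j\|$ contributions. The initial level contributes through $\|\theta^0\|\le\|\P_c c^0-c^0\|+\|c^0-c_{0,h}\|\lesssim h^{k+2}+\|c_{0,h}-c^0\|$, and a last triangle inequality $\|c^n-c_h^n\|\le\|\rho^n\|+\|\theta^n\|$ yields the asserted bound. The principal obstacle throughout is ensuring that the self--referential velocity error stays linear and small enough for Gronwall to apply, which is why the lagged coupling in \eqref{eqn.fullydiscreteq_zh} and the frozen--velocity definition of $\P_c$ are essential.
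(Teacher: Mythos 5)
Your proposal follows essentially the same route as the paper's appendix proof: the same splitting via the frozen-velocity projector $\P_c$, the same key ingredients (Lemma~\ref{lem.cPcerror} and Corollary~\ref{cor.ctPcterror} for the projection errors, Lemma~\ref{lem.aux} for the quadrature/nonlinearity defects, and the lagged velocity bound $\|\bu^{n}-\bu_h^{n-1}\|\lesssim \tau+h^{k+1}+\|c^{n-1}-c_h^{n-1}\|$, which is the paper's Lemma~\ref{lem.cPcnuuhn}), the same coercivity-based energy step, and the same Gronwall-type recursion closed by the triangle inequality at the initial and final levels. The only ingredient you leave implicit is the $L^\infty$ stability bound $\|\bPi_k^{0,K}\nabla\P_c c\|_{0,\infty,K}\lesssim 1$ of Lemma~\ref{lem.Pinablainfty} (the reason assumption (\textbf{D3}) is required), which is what allows the merely $L^2$-controlled velocity discrepancy to be paired with the projected gradient in the $\Theta_h$ and $\cD_h$ difference terms.
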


\noindent Consequently, the combination of Theorem~\ref{thm.c} with Theorem~\ref{thm.up} leads to the main result stated in \eqref{eqn.mainresult}.
\section{Numerical results}\label{sec.numericalresults}

This section presents a few examples on general polygonal meshes for the lowest order case $k=0$ to illustrate the theoretical estimates in the previous section. These experiments are conducted on both an ideal test case (Example \ref{sec.example1} and \ref{sec.example2}) and a more realistic test case (Example \ref{sec.example3}), providing comprehensive validation. An interesting aspect of VEM is its ability to be implemented solely based on the degrees of freedom and the polynomial component of the approximation space, see \cite{Veiga_hitchhikersVEM} for details on the implementation procedure.

\smallskip

\noindent  The model problem in Example 1  (resp. 2)  is constructed in such a way that the exact solution is known.  Let the errors in $L^2(\O)$ norm be denoted by
\begin{align*}
\err(p):=\|p^n-\Pi_0p_h^n\|_{},\, \err(\bu):=\|\bu^n-\bPi_0\bu_h^n\|_{},\, \mbox{ and } \err(c):=\|c^n-\Pi_1c_h^n\|_{},
\end{align*} where ($p^n, \bu^n, c^n$) (resp. ($\Pi_0p_h^n, \bPi_0\bu_h^n, \Pi_1 c_h^n$)) is the exact (resp. numerical) solution at the final time $t_n=T$.

		\begin{figure}[h!!]
	\begin{center}
		\begin{minipage}[b]{0.25\linewidth}
			{\includegraphics[width=5cm]{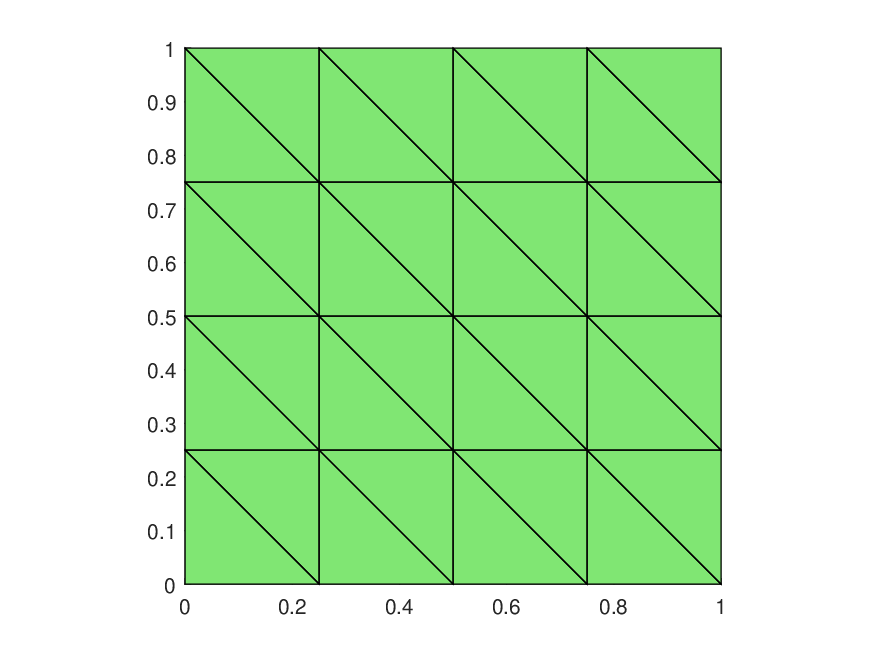}}
			\caption{Triangular Mesh}\label{fig.Triangle}
		\end{minipage}
		\begin{minipage}[b]{0.25\linewidth}
		{\includegraphics[width=5cm]{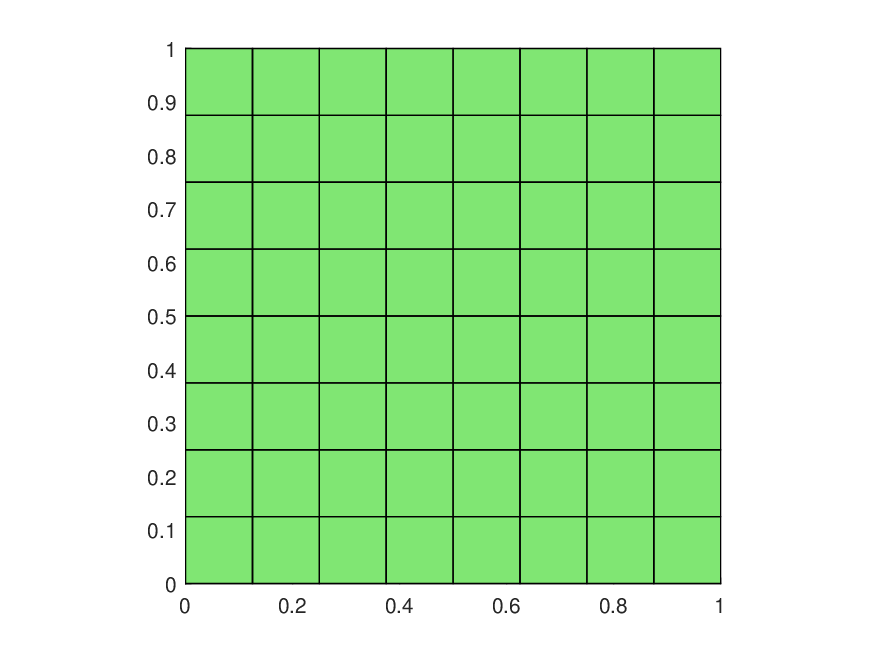}}
		\caption{Square Mesh}\label{fig.Square}
	\end{minipage}
		\begin{minipage}[b]{0.35\linewidth}
			{\includegraphics[width=5cm]{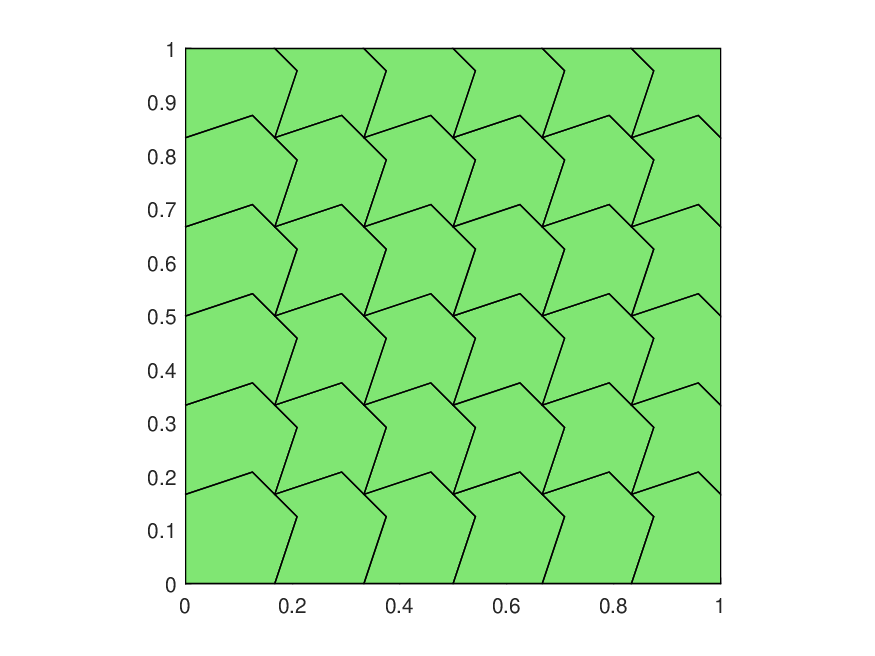}}
				\caption{Concave Mesh}\label{fig.concave}
		\end{minipage}
		\begin{minipage}[b]{0.35\linewidth}
			{\includegraphics[width=5cm]{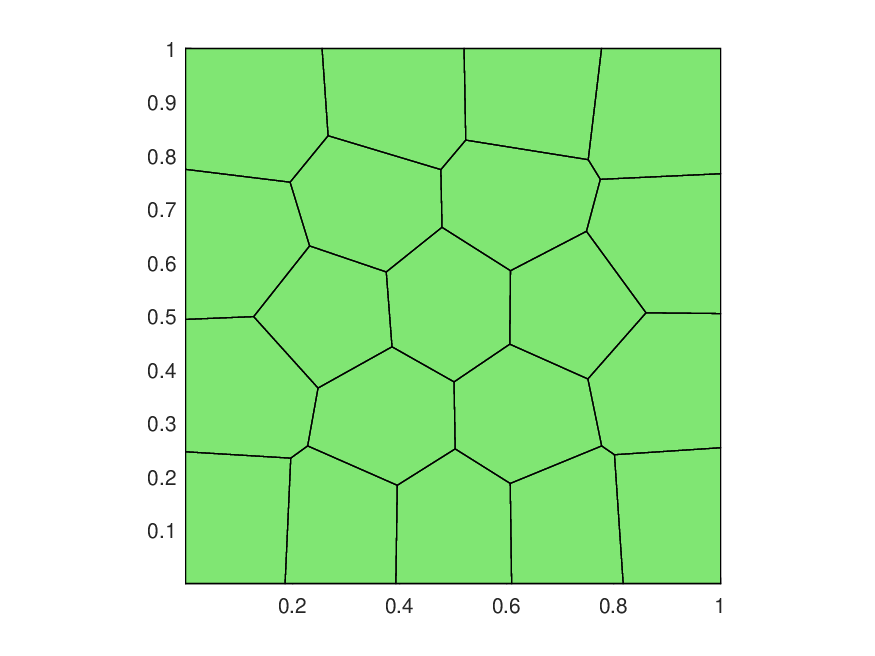}}
				\caption{Structured Voronoi Mesh}\label{fig.SV}
		\end{minipage}
	\begin{minipage}[b]{0.35\linewidth}
	{\includegraphics[width=5cm]{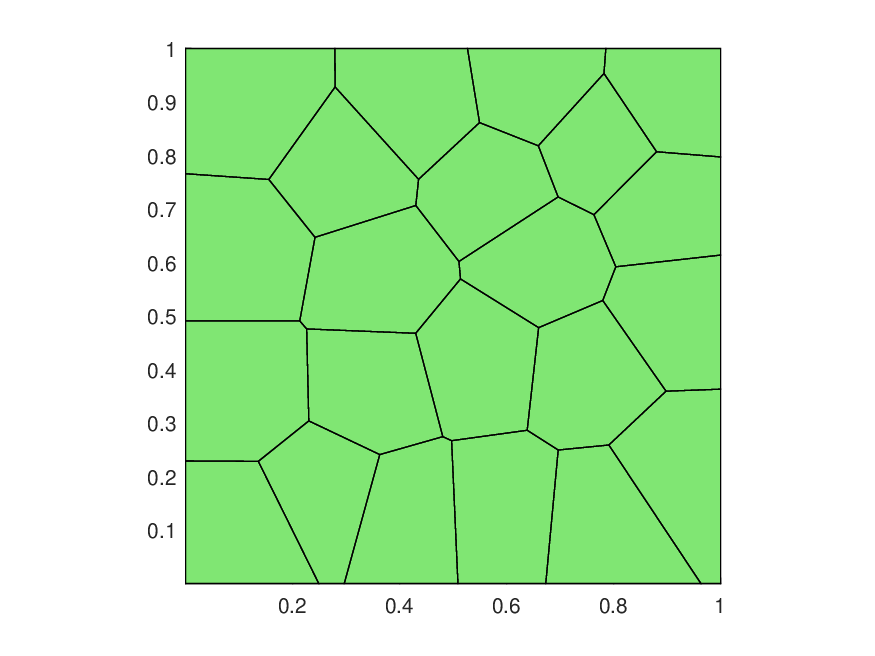}}
		\caption{Random Voronoi Mesh}\label{fig.RV}
\end{minipage}
	\end{center}
\end{figure}

\subsection{Example 1}\label{sec.example1}
\noindent Let the computational domain be $\Omega=(0,1)^2$ and consider the following generalised form of \eqref{eqn.model} taken from \cite[Section 5]{HuFuZhou_2019}:
\begin{subequations}\label{eqn.generalmodel}
\begin{align}
&\displaystyle\phi\frac{\partial c}{\partial t}+\bu\cdot \nabla c -\divc(D(\bu)\nabla c)=f(c) \label{eqn.generalmodela}\\
&\divc\,\bu=q\label{eqn.generalmodelb}\\
&\bu=-a(c)(\nabla p-\bg(c)),\label{eqn.generalmodelc}
\end{align}
\end{subequations}
with the boundary conditions \eqref{eqn.bc} and \eqref{eqn.ic} respectively. The exact solution is given by 
\begin{align*}
c(x,y,t)&=t^2(x^2(x-1)^2+y^2(y-1)^2)\\
\bu(x,y,t)&=2t^2\begin{pmatrix}
x(x-1)(2x-1)\\
y(y-1)(2y-1)
\end{pmatrix}\\
p(x,y,t)&=\frac{-1}{2}c^2-2c+\frac{17}{6300}t^4+\frac{2}{15}t^2.
\end{align*}
As in \cite{HuFuZhou_2019, Veiga_miscibledisplacement_2021}, choose $D(\bu)=|\bu|+0.02$, where $d_m=0.02, d_\ell=d_t=1$, $a(c)=c+2$, $\phi=1$, $ \bg(c)=0$ and $c_0=0$.   Then the right hand side load functions $f$ and $q$ can be computed from \eqref{eqn.generalmodela} and \eqref{eqn.generalmodelb}.

 \noindent A series of triangular, square, concave, structured Voronoi, and random Voronoi meshes (see Figure~\ref{fig.Triangle}-\ref{fig.RV}) are employed to test the convergence results for the VEM. The alternative (or diagonal) stabilisations in \cite{Veiga_miscibledisplacement_2021} were also examined for these meshes, yielding similar results; thus, it has been excluded from further consideration.

\smallskip

\noindent In the case of triangular, square, and concave meshes, the mesh size $h$ undergoes a reduction by a factor of 2 during each refinement. For the remaining two meshes, the mesh size is approximately halved with each subsequent refinement. Hence, time-step size $\tau$ is initially set to be $\tau=T/5$ (at the first level) and is subsequently halved at each subsequent level.
\begin{table}[h!!]
	\caption{\small{Convergence results, Example 1, Triangular mesh}}
	{\small{\footnotesize					\begin{center}
				\begin{tabular}{|c|c ||c
						|c||c | c ||c|c|}
					\hline
					$h$&$\tau$&$\err(\bu)$ & Order  & $\err(p)$ & Order  &$\err(c)$ & Order  \\ 
					\hline
0.707107 & 0.001000 & 0.797130 & -& 0.833294 & -& 0.159478 & -\\
0.353553 & 0.000500 & 0.503730 & 0.6622 & 0.369737 & 1.1723 & 0.077156 & 1.0475\\
0.176777 & 0.000250 & 0.265467 & 0.9241 & 0.185240 & 0.9971 & 0.036182 & 1.0925\\
0.088388 & 0.000125 & 0.134444 & 0.9815 & 0.093008 & 0.9940 & 0.017820 & 1.0217\\
0.044194 & 0.000063 & 0.067436 & 0.9954 & 0.046564 & 0.9981 & 0.008898 & 1.0020
					\\	\hline				
				\end{tabular}
		\end{center}}					
	}\label{table.eg1.Triangle}	
\end{table}

\begin{table}[h!!]
	\caption{\small{Convergence results, Example 1, Square mesh}}
	{\small{\footnotesize					\begin{center}
				\begin{tabular}{|c|c ||c
						|c||c | c ||c|c|}
					\hline
					$h$&$\tau$&$\err(\bu)$ & Order  & $\err(p)$ & Order  &$\err(c)$ & Order  \\ 
					\hline
0.353553 & 0.002000& 0.480580 & - & 0.447421 & - & 0.287735 &-\\
0.176777 & 0.001000& 0.235625 & 1.0283 & 0.226155 & 0.9843 & 0.141870 &1.0202 \\
0.088388 & 0.000500& 0.117153 & 1.0081 & 0.113820 & 0.9906 & 0.070718 &1.0044 \\
0.044194 & 0.000250& 0.058492 & 1.0021 & 0.057017 & 0.9973 & 0.035336 &1.0009 \\
0.022097 & 0.000125& 0.029235 & 1.0005 & 0.028523 & 0.9993 & 0.017666 &1.0002
					\\	\hline				
				\end{tabular}
		\end{center}}					
	}\label{table.eg1.Square}	
\end{table}

\begin{table}[h!!]
	\caption{\small{Convergence results, Example 1, Concave mesh}}
	{\small{\footnotesize					\begin{center}
				\begin{tabular}{|c|c ||c
						|c||c | c ||c|c|}
					\hline
					$h$&$\tau$&$\err(\bu)$ & Order  & $\err(p)$ & Order  &$\err(c)$ & Order  \\ 
					\hline
0.485913 & 0.002000 & 0.667205 & - & 0.629865 & - & 0.300115 & - \\
0.242956 & 0.001000 & 0.323976 & 1.0422 & 0.311893 & 1.0140 & 0.143970 & 1.0597 \\
0.121478 & 0.000500 & 0.161591 & 1.0035 & 0.156378 & 0.9960 & 0.070986 & 1.0202 \\
0.060739 & 0.000250 & 0.080718 & 1.0014 & 0.078337 & 0.9973 & 0.035369 & 1.0051 \\
0.030370 & 0.000125 & 0.040297 & 1.0022 & 0.039196 & 0.9990 & 0.017670 & 1.0012
					\\	\hline				
				\end{tabular}
		\end{center}}					
	}\label{table.eg1.Concave}	
\end{table}

\begin{table}[h!!]
	\caption{\small{Convergence results, Example 1, Structured Voronoi mesh}}
	{\small{\footnotesize					\begin{center}
				\begin{tabular}{|c|c ||c
						|c||c | c ||c|c|}
					\hline
					$h$&$\tau$&$\err(\bu)$ & Order  & $\err(p)$ & Order  &$\err(c)$ & Order  \\ 
					\hline
0.707107 & 0.002000 & 1.000000 & - & 0.999985 & -&0.290645 &-\\
0.340697 & 0.001000 & 0.433758 & 1.1439 & 0.395954 & 1.2688&0.149120 & 0.9139
\\
0.171923 & 0.000500 & 0.215287 & 1.0242
& 0.206368 & 0.9528&0.071650 & 1.0717\\
0.083555 & 0.000250 & 0.107060 & 0.9682& 0.104157 & 0.9476&0.035457 & 0.9749\\
0.047445 & 0.000125 & 0.058463 & 1.0690 & 0.056755 & 1.0729&0.017688 & 1.2288\\
0.027786 & 0.000063 & 0.033706 & 1.0293 & 0.032840 & 1.0225 & 0.008838 & 1.2968 
					\\	\hline				
				\end{tabular}
		\end{center}}					
	}\label{table.eg1.SV}	
\end{table}

\begin{table}[h!!]
	\caption{\small{Convergence results, Example 1, Random Voronoi mesh}}
	{\small{\footnotesize					\begin{center}
				\begin{tabular}{|c|c ||c
						|c||c | c ||c|c|}
					\hline
					$h$&$\tau$&$\err(\bu)$ & Order  & $\err(p)$ & Order  &$\err(c)$ & Order  \\ 
					\hline
0.736793 & 0.002000 & 1.002026 & - & 0.998901 & - & 0.292073 & - \\
0.373676 & 0.001000 & 0.434253 & 1.2316 & 0.412104 & 1.3041 & 0.148433 & 0.9970\\
0.174941 & 0.000500 & 0.198870 & 1.0290 & 0.187068 & 1.0407 & 0.071415 & 0.9640 \\
0.089478 & 0.000250 & 0.094805 & 1.1050& 0.086812 & 1.1451 & 0.035408 & 1.0464 \\
0.041643 & 0.000125 & 0.040431 & 1.1142 & 0.037884 & 1.0841 & 0.017670 & 0.9087\\
	\hline				
				\end{tabular}
		\end{center}}					
	}\label{table.eg1.RV}	
\end{table}
\noindent Table~\ref{table.eg1.Triangle}-\ref{table.eg1.RV} show  errors and orders of convergence for the velocity $\bu$, pressure $p$, and concentration $c$ for the aforementioned five types of meshes. Observe that linear order of convergences are obtained for these variables in $L^2$ norm. These numerical order of convergence clearly matches the expected order of convergence given in \eqref{eqn.mainresult}, Theorems~\ref{thm.up}, and \ref{thm.c} with ($k=0$) respectively.
\subsection{Example 2}\label{sec.example2}
In this example, we consider the generalised form \eqref{eqn.generalmodel} with \cite{Veiga_miscibledisplacement_2021}
\begin{align*}
	c(x,y,t)&=t^2(1-\exp(-100(x^2+y^2)))\\
	\bu(x,y,t)&=200t^2\begin{pmatrix}
		x\exp(-100(x^2+y^2))\\
		y\exp(-100(x^2+y^2))
	\end{pmatrix}\\
	p(x,y,t)&=\frac{-1}{2}c^2-2c+\eta_1t^4+\eta_2t^2,
\end{align*}
where $\eta_1,\eta_2$ are the constants such that $p(\cdot,t) \in L^2_0(\Omega)$ for all $t \in [0,T]$. The choice of parameters are same as that in Example \ref{sec.example1}. Observe that the analytical solutions exhibit a corner layer positioned at $(0,0)$ for all time values within the interval $(0,T)$.

\smallskip

\noindent The errors and orders of convergence for the velocity $\bu$, pressure $p$, and concentration $c$ are presented in Table \ref{table.eg2.Triangle}-\ref{table.eg2.RV}. The orders of convergence results are similar to those obtained in Example \ref{sec.example1}. 

\begin{table}[h!!]
	\caption{\small{Convergence results, Example 2, Triangular mesh}}
	{\small{\footnotesize					\begin{center}
				\begin{tabular}{|c|c ||c
						|c||c | c ||c|c|}
				\hline
				$h$&$\tau$&$\err(\bu)$ & Order  & $\err(p)$ & Order  &$\err(c)$ & Order  \\ 
				\hline
	0.353553 & 0.000500 & 0.925949 & - & 0.731189 & - & 0.072995 & - \\
	0.176777 & 0.000250 & 0.583976 & 0.6650 & 0.324370 & 1.1726 & 0.036434 & 1.0025 \\
	0.088388 & 0.000125 & 0.358792 & 0.7028 & 0.164764 & 0.9772 & 0.017865 & 1.0281 \\
	0.044194 & 0.000063 & 0.185613 & 0.9508 & 0.085695 & 0.9431 & 0.008913 & 1.0031 \\
	0.022097 & 0.000031 & 0.093620 & 0.9874 & 0.043222 & 0.9875 & 0.004508 & 0.9834
				\\	\hline				
			\end{tabular}
	\end{center}}					
}\label{table.eg2.Triangle}	
\end{table}

\begin{table}[h!!]
\caption{\small{Convergence results, Example 2, Square mesh}}
{\small{\footnotesize					\begin{center}
			\begin{tabular}{|c|c ||c
					|c||c | c ||c|c|}
			\hline
			$h$&$\tau$&$\err(\bu)$ & Order  & $\err(p)$ & Order  &$\err(c)$ & Order  \\ 
			\hline
0.353553 & 0.002000 & 1.002352 & - & 0.866949 & - & 0.286989 & - \\
0.176777 & 0.001000 & 0.730307 & 0.4568 & 0.521653 & 0.7329 & 0.142132 & 1.0138 \\
0.088388 & 0.000500 & 0.378479 & 0.9483 & 0.250530 & 1.0581 & 0.070805 & 1.0053 \\
0.044194 & 0.000250 & 0.183926 & 1.0411 & 0.127739 & 0.9718 & 0.035345 & 1.0023\\
0.022097 & 0.000125 & 0.090710 & 1.0198 & 0.064188 & 0.9928 & 0.017671 & 1.0001
			\\	\hline				
		\end{tabular}
\end{center}}					
}\label{table.eg2.Square}	
\end{table}

\begin{table}[h!!]
\caption{\small{Convergence results, Example 2, Concave mesh}}
{\small{\footnotesize					\begin{center}
		\begin{tabular}{|c|c ||c
				|c||c | c ||c|c|}
		\hline
		$h$&$\tau$&$\err(\bu)$ & Order  & $\err(p)$ & Order  &$\err(c)$ & Order  \\ 
		\hline
0.485913 & 0.002000 & 0.987667 & - & 0.904168 & - & 0.286189 & - \\
0.242956 & 0.001000 & 0.946291 & 0.0617 & 0.771791 & 0.2284 & 0.146601 & 0.9651 \\
0.121478 & 0.000500 & 0.561131 & 0.7539 & 0.372617 & 1.0505 & 0.071455 & 1.0368\\
0.060739 & 0.000250 & 0.268312 & 1.0644 & 0.179758 & 1.0516 & 0.035406 & 1.0131 \\
0.030370 & 0.000125 & 0.129542 & 1.0505 & 0.089631 & 1.0040 & 0.017679 & 1.0020 
		\\	\hline				
	\end{tabular}
	\end{center}}					
}\label{table.eg2.Concave}	
\end{table}

\begin{table}[h!!]
\caption{\small{Convergence results, Example 2, Structured Voronoi mesh}}
{\small{\footnotesize					\begin{center}
	\begin{tabular}{|c|c ||c
			|c||c | c ||c|c|}
	\hline
	$h$&$\tau$&$\err(\bu)$ & Order  & $\err(p)$ & Order  &$\err(c)$ & Order  \\ 
	\hline
0.707107 & 0.002000 & 0.999814 &- & 0.884889 & - & 0.283369 & - \\
0.340697 & 0.001000 & 0.978872 & 0.0290 & 0.826595 & 0.0933 & 0.147909 & 0.8904 \\
0.171923 & 0.000500 & 0.691810 & 0.5075 & 0.491317 & 0.7606 & 0.071902 & 1.0546 \\
0.083555 & 0.000250 & 0.338575 & 0.9903 & 0.223018 & 1.0947 & 0.035535 & 0.9768 \\
0.047445 & 0.000125 & 0.177340 & 1.1427& 0.119303 & 1.1054 & 0.017698 & 1.2317\\
	\hline				
\end{tabular}
\end{center}}					
}\label{table.eg2.SV}	
\end{table}

\begin{table}[h!!]
\caption{\small{Convergence results, Example 2, Random Voronoi mesh}}
{\small{\footnotesize					\begin{center}
\begin{tabular}{|c|c ||c
		|c||c | c ||c|c|}
\hline
$h$&$\tau$&$\err(\bu)$ & Order  & $\err(p)$ & Order  &$\err(c)$ & Order  \\ 
\hline
0.736793 & 0.002000 & 0.995989 & - & 0.889492 & -& 0.283495 & -\\
0.373676 & 0.001000 & 0.984292 & 0.0174 & 0.812099 & 0.1341 & 0.146108 & 0.9763\\
0.174941 & 0.000500 & 0.647665 & 0.5515 & 0.432563 & 0.8300 & 0.071755 & 0.9370\\
0.089478 & 0.000250 & 0.335313 & 0.9819 & 0.215149 & 1.0417 & 0.035519 & 1.0488\\
0.041643 & 0.000125 & 0.134751 & 1.1919 & 0.091517 & 1.1176 & 0.017678 & 0.9122 \\
\hline				
\end{tabular}
\end{center}}					
}\label{table.eg2.RV}	
\end{table}
\subsection{Example 3}\label{sec.example3}
In this example, we examine a more realistic test \cite{cha-07-por,Wang2000miscible, Veiga_miscibledisplacement_2021} aiming to provide a qualitative comparison with the anticipated benchmark solution found in existing literature. Here, consider \eqref{eqn.modela}-\eqref{eqn.modelc} with \eqref{eqn.bc} and \eqref{eqn.ic}, where the
spatial domain is $\Omega=(0,1000)\times (0,1000)$ ft$^2$, the time period is $[0,T]=[0,3600]$ days,
and the viscosity of the oil is $\mu(0)=1.$ cp. The injection well is situated in the upper-right corner of the domain at $(1000, 1000)$, with an injection rate of $q^+ = 30 $ ft$^2/$day and an injection concentration of $\hc = 1.0.$ Meanwhile, the production well is positioned at the lower-left corner at $(0, 0)$ and has a production rate of $q^- = 30$ ft$^2/$day. The initial concentration across the domain is given by $c_0(x,y) = 0$. The parameters are chosen as $\phi=0.1, d_\ell=50, d_t=5, \bg(c)=0$, and $a(c)=k(\bx)(1+(M^{1/4}-1)c)^4$, where $k(\bx)$, $M$ and $d_m$ are mentioned below for each of the four tests. The schemes were tested on a triangular mesh with 2048 elements and on a $64 \times 64$ square mesh, both using a time step size of $\tau=36$ days. 
\subsubsection*{Test 1} Assume that the permeability is $k(\bx)=80$, the mobility ratio between the resident and injected fluid is $M = 1$, and the molecular diffusion is $d_m=10$. Figure~\ref{fig.test1} displays the surface and contour plots of the concentration at $t=3$ years (1080 days) and $t=10$ years (3600 days) for triangular and square meshes. Since $M=1$, $a(c)$ is constant and this implies that the fluid has a constant viscosity $\mu(c)=\mu(0)$, Figure~\ref{fig.test1} shows that the velocity $\bu$ is radial and the contour plots for the concentration is circular until the injecting fluid reaches the production well at $t=3$ years. At $t=10$ years, when these plots are reached at production well, the injecting fluid continues to fill the whole domain until $c=1$.
	\begin{figure}[h!!]
	\begin{center}
			\begin{minipage}[b]{0.23\linewidth}
			{\includegraphics[width=4.5cm]{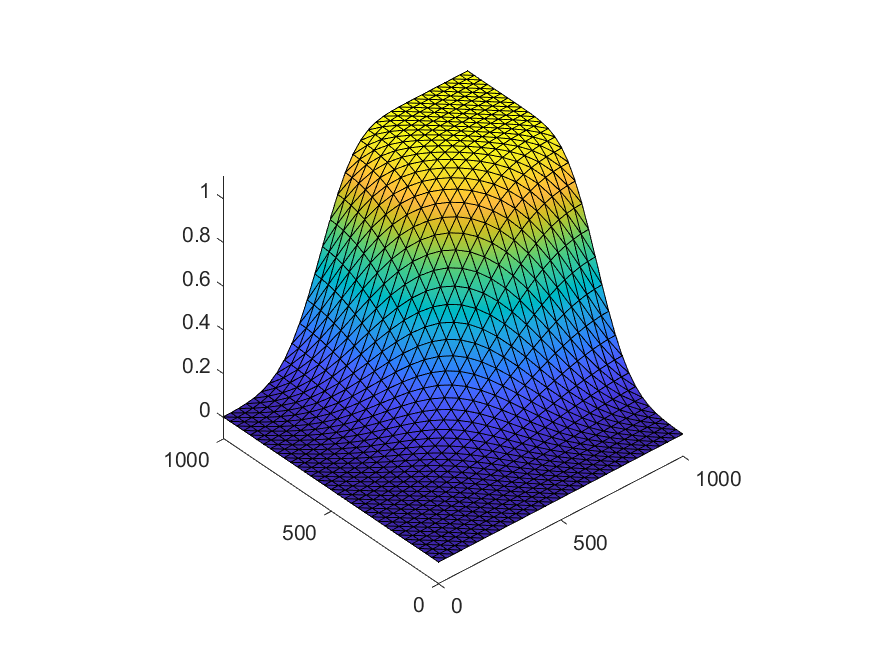}}
		\end{minipage}
		\begin{minipage}[b]{0.2\linewidth}
			{\includegraphics[width=4.2cm]{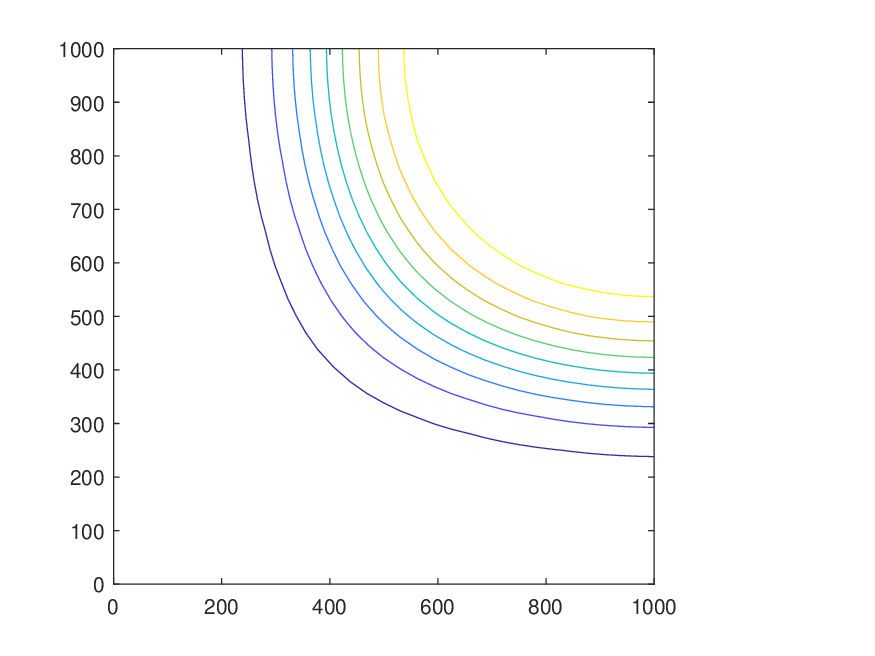}}
		\end{minipage}
		\begin{minipage}[b]{0.23\linewidth}
			{\includegraphics[width=4.5cm]{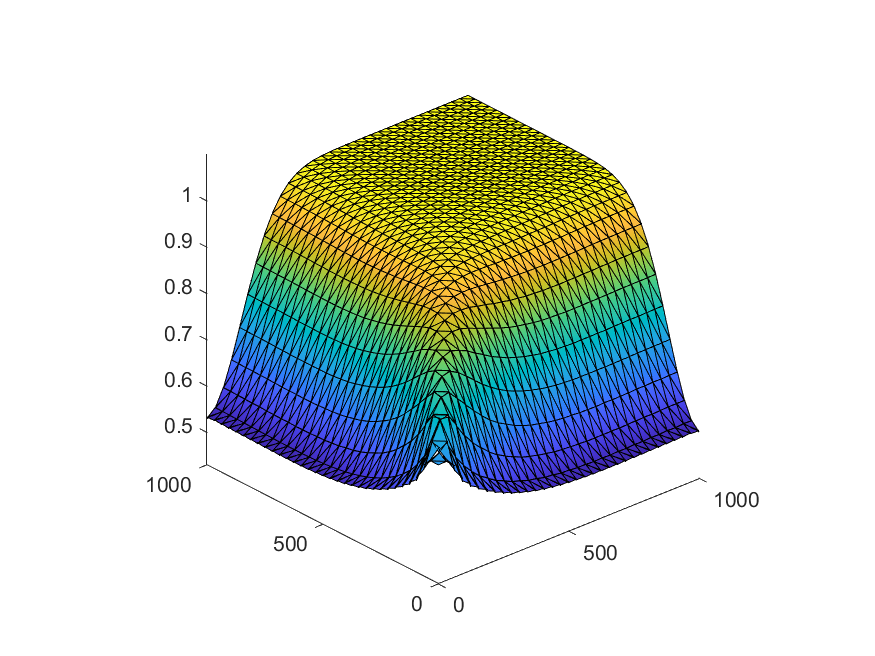}}
		\end{minipage}
		\begin{minipage}[b]{0.23\linewidth}
			{\includegraphics[width=4.2cm]{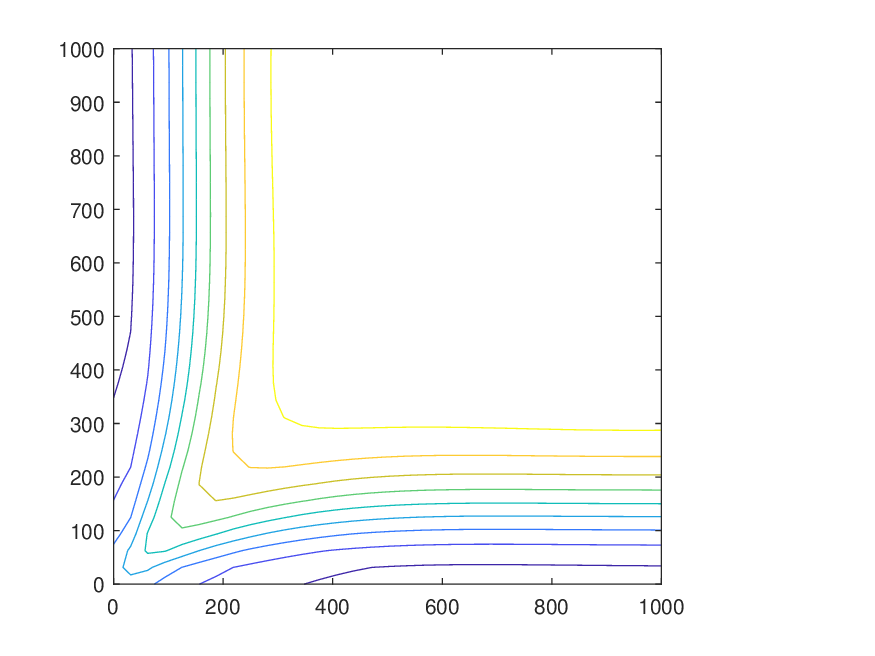}}
		\end{minipage}
		\begin{minipage}[b]{0.23\linewidth}
			{\includegraphics[width=4.5cm]{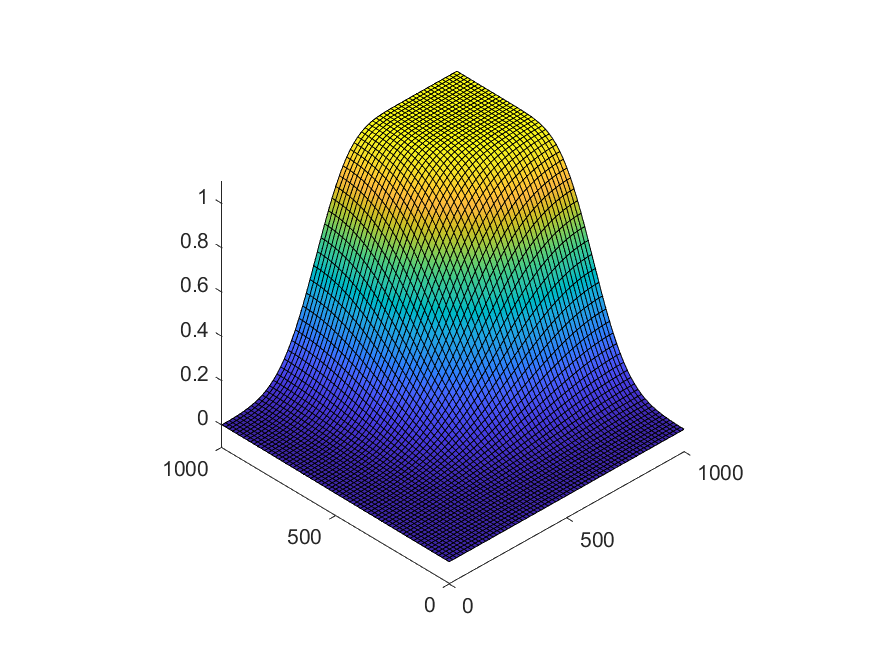}}
		\end{minipage}
		\begin{minipage}[b]{0.2\linewidth}
			{\includegraphics[width=4.2cm]{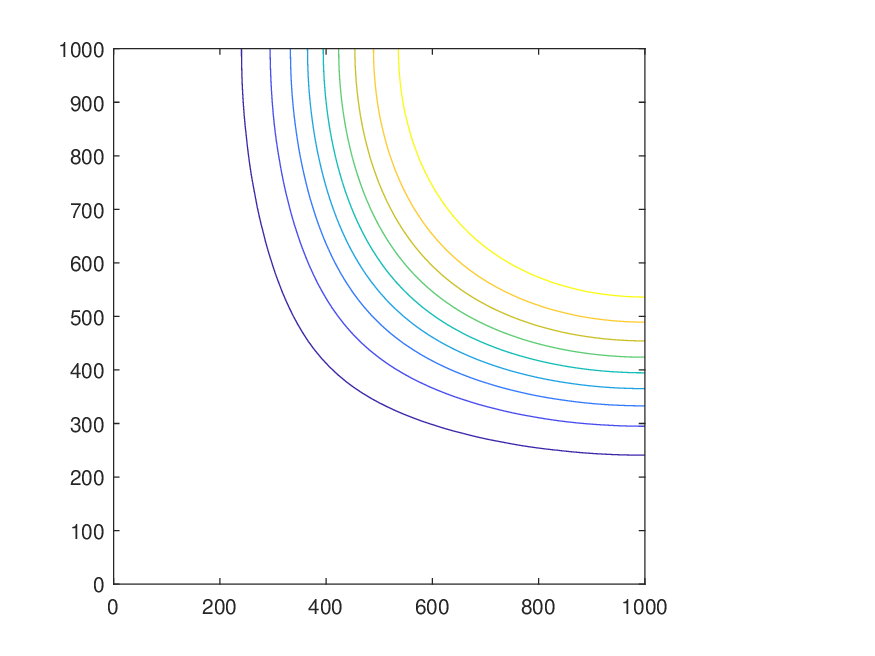}}
		\end{minipage}
			\begin{minipage}[b]{0.23\linewidth}
			{\includegraphics[width=4.5cm]{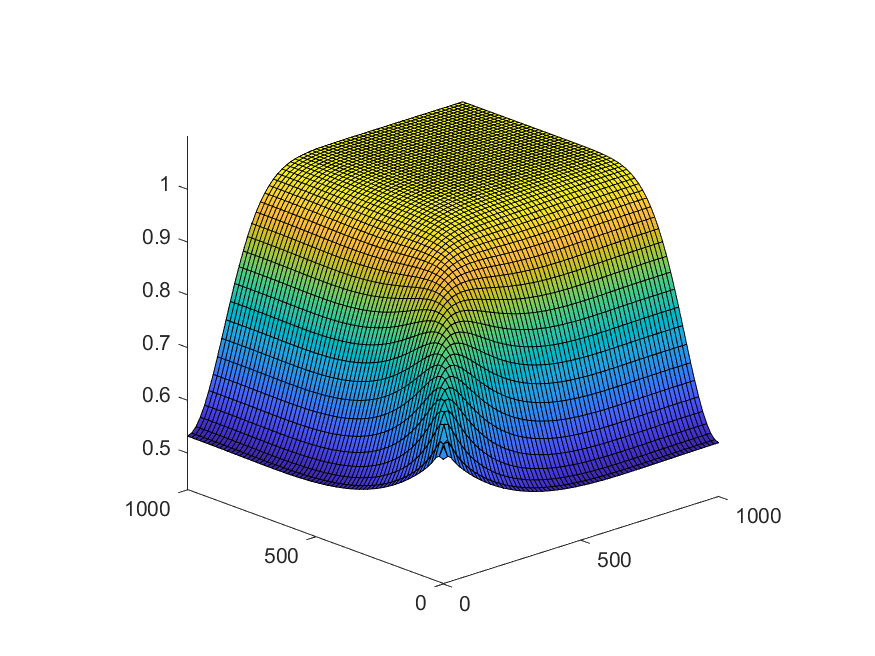}}
		\end{minipage}
		\begin{minipage}[b]{0.23\linewidth}
			{\includegraphics[width=4.2cm]{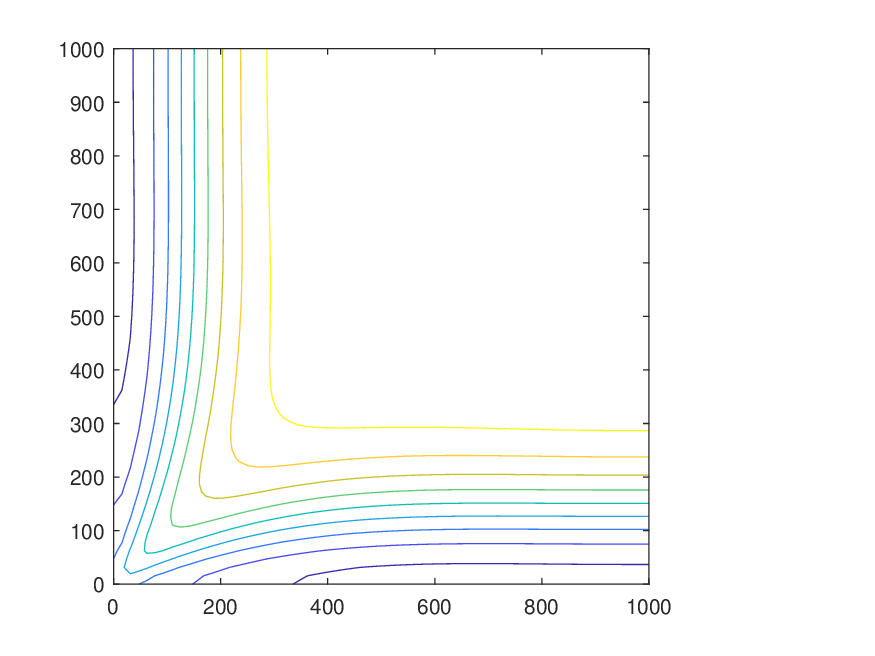}}
		\end{minipage}
		\caption{Surface and contour plots of the concentration in Test 1 at $t=3$ and $t=10$ years for triangular (top) and square (bottom) meshes}\label{fig.test1}
	\end{center}
\end{figure}
\subsubsection*{Test 2} In this test, we take $k(\bx)=80$, $M = 41$ and the molecular diffusion $d_m=0$. The surface and contour plots of the concentration at $t=3$ and $t=10$ years are presented in Figure~\ref{fig.test2}. The viscosity $\mu(c)$ here depends on the concentration $c$ unlike test 1 due to the choice of $M$. This and the difference between the longitudinal and the dispersion coefficients together with the absence of the molecular diffusion imply that the fluid flow is much faster along the diagonal direction, which can be observed from the figure. The results align with those reported in the literature, as demonstrated, for instance, in \cite[Section 6.2]{GDMMiscibleDisplacement}.
	\begin{figure}[h!!]
	\begin{center}
		\begin{minipage}[b]{0.23\linewidth}
			{\includegraphics[width=4.5cm]{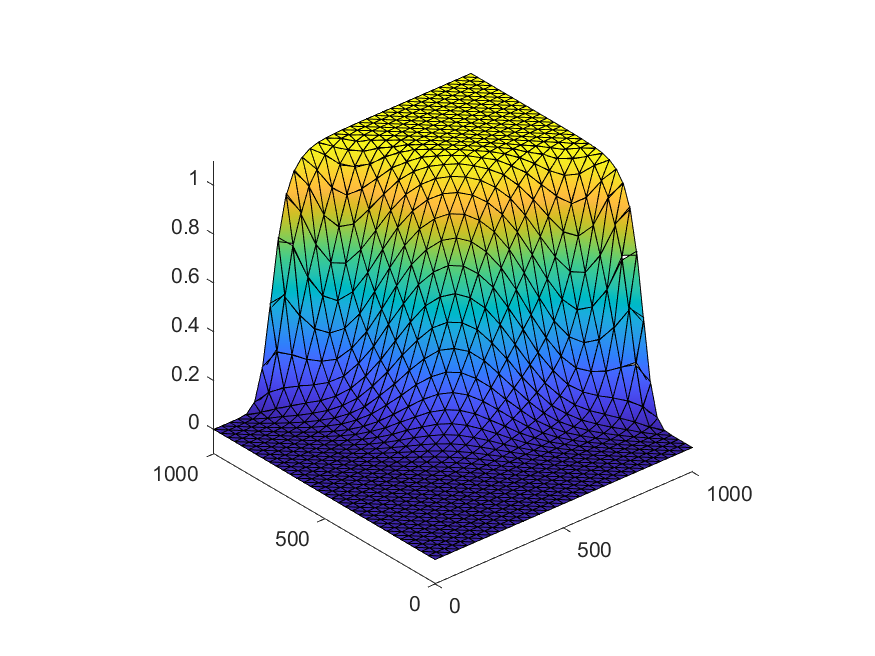}}
		\end{minipage}
		\begin{minipage}[b]{0.2\linewidth}
			{\includegraphics[width=4.2cm]{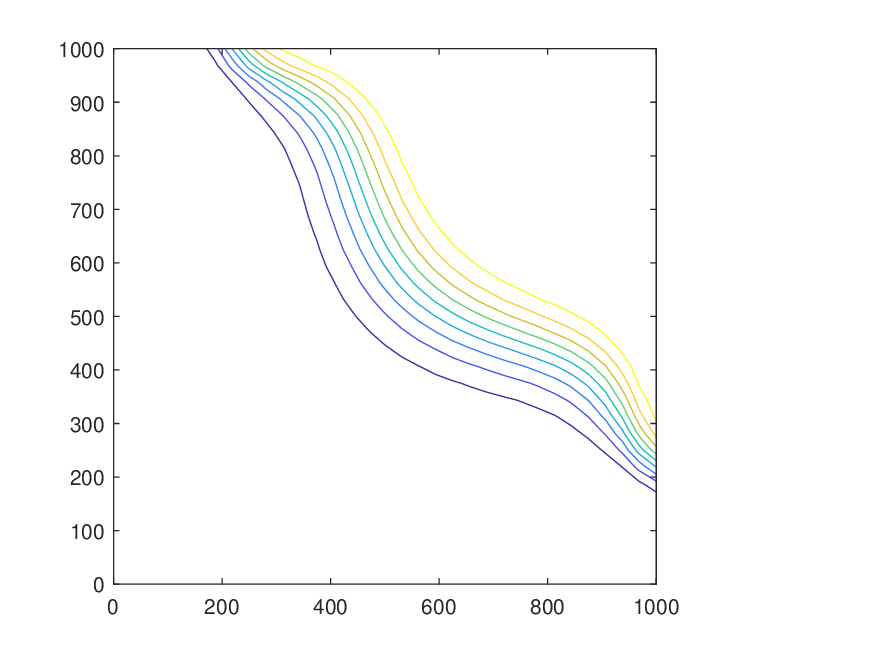}}
		\end{minipage}
		\begin{minipage}[b]{0.23\linewidth}
			{\includegraphics[width=4.5cm]{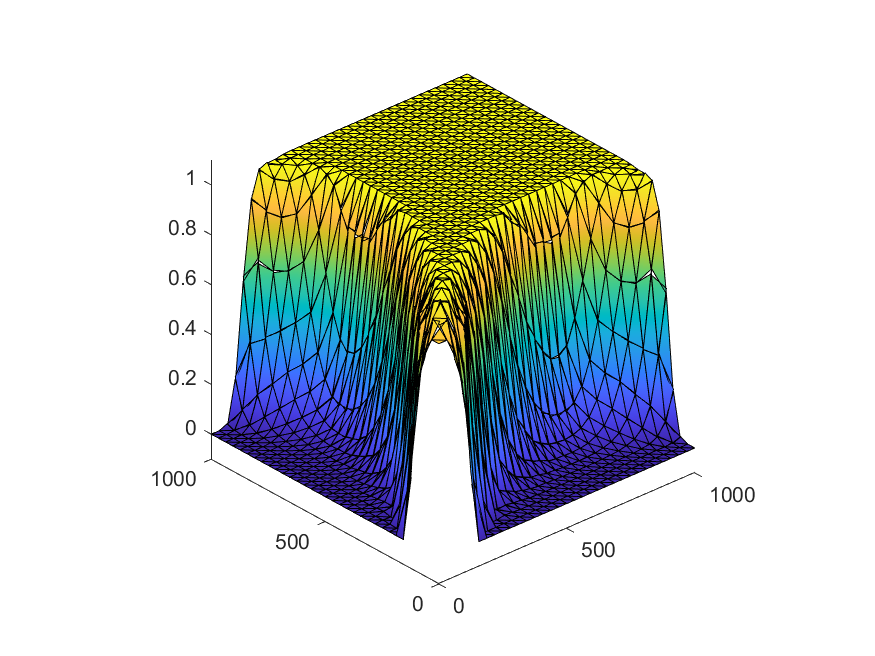}}
		\end{minipage}
		\begin{minipage}[b]{0.23\linewidth}
			{\includegraphics[width=4.2cm]{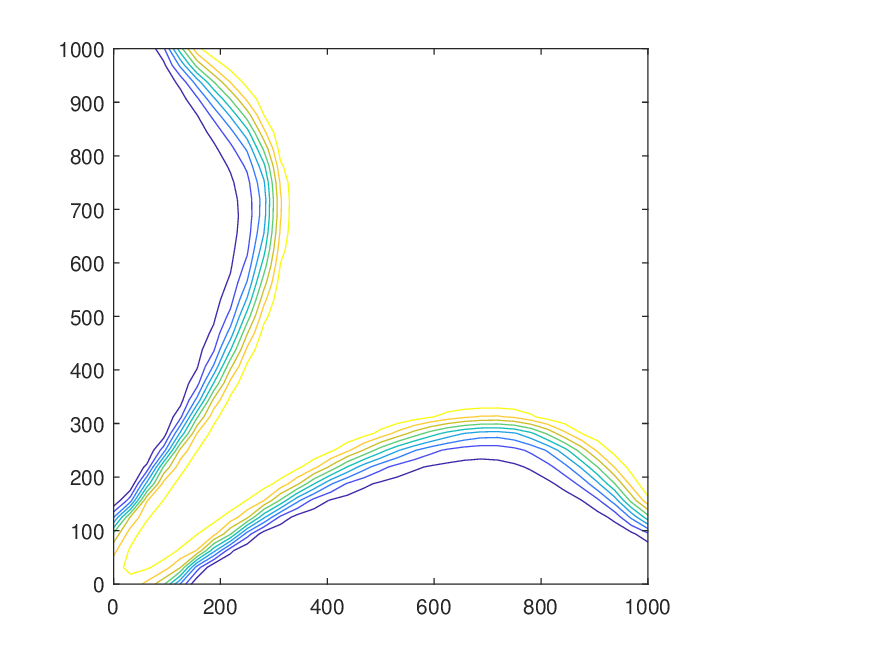}}
		\end{minipage}
		\begin{minipage}[b]{0.23\linewidth}
			{\includegraphics[width=4.5cm]{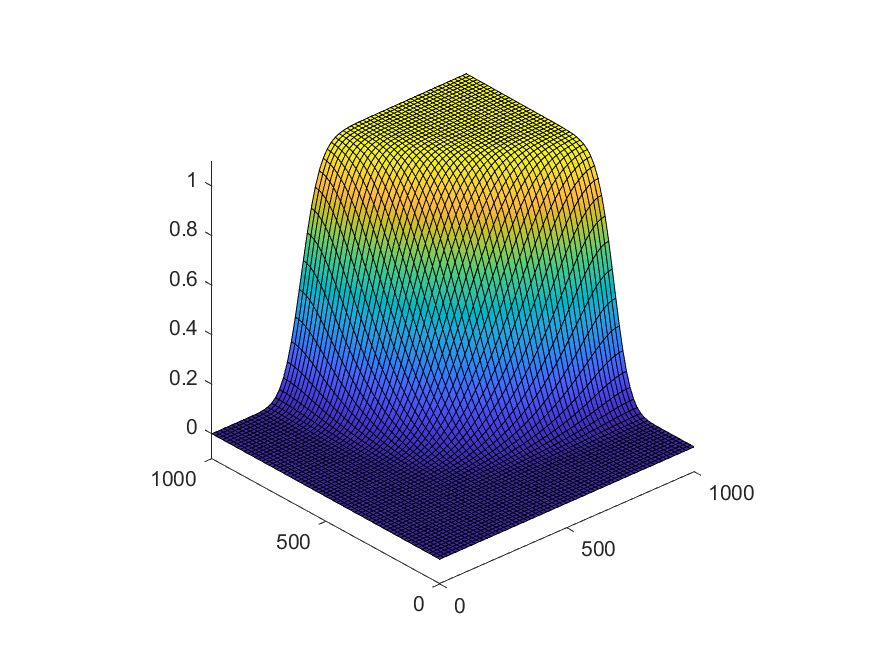}}
		\end{minipage}
		\begin{minipage}[b]{0.2\linewidth}
			{\includegraphics[width=4.2cm]{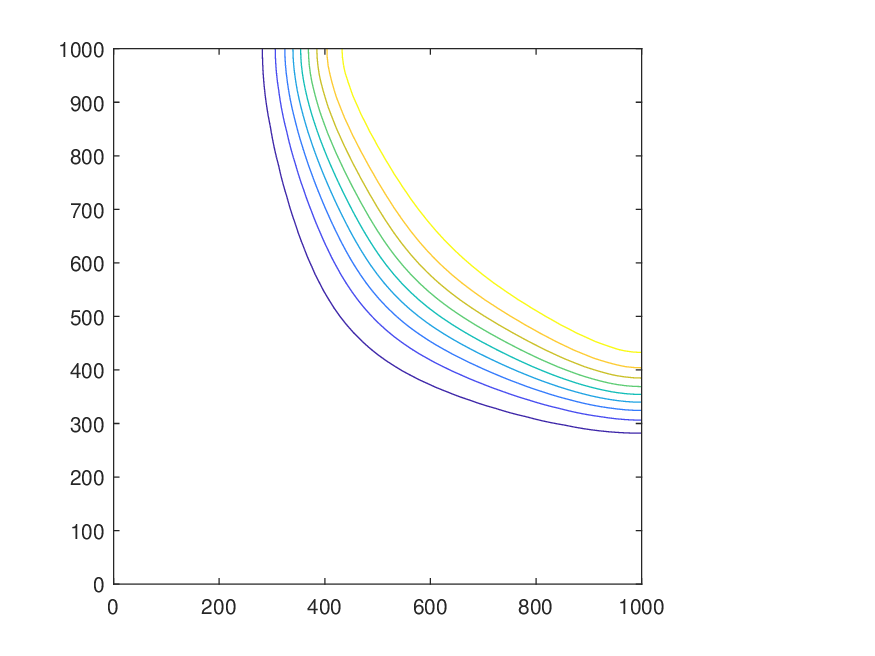}}
		\end{minipage}
		\begin{minipage}[b]{0.23\linewidth}
			{\includegraphics[width=4.5cm]{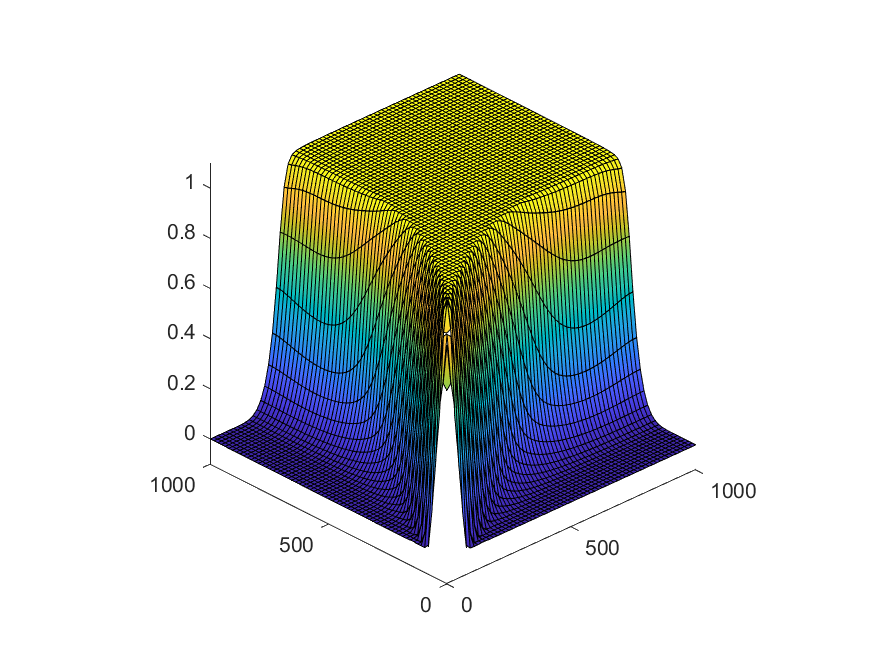}}
		\end{minipage}
		\begin{minipage}[b]{0.23\linewidth}
			{\includegraphics[width=4.2cm]{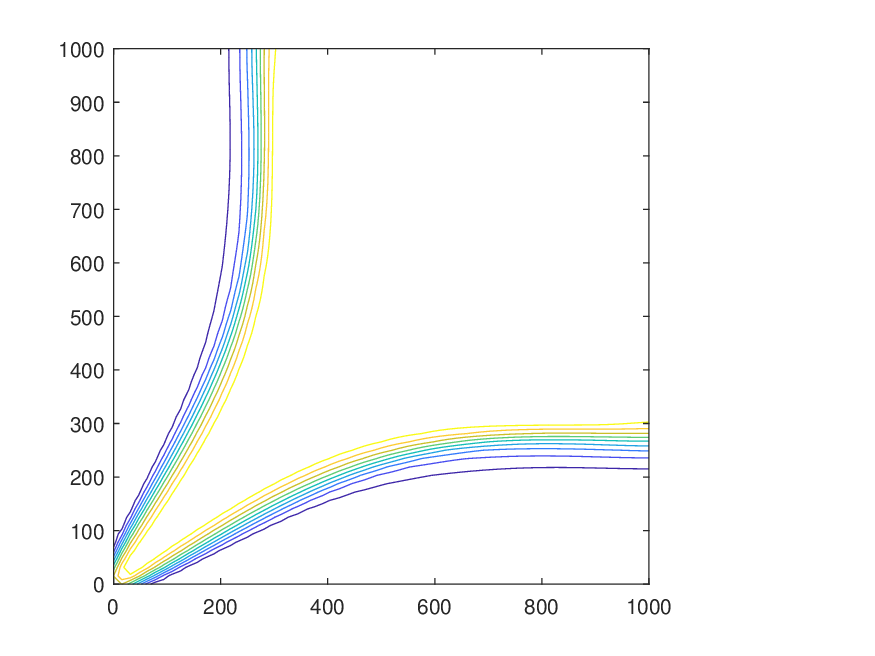}}
		\end{minipage}
		\caption{Surface and contour plots of the concentration in Test 2 at $t=3$ and $t=10$ years for triangular (top) and square (bottom) meshes}\label{fig.test2}
	\end{center}
\end{figure}
\subsubsection*{Test 3} Here, we consider the numerical simulation of a miscible displacement problem with discontinuous permeability, which is commonly encountered in many field applications.. The data is same as given in Test 1 except the permeability
of the medium $k(\bx)$. Let $k(\bx)=80$ on the sub-domain $\Omega_L:=(0,1000) \times (0,500)$ and$k(\bx)=20$ on the sub-domain $\Omega_U:=(0,1000) \times (500,1000)$. The contour and surface plot at $t = 3$ and $t = 10$ years are given in Figure \ref{fig.test3}. 
\begin{figure}[h!!]
	\begin{center}
		\begin{minipage}[b]{0.23\linewidth}
			{\includegraphics[width=4.5cm]{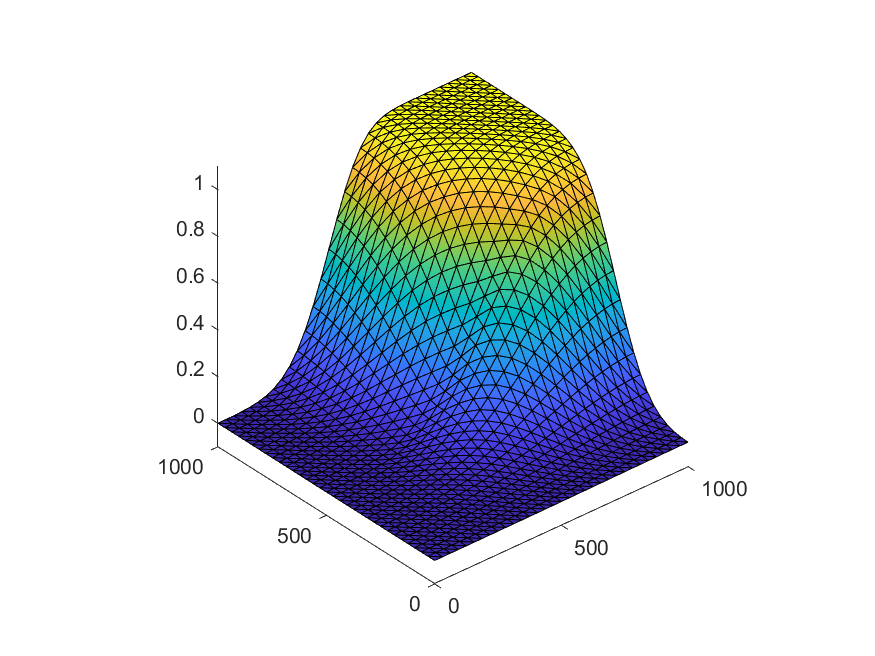}}
		\end{minipage}
		\begin{minipage}[b]{0.2\linewidth}
			{\includegraphics[width=4.2cm]{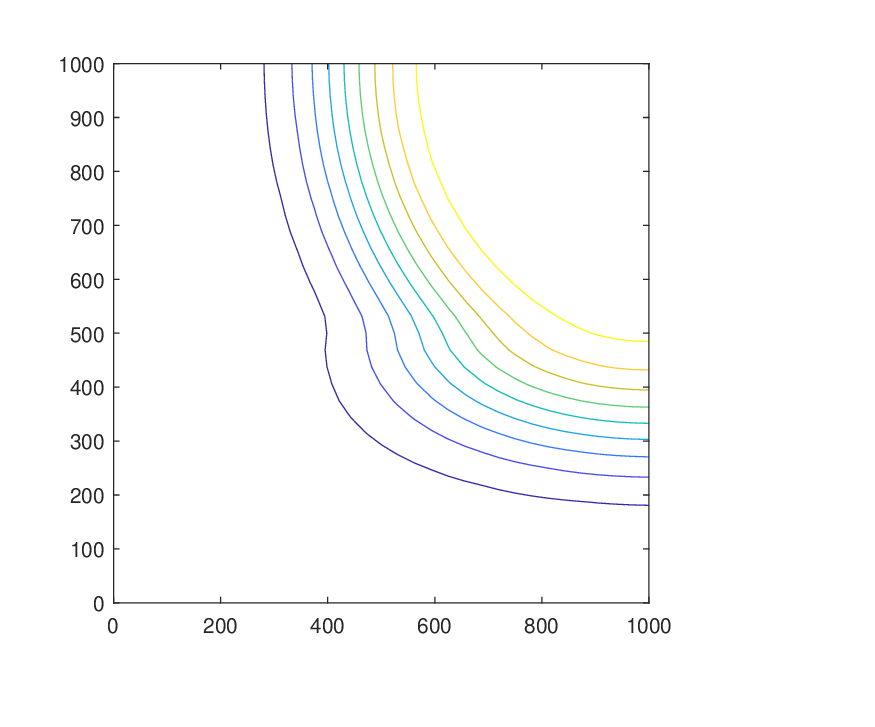}}
		\end{minipage}
		\begin{minipage}[b]{0.23\linewidth}
			{\includegraphics[width=4.5cm]{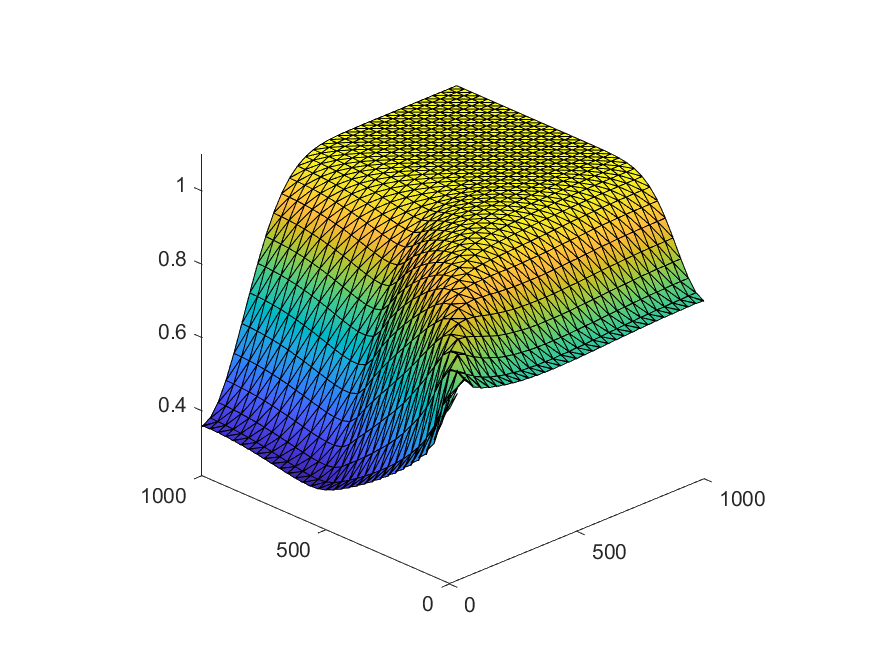}}
		\end{minipage}
		\begin{minipage}[b]{0.23\linewidth}
			{\includegraphics[width=4.2cm]{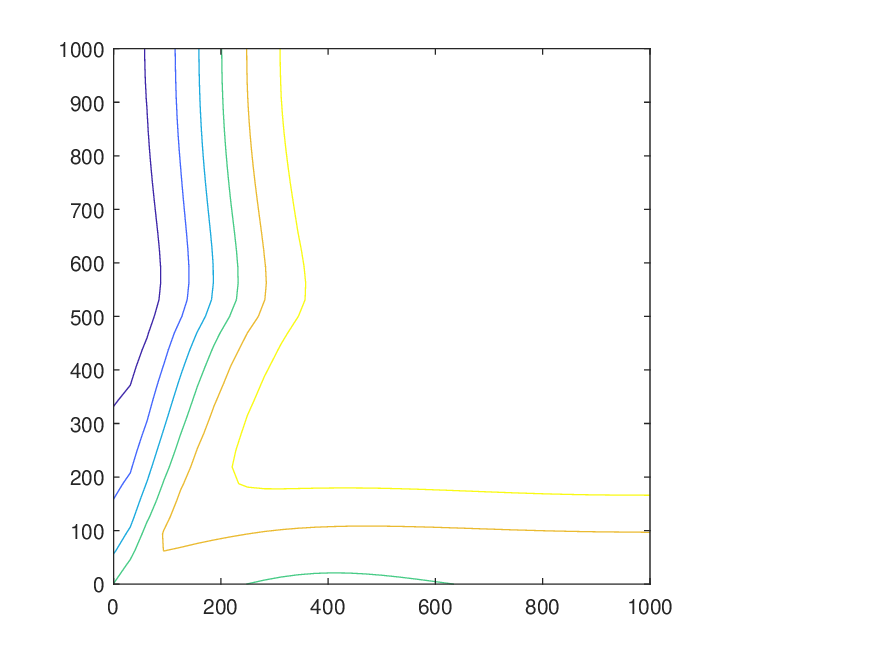}}
		\end{minipage}
		\begin{minipage}[b]{0.23\linewidth}
			{\includegraphics[width=4.5cm]{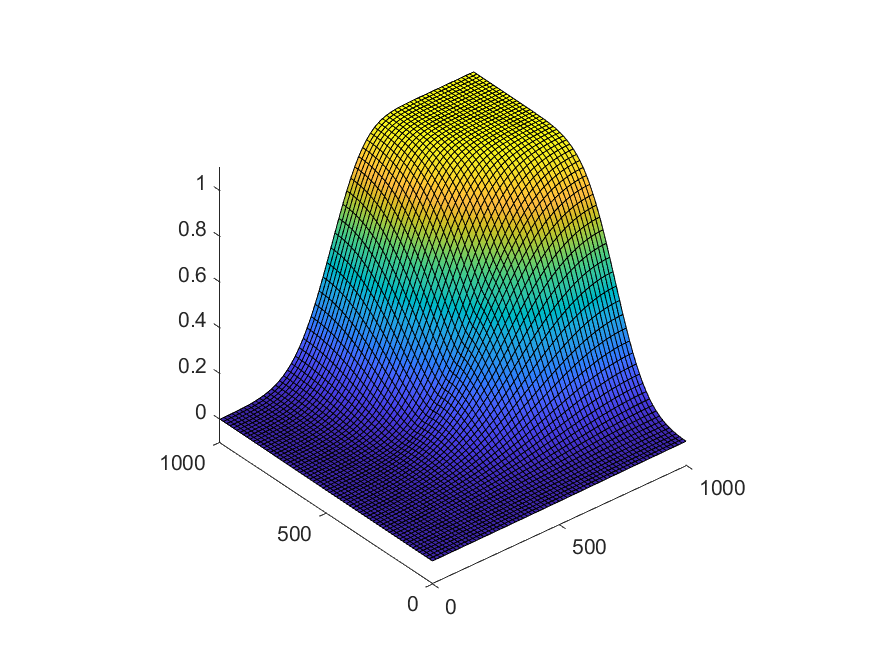}}
		\end{minipage}
		\begin{minipage}[b]{0.2\linewidth}
			{\includegraphics[width=4.2cm]{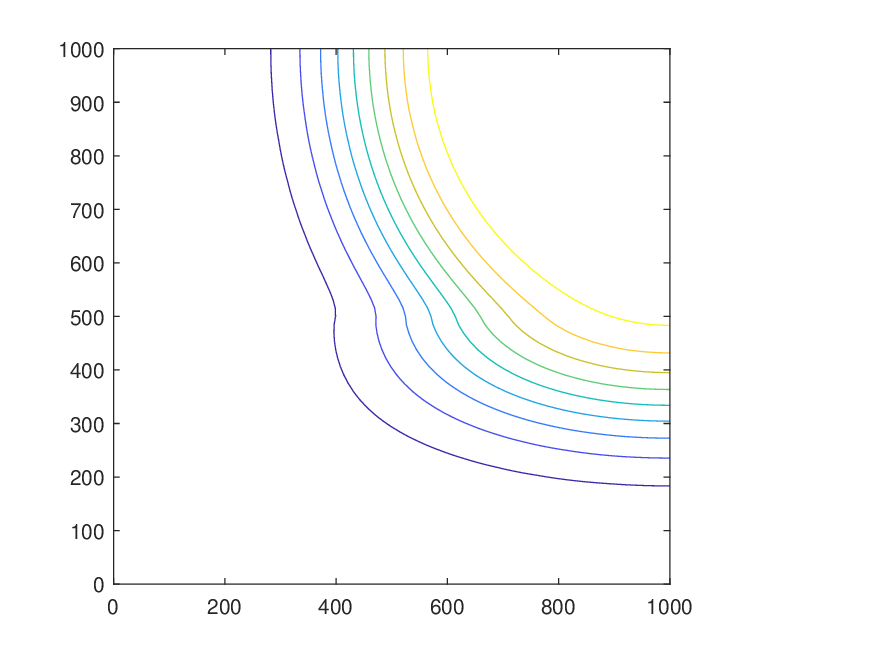}}
		\end{minipage}
		\begin{minipage}[b]{0.23\linewidth}
			{\includegraphics[width=4.5cm]{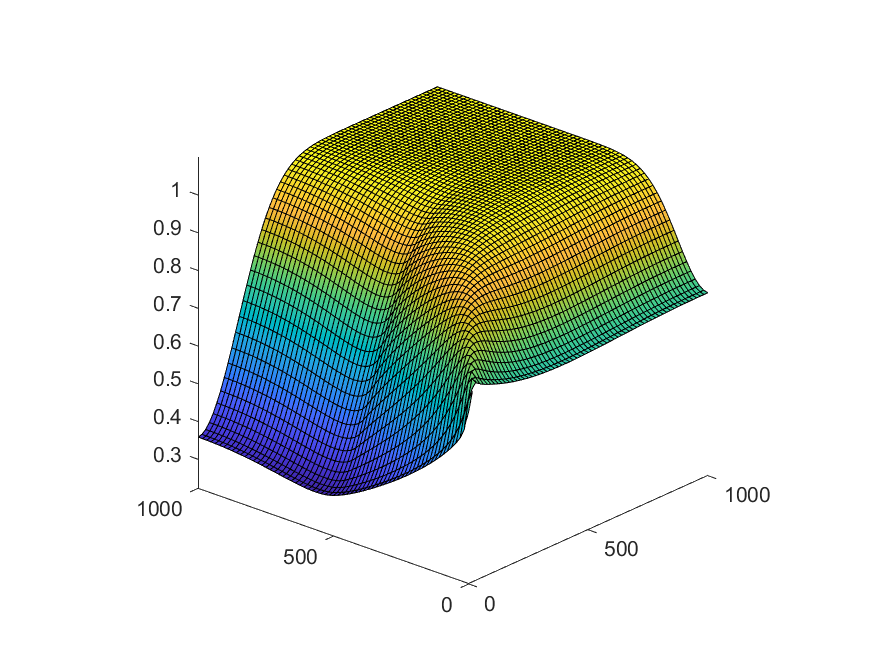}}
		\end{minipage}
		\begin{minipage}[b]{0.23\linewidth}
			{\includegraphics[width=4.2cm]{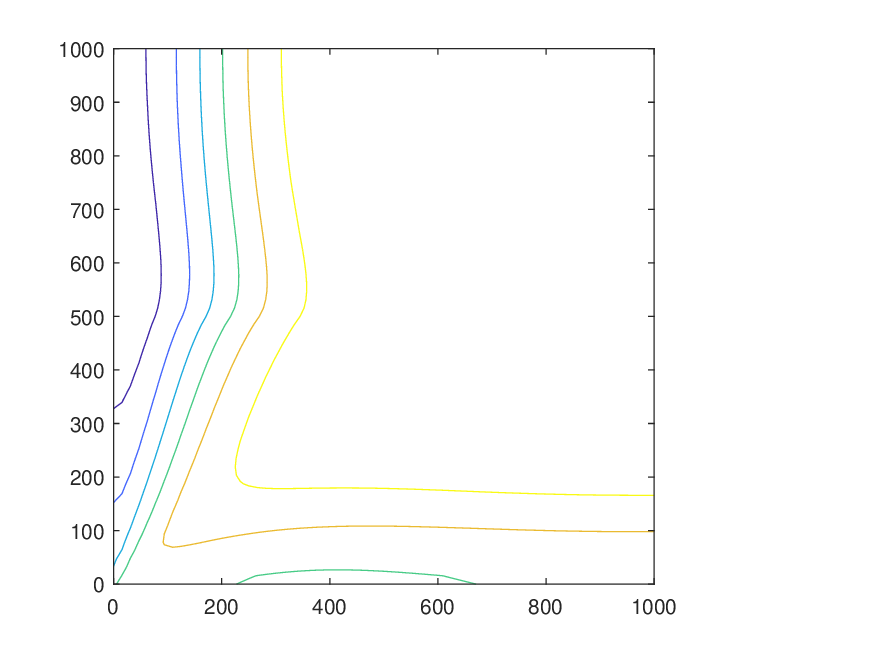}}
		\end{minipage}
		\caption{Surface and contour plots of the concentration in Test 3 at $t=3$ and $t=10$ years for triangular (top) and square (bottom) meshes}\label{fig.test3}
	\end{center}
\end{figure}
\subsubsection*{Test 4} Consider Test 2 with n$k(\bx)=80$ on the sub-domain $\Omega_L$ and$k(\bx)=20$ on the sub-domain $\Omega_U$. The surface and contour plots of the concentration are depicted in Figure~\ref{fig.test4}. As seen in the figure, the concentration front initially moves faster in the vertical direction than in the horizontal direction. This discrepancy arises from the fact that the subdomain $\Omega_L$ possesses a greater permeability, resulting in a higher Darcy velocity than in the subdomain $\Omega_U$. Once the injecting fluid reaches $\Omega_L$, it starts to move much faster in the horizontal direction on $\Omega_L$ than on $\Omega_U$a due to the same reason.
\begin{figure}[h!!]
	\begin{center}
		\begin{minipage}[b]{0.23\linewidth}
			{\includegraphics[width=4.5cm]{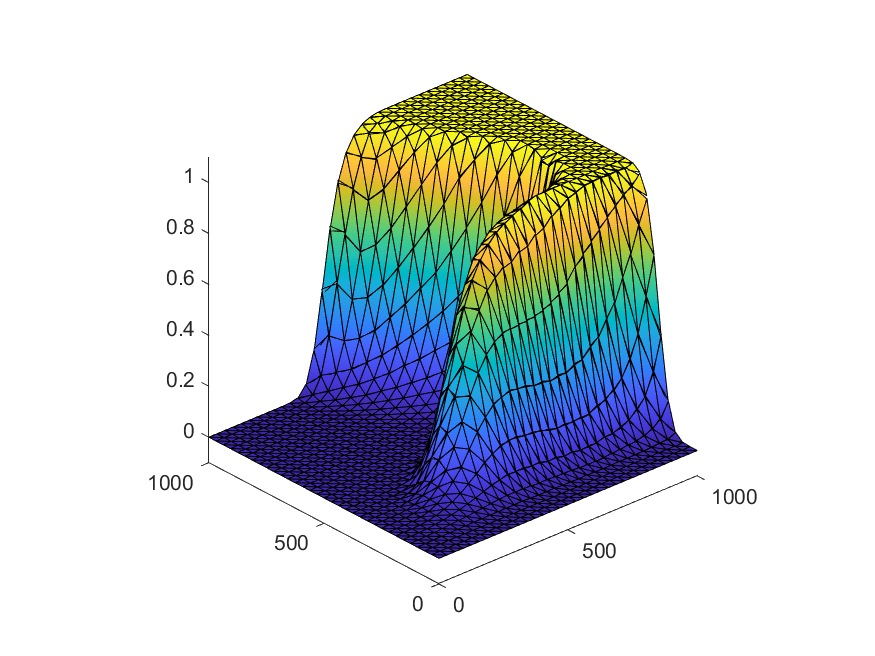}}
		\end{minipage}
		\begin{minipage}[b]{0.2\linewidth}
			{\includegraphics[width=4.2cm]{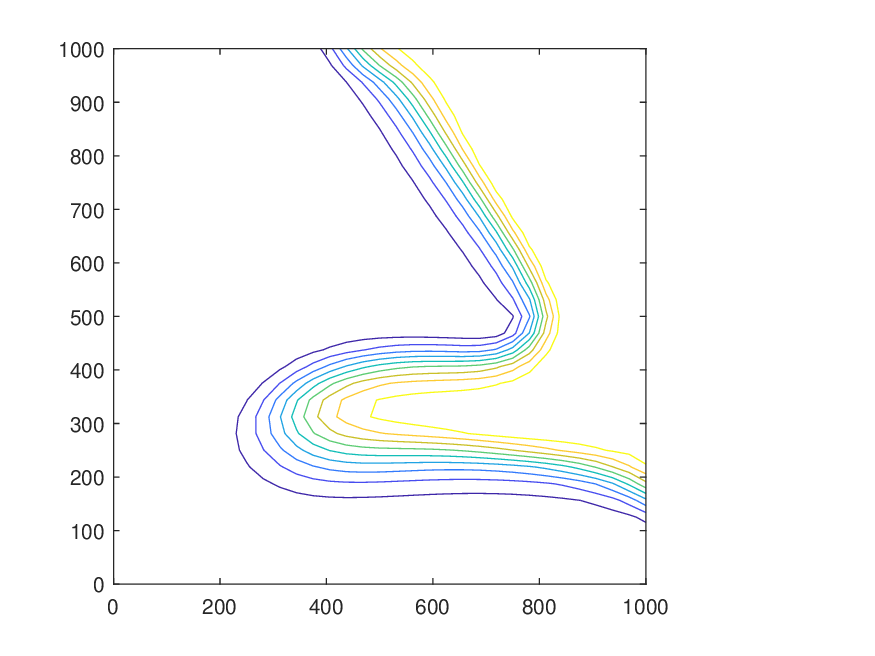}}
		\end{minipage}
		\begin{minipage}[b]{0.23\linewidth}
			{\includegraphics[width=4.5cm]{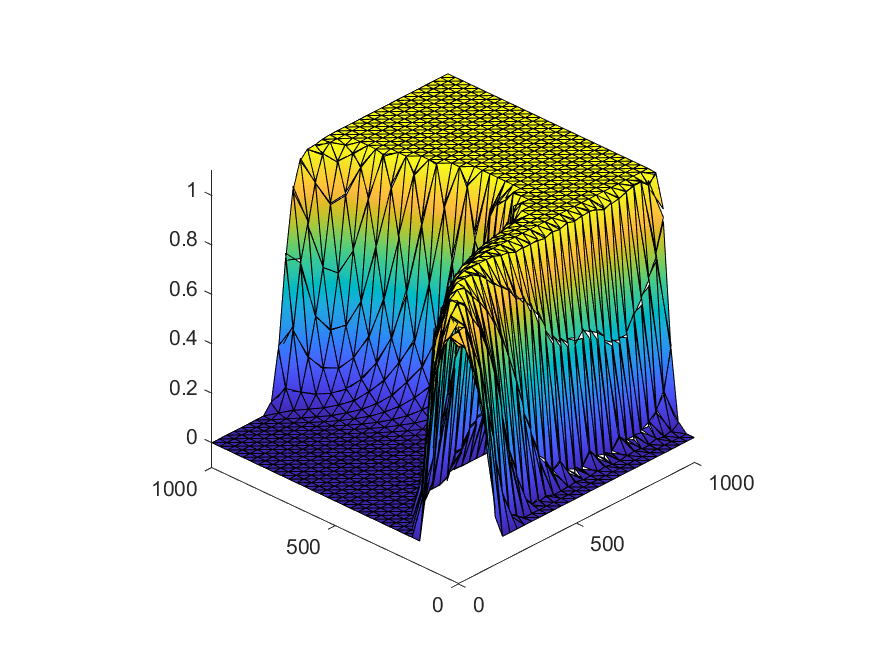}}
		\end{minipage}
		\begin{minipage}[b]{0.23\linewidth}
			{\includegraphics[width=4.2cm]{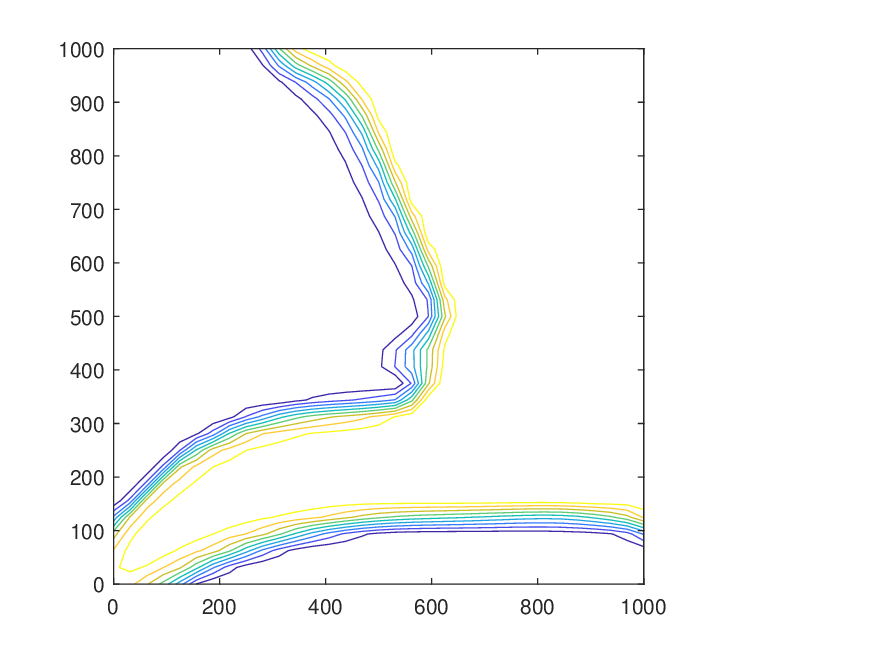}}
		\end{minipage}
		\begin{minipage}[b]{0.23\linewidth}
			{\includegraphics[width=4.5cm]{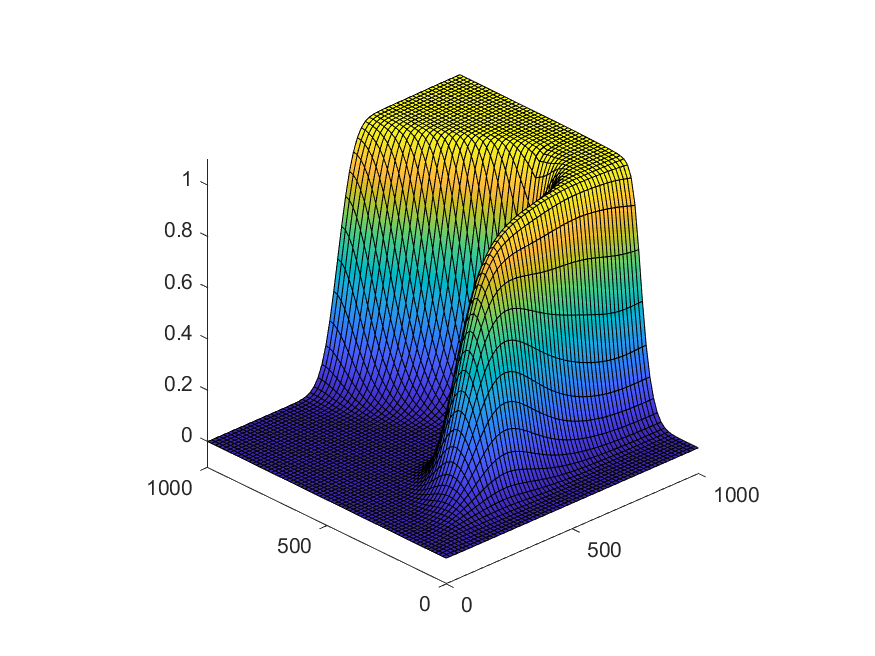}}
		\end{minipage}
		\begin{minipage}[b]{0.2\linewidth}
			{\includegraphics[width=4.2cm]{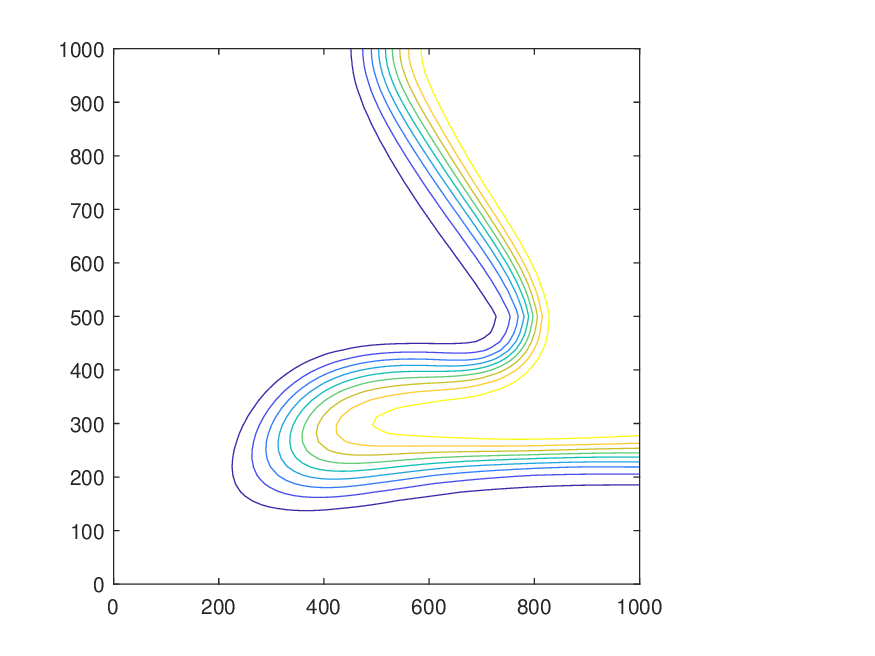}}
		\end{minipage}
		\begin{minipage}[b]{0.23\linewidth}
			{\includegraphics[width=4.5cm]{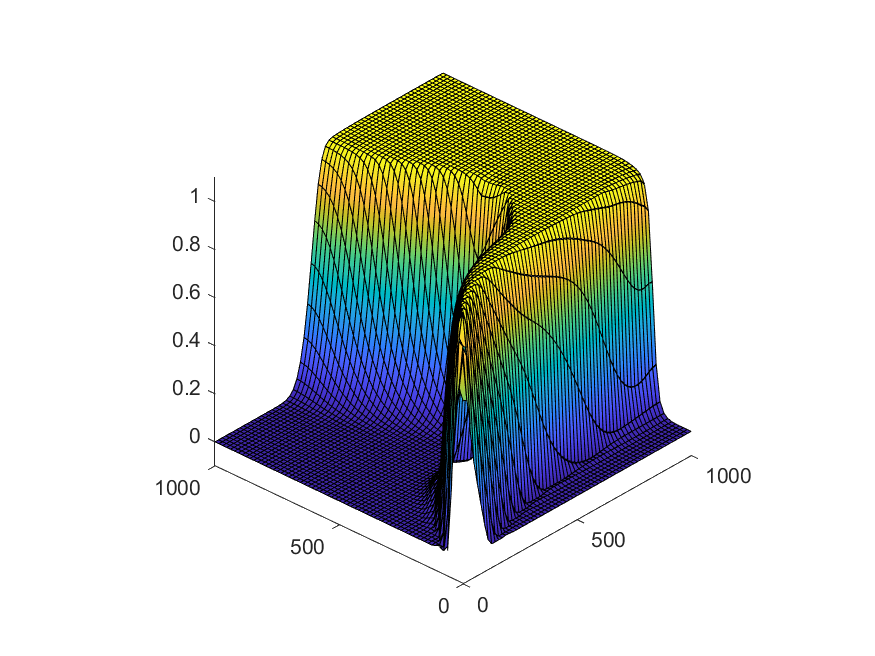}}
		\end{minipage}
		\begin{minipage}[b]{0.23\linewidth}
			{\includegraphics[width=4.2cm]{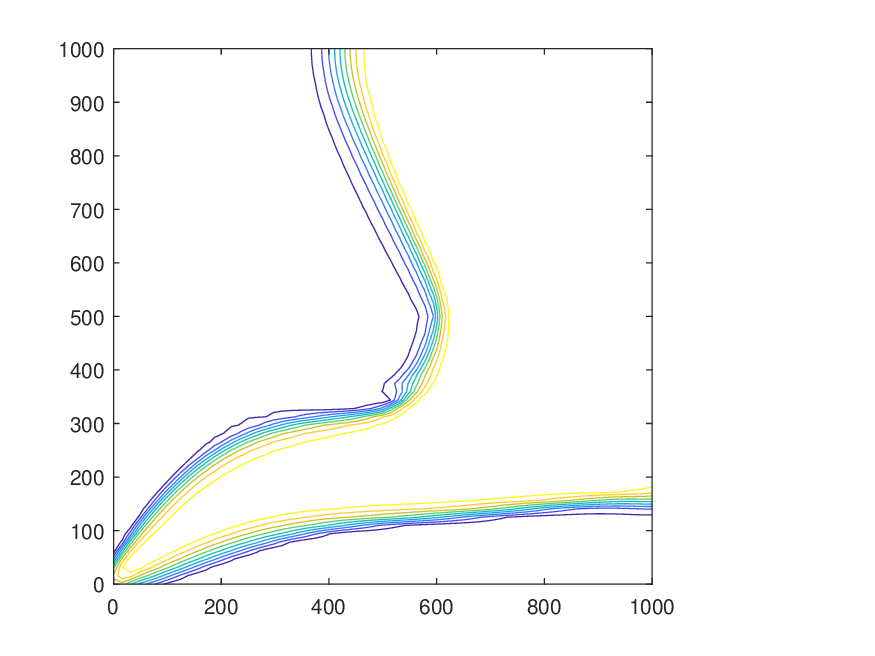}}
		\end{minipage}
		\caption{Surface and contour plots of the concentration in Test 4 at $t=3$ and $t=10$ years for triangular (top) and square (bottom) meshes}\label{fig.test4}
	\end{center}
\end{figure}
\section{Appendix} \label{sec:appendix}
The proof of error estimates for the concentration in Theorem \ref{thm.c} is obtained by modifying the proof of the corresponding results in \cite{Veiga_miscibledisplacement_2021}. For the sake of completeness, we provide here a detailed proof.

\begin{lem}\cite[Lemma 4.4, 4.5]{Veiga_miscibledisplacement_2021}\label{lem.cPcnuuhn}
	Under sufficient smoothness of the continuous data and solution, it holds
	\begin{align}
		(a)&\,\left\|\frac{\partial c^n}{\partial t}-\frac{\P_c c^n-\P_c c^{n-1}}{\tau}\right\|\le \tau^{\half}\left\| \frac{\partial^2 c}{\partial s^2}\right\|_{L^2(t_{n-1},t_n;L^2(\O))}+\tau^{-\half}h^{k+2}\left(\int_{t_{n-1}}^{t_n}\xi_{0,t}^2\ds\right)^{\half},\\
		(b)&\,\|\bu^n-\bu_h^{n-1}\|\lesssim \tau \left\|\frac{\partial \bu}{\partial t}\right\|_{L^\infty(t_{n-1},t_n;L^2(\O))}+\|c^{n-1}-c_h^{n-1}\|+h^{k+1},
	\end{align}
	where $\xi_{0,t}$ is defined in Corollary \ref{cor.ctPcterror}.
\end{lem}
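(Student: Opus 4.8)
The plan is to prove each bound by the standard device of separating the temporal consistency error of the backward Euler quotient from the spatial projection error, and then invoking the already-available estimates, namely Corollary~\ref{cor.ctPcterror} for the time-differentiated projection and Theorem~\ref{thm.up} for the stationary velocity error.

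For $(a)$, I would first split
\begin{align*}
\frac{\partial c^n}{\partial t}-\frac{\P_c c^n-\P_c c^{n-1}}{\tau}
&=\left(\frac{\partial c^n}{\partial t}-\frac{c^n-c^{n-1}}{\tau}\right)
+\frac{1}{\tau}\big[(c^n-\P_c c^n)-(c^{n-1}-\P_c c^{n-1})\big].
\end{align*}
The first bracket is the pure time-discretisation error; the integral form of the Taylor remainder gives $\frac{\partial c^n}{\partial t}-\frac{c^n-c^{n-1}}{\tau}=\frac{1}{\tau}\int_{t_{n-1}}^{t_n}(s-t_{n-1})\,\partial_{ss}c\,\ds$, and bounding $(s-t_{n-1})\le\tau$ inside the integral followed by the Cauchy--Schwarz inequality in time produces exactly the factor $\tau^{\half}\|\partial_{ss}c\|_{L^2(t_{n-1},t_n;L^2(\O))}$. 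For the second bracket I would set $e(t):=(c-\P_c c)(t)$ and write $\frac{e(t_n)-e(t_{n-1})}{\tau}=\frac{1}{\tau}\int_{t_{n-1}}^{t_n}\partial_t e\,\ds$; then $\|\partial_t e\|\le h^{k+2}\xi_{0,t}$ by Corollary~\ref{cor.ctPcterror}$(b)$, and a second Cauchy--Schwarz in time turns the prefactor $\tfrac1\tau\int_{t_{n-1}}^{t_n}$ into $\tau^{-\half}\big(\int_{t_{n-1}}^{t_n}\xi_{0,t}^2\,\ds\big)^{\half}$, yielding the remaining term. Adding the two contributions gives $(a)$.

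For $(b)$, I would split $\bu^n-\bu_h^{n-1}=(\bu^n-\bu^{n-1})+(\bu^{n-1}-\bu_h^{n-1})$. The first difference is handled by the fundamental theorem of calculus, $\|\bu^n-\bu^{n-1}\|\le\int_{t_{n-1}}^{t_n}\|\partial_t\bu\|\,\ds\le\tau\|\partial_t\bu\|_{L^\infty(t_{n-1},t_n;L^2(\O))}$. The second difference is precisely the stationary velocity error at time level $t_{n-1}$, so Theorem~\ref{thm.up} (applied with index $n-1$, using $c_h^{n-1}\in Z_h$) gives $\|\bu^{n-1}-\bu_h^{n-1}\|\lesssim h^{k+1}+\|c^{n-1}-c_h^{n-1}\|$. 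Summing the two estimates yields $(b)$.

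Neither argument hides a genuine obstacle; the only delicate point is the bookkeeping of the time integrals in $(a)$, where one must keep the powers of $\tau$ consistent when pushing the Taylor remainder and the projection-error derivative through Cauchy--Schwarz, and must invoke the \emph{time-differentiated} projection estimate of Corollary~\ref{cor.ctPcterror} rather than the static one of Lemma~\ref{lem.cPcerror}.
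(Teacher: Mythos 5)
Your proposal is correct, and it reconstructs exactly the standard argument that the paper itself delegates to \cite[Lemmas 4.4, 4.5]{Veiga_miscibledisplacement_2021}: for $(a)$, the splitting into the backward-Euler consistency error (Taylor remainder plus Cauchy--Schwarz in time, giving $\tau^{1/2}$) and the time-integrated projection error (Corollary~\ref{cor.ctPcterror}$(b)$ plus Cauchy--Schwarz, giving $\tau^{-1/2}h^{k+2}$), and for $(b)$, the splitting $\bu^n-\bu_h^{n-1}=(\bu^n-\bu^{n-1})+(\bu^{n-1}-\bu_h^{n-1})$ handled by the fundamental theorem of calculus and Theorem~\ref{thm.up} at level $n-1$. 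Both the decompositions and the key estimates you invoke are the intended ones, and your bookkeeping of the powers of $\tau$ is accurate.
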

\begin{lem}\cite[(62)]{Veiga_miscibledisplacement_2021}\label{lem.Pinablainfty}
	Under the mesh assumption (\textbf{D3}), for all $c \in H^{k+2}(\cT_h)\cap W^{1,\infty}(\cT_h)$,
	$$\|\bPi_k^{0,K}\nabla \P_c c\|_{0,\infty,K}\lesssim h_K^{-1}\|\bPi_k^{0,K}\nabla \P_c c\|_{0,K}\lesssim h_K^{-1}\|\nabla \P_c c\|_{0,K}\lesssim 1.$$
\end{lem}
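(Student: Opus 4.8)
The plan is to establish the three inequalities in the displayed chain one at a time, reading from left to right.

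First I would treat the leftmost inequality as a polynomial inverse estimate. Since $\bPi_k^{0,K}\nabla\P_c c$ is a vector-valued polynomial of degree at most $k$ on $K$, the shape-regularity encoded in (\textbf{D1})--(\textbf{D2}) yields the standard inverse estimate $\|\bp\|_{0,\infty,K}\lesssim h_K^{-1}\|\bp\|_{0,K}$ for every $\bp\in(\P_k(K))^2$ in two dimensions (the exponent $-1$ being $-d/2$ with $d=2$), with a constant depending only on $k$ and $\rho_0$. Applying it to $\bp=\bPi_k^{0,K}\nabla\P_c c$ gives the first bound. Next I would dispatch the middle inequality via the $L^2(K)$-stability of the orthogonal projector: because $\bPi_k^{0,K}$ is the $L^2(K)$ projection onto $(\P_k(K))^2$, one has $\|\bPi_k^{0,K}\bpsi\|_{0,K}\le\|\bpsi\|_{0,K}$, and taking $\bpsi=\nabla\P_c c$ and multiplying by $h_K^{-1}$ finishes this step.

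The rightmost inequality is the substantive one and is the sole place where (\textbf{D3}) is needed. I would split, by the triangle inequality,
$$\|\nabla\P_c c\|_{0,K}\le\|\nabla(\P_c c-c)\|_{0,K}+\|\nabla c\|_{0,K},$$
bound the second term using $c\in W^{1,\infty}(\cT_h)$ together with $|K|^{1/2}\lesssim h_K$ to get $\|\nabla c\|_{0,K}\le|K|^{1/2}\|\nabla c\|_{0,\infty,K}\lesssim h_K$, and bound the first term by Lemma~\ref{lem.cPcerror}(a), which furnishes the global estimate $\|\nabla(\P_c c-c)\|_{0,K}\le|c-\P_c c|_{1,\cT_h}\lesssim h^{k+1}$. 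The delicate point is converting this global $h^{k+1}$ bound into an element-wise bound of size $h_K$: assumption (\textbf{D3}) supplies $h\lesssim h_K$, whence $h^{k+1}\lesssim h_K^{k+1}\lesssim h_K$ for all $k\ge0$, since $h_K$ is bounded by the diameter of $\Omega$ so that $h_K^{k}$ is a bounded factor. Combining the two terms gives $\|\nabla\P_c c\|_{0,K}\lesssim h_K$, and dividing by $h_K$ yields the claimed $h_K^{-1}\|\nabla\P_c c\|_{0,K}\lesssim1$.

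I expect the localization in the last step to be the main obstacle. Lemma~\ref{lem.cPcerror}(a) is a bound on the global broken $H^1$ norm summed over all elements, whereas the statement demands a per-element estimate of order $h_K$; it is exactly the quasi-uniformity (\textbf{D3}) that bridges this gap, which explains why the lemma is stated under (\textbf{D3}) rather than merely (\textbf{D1})--(\textbf{D2}). The other two inequalities are routine consequences of shape-regularity (already available from (\textbf{D1})--(\textbf{D2})) and of the orthogonality of the $L^2$ projector, and require no further work.
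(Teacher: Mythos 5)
Your proposal is correct and follows essentially the same route as the paper's source for this result: the paper does not reprove the lemma but cites \cite[(62)]{Veiga_miscibledisplacement_2021}, whose argument is exactly yours — a polynomial inverse estimate under shape regularity, $L^2(K)$-stability of $\bPi_k^{0,K}$, and a triangle inequality combining Lemma~\ref{lem.cPcerror}(a) with $c\in W^{1,\infty}(\cT_h)$, where the quasi-uniformity (\textbf{D3}) is precisely what turns the global $h^{k+1}$ bound into the elementwise bound $\lesssim h_K$. Your identification of that localization step as the only place (\textbf{D3}) is needed is also the correct reading of why the lemma is stated under that assumption.
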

\subsection*{Proof of Theorem \ref{thm.c}}

\noindent Throughout this proof, let $\eta$ denote generic constants, which will take different values at different places but will always be independent of $h$ and $\tau$.
	
	\medskip
	
\noindent The triangle inequality with $\P_c c^n$ and Lemma~\ref{lem.cPcerror}.b show
	\begin{equation}\label{eqn.nu1tri}
		\|c^n-c_h^n\|\le \|c^n-\P_c c^n\|+\|\P_c c^n-c_h^n\|\le \eta h^{k+2}+\|\P_c c^n-c_h^n\|.
	\end{equation}
	Let $c_h^n-\P_c c^n=\nu^n \in Z_h$. The discrete fully formulation \eqref{eqn.fullydiscreteq_zh} and the definition of $\P_c c^n$ in \eqref{eqn.projectionP} provide
	\begin{align}
		\cM_h&\left(\frac{\nu^n-\nu^{n-1}}{\tau},\nu^n\right)+\cD_h(\bu_h^{n-1};\nu^n,\nu^n)\\
		&=\cM_h\left(\frac{c_h^n-c_h^{n-1}}{\tau},\nu^n\right)-\cM_h\left(\frac{\P_c c^n-\P_c c^{n-1}}{\tau},\nu^n\right)+\cD_h(\bu_h^{n-1};\nu^n,\nu^n)\\
		&=	(q^{+n}\hc^{n},\nu^n)_h-\Theta_h(\bu_h^{n-1},c_h^{n};\nu^n)-\cM_h\left(\frac{\P_c c^n-\P_c c^{n-1}}{\tau},\nu^n\right)-\cD_h(\bu_h^{n-1};\P_c c^n,\nu^n)\\
		&=(q^{+n}\hc^{n},\nu^n)_h-\Theta_h(\bu_h^{n-1},c_h^{n};\nu^n)-\cM_h\left(\frac{\P_c c^n-\P_c c^{n-1}}{\tau},\nu^n\right)\\
		&\qquad +\cD_h^{\bu^{n}}(\P_c c^n,\nu^n)-\cD_h(\bu_h^{n-1};\P_c c^n,\nu^n)-\cD_h^{\bu^{n}}(\P_c c^n,\nu^n)\\
		&=(q^{+n}\hc^{n},\nu^n)_h-\Theta_h(\bu_h^{n-1},c_h^{n};\nu^n)-\cM_h\left(\frac{\P_c c^n-\P_c c^{n-1}}{\tau},\nu^n\right)\\
		&\qquad +\cD_h^{\bu^{n}}(\P_c c^n,\nu^n)-\cD_h(\bu_h^{n-1};\P_c c^n,\nu^n)-\cD_{\pw}^{\bu^{n}}(c^n,\nu^n)-\Theta_{\pw}^{\bu^{n}}(c^n,\nu^n)-(c^n,\nu^n)\\
		&\qquad +\cN_h(\bu^n;c^n,\nu^n)+\Theta_h^{\bu^{n}}(\P_c c^n,\nu^n)+(\P_c c^n,\nu^n)_h.	\label{eqn1}
	\end{align}
	An integration by parts yields
	\begin{align}
		-\cD_{\pw}^{\bu^{n}}(c^n,\nu^n)+\sum_{e \in \cE_h}\int_e(D(\bu^n)\nabla c^n \cdot \bn_e )\jump{\nu^n}\ds&=(\divc(D(\bu^n)\nabla c^n),\nu^n)
	\end{align}
	and
	\begin{align}
		-\Theta_{\pw}^{\bu^{n}}(c^n,\nu^n)-\sum_{e \in \cE_h}\int_e\frac{c^n\bu^n \cdot n_e}{2} \jump{\nu^n}\ds&=-(\bu^n\cdot \nabla c^n,\nu^n)-(q^{+n}c^n,\nu^n).
	\end{align}
	Lemma~\ref{lem.propertiesdiscrete}.d reads $\cD_h(\bu_h^{n-1};\nu^n,\nu^n) \ge D_* |\nu^n|_{1,\cT_h}^2$ where $D_*$ is independent of $h$ and $\bu_h^{n-1}$. These estimates in \eqref{eqn1} together with the definition of $\cN_h(\bu^n;c^n,\nu^n)$ and $\divc(D(\bu^n)\nabla c^n)-\bu^n\cdot \nabla c^n-q^{+n}c^n=\phi \frac{\partial c^n}{\partial t}-q^{+n}\hc^n$ from \eqref{eqn.model} show
	\begin{align}
		\cM_h&\left(\frac{\nu^n-\nu^{n-1}}{\tau},\nu^n\right)+D_*|\nu^n|_{1,\cT_h}^2\\
		\le &\left(\cM\left(\phi \frac{\partial c^n}{\partial t},\nu^n\right)-\cM_h\left(\frac{\P_c c^n-\P_c c^{n-1}}{\tau},\nu^n\right)\right)+(\Theta_h^{\bu^{n}}(\P_c c^n,\nu^n)-\Theta_h(\bu_h^{n-1},c_h^{n};\nu^n)) \\
		&\qquad +(\cD_h^{\bu^{n}}(\P_c c^n,\nu^n)-\cD_h(\bu_h^{n-1};\P_c c^n,\nu^n)) +((\P_c c^n,\nu^n)_h-(c^n,\nu^n))\\
		&\qquad+((q^{+n}\hc^{n},\nu^n)_h-(q^{+n}\hc^n,\nu^n))=:A_1+A_2+A_3+A_4+A_5.\label{eqna1a2a3a4a5}
	\end{align}
	The remaining arguments follow from the proof of \cite[Theorem 2]{Veiga_miscibledisplacement_2021}. However, for the sake of completeness, we provide a proof.
	
	\smallskip

	\noindent\textbf{Step 1: estimation of $A_1$.} The definition of $\cM(\bullet,\bullet)$ and $\cM_h(\bullet,\bullet)$ in \eqref{defn.bilinear} and \eqref{defn.Mh}, orthogonality and continuity properties of $\Pi_{k+1}^{0,K}$, Cauchy Schwarz inequality, $S_\cM^K(z_h,\tilde{z}_h)\le M_1^\cM\|z_h\|_{0,K}\|\tilde{z}_h\|_{0,K}$ from \eqref{eqn.SM} prove
	\begin{align}
		A_1
		&=\sum_{K \in \cT_h}\bigg[\left(\phi \frac{\partial c^n}{\partial t},\nu^n\right)_{0,K}-\left(\phi \Pi_{k+1}^{0,K}\left(\frac{\P_c c^n-\P_c c^{n-1}}{\tau}\right),\Pi_{k+1}^{0,K}\nu^n\right)_{0,K}\\
		&\qquad - \nu_\cM^K(\phi) S_\cM^K\left((I-\Pi_{k+1}^{0,K})\left(\frac{\P_c c^n-\P_c c^{n-1}}{\tau}\right),(I-\Pi_{k+1}^{0,K})\nu^n\right)\bigg]\\
		&=\sum_{K \in \cT_h}\bigg[\left(\phi \frac{\partial c^n}{\partial t},\nu^n\right)_{0,K}-\left(\Pi_{k+1}^{0,K}\left(\phi \Pi_{k+1}^{0,K}\left(\frac{\P_c c^n-\P_c c^{n-1}}{\tau}\right)\right),\nu^n\right)_{0,K}\\
		&\qquad - \nu_\cM^K(\phi) S_\cM^K\left((I-\Pi_{k+1}^{0,K})\left(\frac{\P_c c^n-\P_c c^{n-1}}{\tau}\right),(I-\Pi_{k+1}^{0})\nu^n\right)\bigg]\\
		&\le \eta\bigg[\left\|\phi \frac{\partial c^n}{\partial t}-\Pi_{k+1}^{0}\left(\phi \Pi_{k+1}^{0}\left(\frac{\P_c c^n-\P_c c^{n-1}}{\tau}\right)\right)\right\|\| \nu^n\|\\
		&\qquad + \left\|(I-\Pi_{k+1}^{0})\left(\frac{\P_c c^n-\P_c c^{n-1}}{\tau}\right)\right\|\|\nu^n\|\bigg]=:\eta(A_{1,1}+A_{1,2})\|\nu^n\|.\label{eqna1}
	\end{align}
	The continuity of the $L^2$ projector $\Pi_{k+1}^{0}$, boundedness of $\phi$, and Lemma \ref{lem.approx} read
	\begin{align}
		A_{1,1}&\le \left\|(I-\Pi_{k+1}^{0})\phi \frac{\partial c^n}{\partial t}\right\|+\left\|\Pi_{k+1}^{0}\left(\phi \frac{\partial c^n}{\partial t}-\phi\Pi_{k+1}^{0} \frac{\partial c^n}{\partial t}\right)\right\| \\
		&\qquad +\left\|\Pi_{k+1}^{0}\left(\phi \Pi_{k+1}^{0}\left(\frac{\partial c^n}{\partial t}-\frac{\P_c c^n-\P_c c^{n-1}}{\tau}\right)\right)\right\|\\
		&\le \eta \bigg[h^{k+2}\left(\left|\phi \frac{\partial c^n}{\partial t}\right|_{k+2,\cT_h}+\left|\frac{\partial c^n}{\partial t}\right|_{k+2,\cT_h}\right)+\left\| \frac{\partial c^n}{\partial t}-\frac{\P_c c^n-\P_c c^{n-1}}{\tau}\right\|\bigg].\label{eqna11}
	\end{align} 
	Analogous arguments provides
	\begin{align}
		A_{2,1}&= \left\|(I-\Pi_{k+1}^{0})\left(\frac{\P_c c^n-\P_c c^{n-1}}{\tau}-\frac{\partial c^n}{\partial t}\right)\right\|+\left\|(I-\Pi_{k+1}^{0})\frac{\partial c^n}{\partial t}\right\|\\
		&\le \eta\bigg[ \left\|\frac{\P_c c^n-\P_c c^{n-1}}{\tau}-\frac{\partial c^n}{\partial t} \right\|+h^{k+2}\left|\frac{\partial c^n}{\partial t}\right|_{k+2,\cT_h}\bigg].
	\end{align}
	This and \eqref{eqna11} in \eqref{eqna1} together with Lemma \ref{lem.cPcnuuhn}.a result in
	\begin{align}
		A_1&\le \eta \bigg(h^{k+2}+\tau^{-\half}h^{k+2}\left(\int_{t_{n-1}}^{t_n}\xi_{0,t}^2\ds\right)^{\half}+\tau^\half\left\|\frac{\partial^2 c}{\partial s^2}\right\|_{L^2(t_{n-1},t_n;L^2(\O))}\bigg)\|\nu^n\|.\label{eqna1new}
	\end{align}
	\noindent\textbf{Step 2: estimation of $A_2$.} The definition of $\Theta_h^{\bu^{n}}(\bullet,\bullet)$ and $\Theta_h(\bu_h^{n-1},\bullet;\bullet)$ in \eqref{eqn.Thetahu}, and \eqref{eqn.Thetah} and $((q^{+n}+q^{-n})\nu^n,\nu^n)_h \ge 0$ show
	\begin{align}
		A_2
		&=\half[\sum_{K\in \cT_h}(\bu^n\cdot \bPi_{k}^{0,K}(\nabla \P_cc^n),\Pi_{k+1}^{0,K}\nu^n)_{0,K}-(\bu_h^{n-1}\cdot\nabla c_h^n,\nu^n)_h-
		((q^{+n}+q^{-n})\nu^n,\nu^n)_h\\
		&\qquad -\sum_{K\in \cT_h}(\bu^n,\Pi_{k+1}^{0,K} \P_c c^n\cdot\bPi_{k}^{0,K}(\nabla \nu^n))_{0,K}+(\bu_h^{n-1}, c_h^n\nabla \nu^n)_h]\\
		&\le \half[\sum_{K\in \cT_h}(\bu^n\cdot \bPi_{k}^{0,K}(\nabla \P_cc^n),\Pi_{k+1}^{0,K}\nu^n)_{0,K}-(\bu_h^{n-1}\cdot\nabla c_h^n,\nu^n)_h\\
		&\qquad -\sum_{K\in \cT_h}(\bu^n,\Pi_{k+1}^{0,K} \P_c c^n\cdot\bPi_{k}^{0,K}(\nabla \nu^n))_{0,K}+(\bu_h^{n-1}, c_h^n\nabla \nu^n)_h]. \label{eqna2}
	\end{align}
	Since $c_h^n-\P_c c^n=\nu^n$,
	\begin{align}
		0&=(\bu_h^{n-1}\cdot\nabla \nu^n,\nu^n)_h-(\bu_h^{n-1}\cdot\nabla \nu^n,\nu^n)_h\\
		&=(\bu_h^{n-1}\cdot\nabla c_h^n,\nu^n)_h-(\bu_h^{n-1}\cdot\nabla \P_c c^n,\nu^n)_h-(\bu_h^{n-1}\cdot\nabla \nu^n,c_h^n)_h+(\bu_h^{n-1}\cdot\nabla \nu^n,\P_c c^n)_h.
	\end{align}
	Hence, \eqref{eqna2} becomes
	\begin{align}
		A_2&\le  \half[\sum_{K\in \cT_h}(\bu^n\cdot \bPi_{k}^{0,K}(\nabla \P_cc^n),\Pi_{k+1}^{0,K}\nu^n)_{0,K}-(\bu_h^{n-1}\cdot\nabla \P_c c^n,\nu^n)_h\\
		&\qquad +(\bu_h^{n-1}, \P_c c^n\nabla \nu^n)_h-\sum_{K\in \cT_h}(\bu^n,\Pi_{k+1}^{0,K} \P_c c^n\cdot\bPi_{k}^{0,K}(\nabla \nu^n))_{0,K}]\\
		&= \half[\sum_{K\in \cT_h}((\bu^n-\bPi_{k}^{0,K}\bu^n)\cdot \bPi_{k}^{0,K}(\nabla \P_cc^n),\Pi_{k+1}^{0,K}\nu^n)_{0,K}+((\bu^n-\bu_h^{n-1})\cdot\nabla \P_c c^n,\nu^n)_h\\
		&\qquad -(\bu^n-\bu_h^{n-1}, \P_c c^n\nabla \nu^n)_h-\sum_{K\in \cT_h}(\bu^n-\bPi_{k}^{0,K}\bu^n,\Pi_{k+1}^{0,K} \P_c c^n\cdot\bPi_{k}^{0,K}(\nabla \nu^n))_{0,K}].
	\end{align}
	A triangle inequality with $\P_c c^{n-1}$ and Lemma~\ref{lem.cPcerror}.b lead to  $\|c^{n-1}-c_h^{n-1}\|\le \eta h^{k+2}+\|\nu^{n-1}\|$. Hence, Lemma~\ref{lem.cPcnuuhn}.b reads \begin{equation}\label{eqn.unuhn1}
		\|\bu^n-\bu_h^{n-1}\|\le \eta( \tau +h^{k+1}+\|\nu^{n-1}\|).
	\end{equation}
	This, a generalised \Holder inequality, Lemma~\ref{lem.Pinablainfty}, the approximation property of Lemma~\ref{lem.approx}, and the continuity of projection operator imply
	\begin{equation}
		A_2\le \eta(\tau +h^{k+1}+\|\nu^{n-1}\|)(\|\nu^n\|+|\nu^n|_{1,\cT_h}).\label{eqna2new}
	\end{equation}
	\noindent\textbf{Step 3: estimation of $A_3$.} A simple manipulation leads to 
		\begin{align}
		A_3
		&=\sum_{K \in \cT_h}(( (D(\bu^n)-D(\bPi_k^{0,K}\bu^n)) \bPi_{k}^{0,K}(\nabla \P_c c^n),\bPi_{k}^{0,K}(\nabla \nu^n))_{0,K}\\
		&\qquad -( (D(\bPi_k^{0,K}\bu^n) -D(\bPi_k^{0,K}\bu_h^{n-1}))\bPi_{k}^{0,K}(\nabla \P_c c^n),\bPi_{k}^{0,K}(\nabla \nu^n))_{0,K})\\
		&\qquad + (\nu_\cD^K(\bu^n)-\nu_\cD^K(\bu_h^{n-1}))S_\cD^K((I-\Pi_{k+1}^{\nabla,K})\P_c c^n,(I-\Pi_{k+1}^{\nabla,K})\nu^n).
	\end{align}
	The generalised \Holder inequality, Lipschitz continuity of $\cD(\bullet,\bullet,\bullet)$, Lemma~\ref{lem.approx}, Lemma~\ref{lem.Pinablainfty}, continuity of the projection operator, the definition of $\nu_\cD^K(\bullet)$ and the stability property of $S_\cD^K(\bullet,\bullet)$ show $A_3 \le \eta (h^{k+1}+\|\bu^n-\bu_h^{n-1}\|)|\nu^n|_{\cT_h}.$
	Consequently, \eqref{eqn.unuhn1} provides
	\begin{align}
		A_3 \le \eta( \tau +h^{k+1}+\|\nu^{n-1}\|)|\nu^n|_{1,\cT_h}.
	\end{align}
	\noindent\textbf{Step 4: estimation of $A_4$ and $A_5$.} An introduction of $\Pi_{k+1}^0c^n$, Lemma~\ref{lem.cPcerror}.b, and Lemma~\ref{lem.approx} show
	\begin{align}
		A_4&=(\Pi_{k+1}^0(\P_c c^n-c^n),\nu^n)_h-((I-\Pi_{k+1}^0)c^n,\nu^n)\le \eta h^{k+2}\|\nu^n\|.\label{eqna4}
	\end{align}
	Lemma~\ref{lem.approx} yields
	\begin{align}
		A_5&=((I-\Pi_{k+1}^{0})q^{+n}\hc^{n},\nu^n)_h \le \eta h^{k+2}|q^{+n}\hc^{n}|_{k+2,\cT_h}\|\nu^n\|=\eta h^{k+2}\|\nu^n\|.\label{eqna5}
	\end{align}
\noindent	\textbf{Step 5: conclusion.} A combination of the estimates in $A_1-A_5$ in \eqref{eqn.a1a2a3a4a5} leads to
	\begin{align}
		\cM_h\left(\frac{\nu^n-\nu^{n-1}}{\tau},\nu^n\right)+	\frac{\|\nu^n\|^2}{\tau}+D_*|\nu^n|_{1,\cT_h}^2
		&\le \|\nu^n\|(\omega_3^n+\|\nu^{n-1}\|\omega_1^n)\\
		&\qquad + |\nu^n|_{1,\cT_h}(\omega_4^n+\|\nu^{n-1}\|\omega_2^n)\label{eqnmh}
	\end{align}
	where
	\[\omega_i^n\le \eta,\,i=1,2,\quad\omega_3^n\le \eta(h^{k+1}+\tau^{-\half}h^{k+2}R_1+\tau^{\half}R_2+\tau), \quad \omega_4^n \le \eta(\tau +h^{k+1})\]
	wirh $$R_1^2:=\int_{t_{n-1}}^{t_n}\xi_{0,t}^2\ds,\,\quad R_2:=\|\frac{\partial^2 c}{\partial s^2}\|_{L^2(t_{n-1},t_n;L^2(\O))}.$$
	Define the discrete norm, for all $w_h \in Z_h$,
	\[\|w_h\|_{0,h}:=\cM_h(w_h,w_h).\]
	Then, Lemma~\ref{lem.propertiesdiscrete}.b implies that there exists positive constants $c_*$ and $c^*$ independent of $h$ such that
	\begin{equation}\label{eqn.normh}
		c_*\|w_h\|_{0,h}\le \|w_h\|\le c^*\|w_h\|_{0,h}.
	\end{equation}
	Therefore, \eqref{eqnmh} results in
	\begin{align}
		\|\nu^n\|_{0,h}^2+\tau D_*|\nu^n|_{1,\cT_h}^2
		&\le	\cM_h(\nu^{n-1},\nu^n)+\tau\|\nu^n\|_{0,h}(c^*\omega_3^n+(c^{*})^{2}\omega_1^n\|\nu^{n-1}\|_{0,h})\\
		&\qquad +\tau |\nu^n|_{1,\cT_h}(\omega_4^n+c^*\omega_2^n\|\nu^{n-1}\|_{0,h})=:T_1+T_2+T_3.\label{eqn.t1t2t3}
	\end{align}
	The scaling property of $\cM_h(\bullet,\bullet)$, the definition of $\|\bullet\|_{0,h}$, and an application of Young's inequality show
	\begin{align}
		T_1+T_2&\le \|\nu^n\|_{0,h}((1+\tau \eta)\|\nu^{n-1}\|_{0,h}+\tau c^*\omega_3^n)\\
		& \le \half(\|\nu^n\|_{0,h}^2+((1+\tau \eta)\|\nu^{n-1}\|_{0,h}+\tau c^*\omega_3^n)^2).
	\end{align}
	Young's inequality provides
	\begin{align*}
		T_3 &\le \tau D_*|\nu^n|_{1,\cT_h}^2+\frac{\tau}{4D_*}(\omega_4^n+\eta\|\nu^{n-1}\|_{0,h})^2\le \tau D_*|\nu^n|_{1,\cT_h}^2+\frac{\tau}{2}\eta((\omega_4^{n})^2+\|\nu^{n-1}\|_{0,h}^2).
	\end{align*}
	A substitution of $T_1$-$T_3$ in \eqref{eqn.t1t2t3} leads to
	\begin{equation}
		\|\nu^n\|_{0,h}^2 \le ((1+\tau \eta)\|\nu^{n-1}\|_{0,h}+\tau c^*\omega_3^n)^2+\tau\eta((\omega_4^{n})^2+\|\nu^{n-1}\|_{0,h}^2).\label{eqnnun}
	\end{equation}
	An application of Youngs inequality implies
	\begin{align}
		((1+\tau \eta)\|\nu^{n-1}\|_{0,h}+\tau c^*\omega_3^n)^2&=	((1+\tau \eta)^2\|\nu^{n-1}\|_{0,h}^2+2\tau^\half\|\nu^{n-1}\|_{0,h}	\tau^\half(1+\tau \eta)\tau c^*\omega_3^n +(\tau c^*\omega_3^n)^2\\
		&\le ((1+\tau \eta)^2+\tau)\|\nu^{n-1}\|_{0,h}^2+(\tau(1+\tau \eta)^2+\tau^2)(c^*\omega_3^n)^2\\
		&\le (1+\tau \eta)\|\nu^{n-1}\|_{0,h}^2+\tau \eta(\omega_3^n)^2.
	\end{align}
	This in \eqref{eqnnun} yields
	\begin{equation}
		\|\nu^n\|_{0,h}^2\le (1+\tau \eta)\|\nu^{n-1}\|_{.0,h}^2+\tau \eta[(\omega_3^n)^2+(\omega_4^n)^2]. 
	\end{equation}
	Hence, recursive process and the equivalence relation in \eqref{eqn.normh} prove
	\begin{equation}
		\|\nu^n\|_{}^2\le (1+\tau \eta)\|\nu^{0}\|_{}^2+\tau \eta \sum_{j=1}^n \gamma_j,\label{eqn.nunnu0}
	\end{equation}
	where \[\gamma_j:=(\omega_3^j)^2+(\omega_4^j)^2 \,\mbox{ and  }\, n\le T/\tau.\]
	A triangle inequality with $c^0$ and Lemma \ref{lem.cPcerror}.b leads to
	\begin{equation}
		\|\nu^{0}\|_{}=\|c_{0,h}-\P_c c^0\|_{} \le \|c_{0,h}-c^0\|_{}+\eta h^{k+2}.\label{eqn.nu0}
	\end{equation}
	The definition of $\gamma^j$, $\omega_3^j$, and $\omega_4^j$ shows
	\begin{align}
		\tau \eta \sum_{j=1}^n \gamma_j&\le \eta \sum_{j=1}^n (\tau(\omega_3^j)^2+\tau(\omega_4^j)^2)\\
		&\le \eta \sum_{j=1}^n \left(\tau\left(h^{k+1}+\tau^{-\half}h^{k+2}\left(\int_{t_{j-1}}^{t_j}\xi_{0,t}^2\ds\right)^\half+\tau^{\half}\left\|\frac{\partial^2 c}{\partial s^2}\right\|_{L^2(t_{j-1},t_j;L^2(\O))}+\tau\right)^2+\tau(\tau +h^{k+1})^2\right)\\
		&\le \eta\left(\sum_{j=1}^n\tau(\tau +h^{k+1})^2+(h^{k+2})^2\sum_{j=1}^n\int_{t_{j-1}}^{t_j}\xi_{0,t}^2\ds+\tau^2\sum_{j=1}^n\left\|\frac{\partial^2 c}{\partial s^2}\right\|_{L^2(t_{j-1},t_j;L^2(\O))}^2\right)\\
		&\le \eta\left((\tau +h^{k+1})^2+(h^{k+2})^2\int_{0}^{t_n}\xi_{0,t}^2\ds+\tau^2\int_{0}^{t_n}\left\|\frac{\partial^2 c}{\partial s^2}\right\|_{}^2\ds\right)\label{eqn.gammajbound}
	\end{align}
	with the relation $\sum_{j=1}^n \tau \le T$in the last step. A combination of \eqref{eqn.gammajbound} and \eqref{eqn.nu0} in \eqref{eqn.nunnu0} leads to
	\[	\|\nu^n\|_{}\le \eta (\|c_{0,h}-c^0\|+h^{k+1}+\tau).\]
	This and \eqref{eqn.nu1tri} concludes the proof.\qed

\medskip

\noindent {\bf{Acknowledgements.}} The first author thanks the Department of Science and Technology (DST-SERB), India, for supporting this work through the core research grant CRG/2021/002410.  The second author thanks Indian Institute of Space Science and Technology (IIST) for the financial support towards the research work.

\subsection*{Declarations}

\noindent {\bf Conflict of Interest.} The authors declare that they have no conflict of interest.
\bibliographystyle{amsplain}
\bibliography{VEMBib}
\end{document}